\theoremstyle{plain}
\newtheorem{theorem} {Theorem} [section]
\newtheorem{lemma} [theorem]{Lemma}
\newtheorem{proposition}[theorem]{Proposition}
\newtheorem{corollary} [theorem]{Corollary}
\theoremstyle{definition}
\newtheorem{remark}[theorem]{Remark}
\numberwithin{equation}{section}
\title[$L^2$-Dolbeault Cohomology isomorphism]{Cohomology isomorphism of symmetric power of cotangent bundle of ball quotient and its toroidal compactification}
\date\today
\author{Seungjae Lee and Aeryeong Seo}
\address{Center for Complex Geometry, Institute for Basic Science, 55,
Expo-ro, Yuseong-gu, Daejeon, 34126, South Korea}
\email{seungjae@ibs.re.kr}
\address{Department of Mathematics,
Kyungpook National University,
Daegu 41566, South Korea}%
\email{aeryeong.seo@knu.ac.kr}
\subjclass[2010]{Primary 32L10, 32W05, 53C55,
Secondary 32Q05, 32A36.}%
\keywords{Complex hyperbolic space form with finite volume, Symmetric differential, $L^2$-Dolbeault cohomology, Toroidal compactification, $L^2$ holomorphic function}
\begin{document}
\maketitle
\begin{abstract}
In this paper, we investigate the $L^2$-Dolbeault cohomology of the symmetric power of cotangent bundles of ball quotients with finite volume, as well as their toroidal compactification. Through the application of Hodge theory for complete hermitian manifolds, we establish the existence of Hodge decomposition and Green's operator. 
Moreover, we extend the results
by Adachi \cite{A21} and Lee--Seo \cite{LS23-2} from compact complex hyperbolic spaces to complex hyperbolic spaces with finite volume.
\end{abstract}

\section{Introduction}
For a complex manifold $X$, let $S^mT_X^*$ denote the $m$-th symmetric power of the holomorphic cotangent bundle of $X$. Our main theorem in this paper is
\begin{theorem}\label{main}
Let $\mathbb B^n$ be the unit ball in $\mathbb C^n$.
Let $\Gamma$ be a torsion-free lattice of $\text{Aut}(\mathbb{B}^n)$ with only unipotent parabolic automorphisms. Let $\Sigma = \mathbb{B}^n/ \Gamma$ be a quotient of $\mathbb B^n$ with finite volume and $\overline \Sigma$ be its toroidal compactification.
Then for each $m,r,s \in \mathbb{N} \cup \{0 \}$ there exists a holomorphic vector bundle $E_{r,m}$ over $\overline \Sigma$ such that if $r=0,n$ or $m \geq n-1$, then
\begin{equation*}
H^{r,s}_{L^2, \bar \partial} ( \Sigma, S^m T_{\Sigma}^*)\cong   
H^{s} \big( \overline{\Sigma}, E_{r,m}  \big)
\end{equation*}
where $H^{r,s}_{L^2, \bar \partial} ( \Sigma, S^m T_{\Sigma}^*)$ is the $L^2$-Dolbeault cohomology group of $S^mT_\Sigma^*$ over $\Sigma$ with respect to the metric induced from the Bergman metric on $\mathbb B^n$.

\end{theorem}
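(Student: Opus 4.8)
\noindent\emph{Proof strategy.} The plan is to realize $H^{r,s}_{L^2,\bar\partial}(\Sigma, S^m T_\Sigma^*)$ as the cohomology on the \emph{compact} manifold $\overline\Sigma$ of a coherent subsheaf $\mathcal E_{r,m}\subset j_*\bigl(\Omega^r_\Sigma\otimes S^m T_\Sigma^*\bigr)$, where $j\colon\Sigma\hookrightarrow\overline\Sigma$ is the inclusion, defined as the sheaf of holomorphic sections that are locally square-integrable near the boundary divisor $D=\overline\Sigma\setminus\Sigma$ with respect to the Bergman metric $\omega_B$; the bundle $E_{r,m}$ of the statement will be this $\mathcal E_{r,m}$, which I claim is locally free precisely when $r\in\{0,n\}$ or $m\ge n-1$. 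Concretely the argument splits into (a) a Hodge-theoretic part producing finite dimensionality and harmonic representatives, (b) an explicit local analysis near the cusps, (c) a sheaf-theoretic assembly into a fine resolution, and (d) the local-freeness computation, which is the main obstacle.

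First I would record the structure near $D$. Because $\Gamma$ is torsion-free and all of its parabolic elements are unipotent, $D$ is a disjoint union of \emph{smooth} divisors, each uniformized (on a punctured neighborhood) by a Siegel-domain presentation of $\mathbb B^n$; in suitable coordinates $(z_1,\dots,z_{n-1},z_n)$ on a polydisc $P\subset\overline\Sigma$ with $D\cap P=\{z_n=0\}$ and $P\cap\Sigma=P\setminus D$, the Bergman metric is quasi-isometric to
\[
\omega_B \;\asymp\; \frac{i\,dz_n\wedge d\bar z_n}{|z_n|^{2}\,(\log|z_n|^{2})^{2}} \;+\; \frac{1}{\log(1/|z_n|^{2})}\sum_{j=1}^{n-1} i\,dz_j\wedge d\bar z_j,
\]
together with controlled cross terms coming from the varying abelian-variety fibers. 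From this I compute the induced metric on $\Omega^r_\Sigma\otimes S^m T_\Sigma^*$ on the standard frame $\{dz_I\otimes dz^\alpha\}_{|I|=r,\,|\alpha|=m}$ and the volume form $dV_{\omega_B}$; for each frame element this pins down the largest pole order in $z_n$ compatible with local $L^2$-integrability, and hence yields a first, ``diagonal'' description of $\mathcal E_{r,m}|_P$ as a direct sum $\bigoplus_{I,\alpha} z_n^{N_{I,\alpha}}\mathcal O_P\cdot(dz_I\otimes dz^\alpha)$.

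Next I would prove a local $L^2$-Dolbeault lemma on $P\setminus D$: every locally $L^2$, $\bar\partial$-closed $(r,s)$-form with $s\ge 1$ is, near $D$, $\bar\partial$ of a locally $L^2$ $(r,s-1)$-form. This I would obtain by Fourier expansion in the angular variable of $z_n$ and in the abelian-variety directions, reducing to one-variable weighted $\bar\partial$-equations on $\Delta^*$ with weights $|z_n|^{2a}(\log 1/|z_n|^{2})^{b}$ that are controlled by Hörmander-type estimates on the punctured disc, followed by a routine patching argument; on the compact part of $\Sigma$ this is just the usual Dolbeault lemma. Consequently the complex $(\mathcal L^{r,\bullet},\bar\partial)$ of sheaves on $\overline\Sigma$ of forms that are locally $L^2$ with locally $L^2$ $\bar\partial$ is a fine resolution of $\mathcal E_{r,m}$. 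Since each cusp neighborhood has finite $\omega_B$-volume and $\overline\Sigma$ is compact, ``locally $L^2$ near $D$'' globalizes to ``$L^2$ on $\Sigma$'', so the complex of global sections of $\mathcal L^{r,\bullet}$ coincides with the global $L^2$-$\bar\partial$-complex of $S^m T_\Sigma^*$; combining this with the Hodge decomposition and Green's operator for the complete hermitian manifold $(\Sigma,\omega_B)$ established earlier—which supply finite dimensionality, harmonic representatives, and the coincidence of reduced and unreduced $L^2$-cohomology—gives
\[
H^{r,s}_{L^2,\bar\partial}(\Sigma, S^m T_\Sigma^*)\;\cong\;H^{s}\bigl(\overline\Sigma,\mathcal E_{r,m}\bigr).
\]

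The hard part will be the last step: showing that $\mathcal E_{r,m}$ is a holomorphic vector bundle precisely in the range $r\in\{0,n\}$ or $m\ge n-1$, and that the isomorphism above is then the asserted one with $E_{r,m}:=\mathcal E_{r,m}$. The ``diagonal'' description above is automatically free, but the off-diagonal terms of $\omega_B$ near $D$ may make a linear combination of frame components strictly more integrable than any single component, enlarging $\mathcal E_{r,m}$ and potentially destroying local freeness; one must show that in the stated range these cross-term interactions are harmless—so that a suitably corrected frame $\{z_n^{N_{I,\alpha}}(dz_I\otimes dz^\alpha)+(\text{lower order})\}$ is an honest local frame of $\mathcal E_{r,m}$—whereas outside that range the stalk quotients jump and $\mathcal E_{r,m}$ is only torsion-free. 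I expect this combinatorial/estimate analysis, together with the careful justification of the quasi-isometry type of $\omega_B$ near the cusps (the cross terms and the variation of the abelian-variety fibers), to be the technically heaviest ingredients; once local freeness is in hand, gluing the local frames produces the holomorphic vector bundle $E_{r,m}$ on $\overline\Sigma$ and completes the proof.
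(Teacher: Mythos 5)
Your overall architecture --- realizing the $L^2$-cohomology as the cohomology on $\overline\Sigma$ of the subsheaf of $j_*\bigl(\Omega^r_\Sigma\otimes S^mT^*_\Sigma\bigr)$ of locally square-integrable holomorphic sections, via a fine resolution by sheaves of locally $L^2$ forms with locally $L^2$ $\bar\partial$ --- is the same as the paper's, and your Laurent-coefficient computation of admissible pole orders is exactly the content of Lemma~\ref{exactness for (r,s) 0}. But there is a genuine misdiagnosis of where the hypothesis $r\in\{0,n\}$ or $m\ge n-1$ enters. The sheaf $\mathcal E_{r,m}$ of locally $L^2$ holomorphic sections is locally free for \emph{all} $r,m$: the cross terms of $\omega_{\Omega_b^{(N)}}$ are lower order relative to the quasi-isometric model $\widetilde\omega$ (Lemma~\ref{quasi-isometric}), so the diagonal pole-order count already determines the stalks and $E_{r,m}$ is the explicit direct sum of $[\ell D_j]$-twists written down before Lemma~\ref{exactness for (r,s) 0} --- consistent with the theorem asserting that the bundle exists for every $r,m$. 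The restriction is needed instead for the exactness of your resolution in degree one, i.e.\ the local surjectivity of $\bar\partial\colon L^{r,0}_{2,F}\to\ker\bar\partial_{(r,1),F}$ near a cusp (Proposition~\ref{exactness for (r,s) 2} and Corollary~\ref{exactness 1}); the local-freeness analysis you flag as the main obstacle is not where the difficulty lies.

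More seriously, the step you call routine --- solving the local weighted $\bar\partial$-equation near $D$ by ``H\"ormander-type estimates on the punctured disc'' after Fourier expansion --- is precisely the technical heart of the paper, and it does not follow from the standard H\"ormander estimate: $S^mT^*_\Sigma\otimes K_\Sigma^{-1}$ is not Nakano semi-positive for small $m$, and the weights forced by the $L^2$ condition on $S^mT^*_\Sigma$ couple the $w'$ and $w_n$ directions through $\|w\|=e^{\frac{2\pi}{\tau}|w'|^2}|w_n|$, so no clean one-variable reduction is available. The paper needs two separate devices: for $r=0,n$, a vanishing theorem (Proposition~\ref{vanishing by kahler potential}, Corollary~\ref{basic estimate by Kahler potential 2}) exploiting a K\"ahler potential with uniformly bounded gradient together with the Nakano positivity of $T^*_\Sigma$; and for $1\le r\le n-1$, where that positivity is unavailable, a Donnelly--Fefferman/Berndtsson--Charpentier-type estimate \`a la Chen with the singular weights $(-\log\|w\|)^{s}\|w\|^{2\ell}e^{-\tau}$ (Lemma~\ref{local L2 estimate}), which is exactly where $m\ge n-1$ is used. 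Until you supply an argument of comparable strength for this degree-one solvability, the proof is incomplete. A minor further point: you invoke the Hodge decomposition and Green operator as inputs, but in the paper these are consequences of the main theorem via finite-dimensionality; the sheaf-to-$L^2$ comparison needs only the fineness of the $L^2$ sheaves and the completeness of $\omega$, so you should drop that dependence to avoid circularity.
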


The requirement in Theorem \ref{main} that $\Gamma$ has only unipotent parabolic automorphisms is given for the smoothness of its toroidal compactification. In fact, every torsion-free lattice of $\textup{Aut}(\mathbb{B}^n)$ has a finite index subgroup having only unipotent parabolic automorphisms. Hence, any complex hyperbolic space form with finite volume has a finite covering which has a smooth toroidal compactification.

In \cite[Theorem 3.1]{HLWY22}, the authors established a result for a compact K\"ahler manifold $X$, a line bundle $L$ over $X$, and its simple normal crossing divisor $D$. They showed that there exists a hermitian metric $h^L$ on $X-D$ such that the sheaf cohomology $H^s(X, \Omega^r(\log D)\otimes \mathcal O(L))$ is isomorphic to the $L^2$-Dolbeault cohomology $H^{r,s}_{L^2, \bar\partial}(X-D, L, \omega_P, h^L_{X-D})$, provided that $\omega_P$ is a K\"ahler metric of the Poincar\'e type on $X-D$. 
It is worth mentioning that the induced metric $\omega$ on $\Sigma$ from the Bergman metric on $\mathbb B^n$ is not a Poincar\'e type in its toroidal compactification. 
This difference requires a more complex analysis in our paper to prove Theorem~\ref{main}.

The key ingredient in the proof of Theorem \ref{main} is to establish an $L^2$-version of Dolbeault-Grothendieck lemma for $S^m T_{\Sigma}^*$ using the induced metric $\omega$. 
One of the main difficulties in establishing this lemma is that the bundle $S^m T_{\Sigma}^* \otimes K_{\Sigma}^{-1}$ is not Nakano semi-positive for small $m$. If it were, one could apply the Bochner-Kodaira-Nakano inequality (for example, \cite[Chapter VIII (4.2)]{Dem}) to $\Lambda^{(r,0)} T^*_{\Sigma} \otimes S^m T_{\Sigma}^* \otimes K_{\Sigma}^{-1}$-valued $(n,s)$-forms.
To overcome this difficulty, { when $r=0$ or $n$}, we utilize a locally quasi-isometric metric relative to $\omega$ to establish vanishing of certain $L^2$-Dolbeault cohomologies for $S^m T_{\Sigma}^*$, which is then used to obtain the desired conclusion.
For { $1 \leq r \leq n-1$}, we apply the strategy presented in \cite{Ch11}: By using Berndtsson-Charpentier's proof of a version of Donnelly-Fefferman type estimate, along with suitably chosen weights, we are able to obtain an $L^2$-estimate, which is not easily obtained using the standard H\"ormander $L^2$-estimate.
As a result, we establish the local solvability of the $\bar \partial$-equation on $S^m T_{\Sigma}^*$-valued $(0,1)$-forms.

For a compact hermitian manifold $X$ and a holomorphic hermitian vector bundle $E$ over $X$, it is well known that the Dolbeault cohomology group $H^{r,s}(X,E)$ is finite dimensional. 
In the context of Hodge theory, it has been shown that the set of $E$-valued harmonic $(r,s)$-forms on $X$ is finite dimensional and this set is isomorphic to $H^{r,s}(X,E)$. 
However, when $X$ is non-compact, interesting phenomenon occur. For instance, in 1983, Donnelly-Fefferman showed that the dimension of the space of square integrable harmonic $(r,s)$-forms vanishes if $r+s\neq n$ and is infinite if $r+s=n$, when $X$ is a strictly pseudococonvex domain in $\mathbb C^n$ equipped with its Bergman metric. 
In line with this research, refer to \cite{DF83, O89, G91}.
In this paper, we are interested in the finite dimensionality of $L^2$-Dolbeault cohomology when $X$ is a complex hyperbolic space form $\Sigma$, i.e. a ball quotient with finite volume and $E$ is the symmetric power $S^mT^*_\Sigma$. By Theorem~\ref{main} we immediately obtain

\begin{corollary}
If $r=0,n$ or $m \geq n-1$, then   $\dim H^{r,s}_{L^2, \bar \partial} ( \Sigma, S^m T_{\Sigma}^*)<\infty$.
\end{corollary}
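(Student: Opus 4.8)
The plan is to obtain the corollary as an immediate consequence of Theorem~\ref{main}. Under the stated hypothesis that $r = 0, n$ or $m \geq n-1$, Theorem~\ref{main} provides a holomorphic vector bundle $E_{r,m}$ on the toroidal compactification $\overline{\Sigma}$ together with an isomorphism
\[
H^{r,s}_{L^2, \bar\partial}(\Sigma, S^m T_\Sigma^*) \;\cong\; H^s\big(\overline{\Sigma}, E_{r,m}\big),
\]
so it suffices to bound the right-hand side. Because $\Gamma$ consists only of unipotent parabolic automorphisms, $\overline{\Sigma}$ is a \emph{smooth compact} complex manifold, and the locally free sheaf $\mathcal{O}(E_{r,m})$ associated to $E_{r,m}$ is a coherent analytic sheaf on it.

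I would then invoke the Cartan--Serre finiteness theorem: the cohomology groups of any coherent analytic sheaf on a compact complex manifold are finite-dimensional $\mathbb{C}$-vector spaces. Applied to $\mathcal{O}(E_{r,m})$ on $\overline{\Sigma}$, this gives $\dim_{\mathbb{C}} H^s(\overline{\Sigma}, E_{r,m}) < \infty$ for every $s$, and transporting the bound through the isomorphism above yields $\dim H^{r,s}_{L^2, \bar\partial}(\Sigma, S^m T_\Sigma^*) < \infty$, as claimed.

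There is no genuine obstacle at this stage: the content of the corollary is entirely carried by Theorem~\ref{main}, whose proof absorbs all of the analytic subtleties (most notably the $L^2$ Dolbeault--Grothendieck lemma for $S^m T_\Sigma^*$ with respect to the Bergman-induced metric $\omega$, which fails to be of Poincar\'e type near the boundary divisor of $\overline{\Sigma}$). The only point worth double-checking is that the exceptional restriction on the pair $(r,m)$ is exactly the range in which Theorem~\ref{main} has been established; outside of that range the statement makes no assertion.
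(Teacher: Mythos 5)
Your argument is correct and is exactly how the paper obtains this corollary: it is stated as an immediate consequence of Theorem~\ref{main}, with the finite-dimensionality of $H^s(\overline{\Sigma}, E_{r,m})$ supplied by compactness of $\overline{\Sigma}$ and coherence of $\mathcal{O}(E_{r,m})$ (Cartan--Serre). No further comment is needed.
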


\medskip

Using Hodge theory methods for compact complex manifolds, Kashihara-Kawai~\cite{KK87} demonstrated the $L^2$-Hodge decomposition theorem. For a concise summary, we use Zhao's presentation in \cite{Z15}.

\begin{theorem}[\cite{Z15, KK87}]\label{KK}
    Let $(X,g)$ be a complete hermitian manifold of dimension $n$ and let $(E,h)$ be a hermitian vector bundle over $X$. For $0 \leq s \leq n$, suppose that $H^{r,s}_{L^2, \bar\partial}(X,E)$ are finite dimensional for all $s$. Then
    \begin{enumerate}
        \item The operator $\bar \partial\colon L^{r,s}_2(X,E)\to L^{r,s+1}_2(X,E)$ and its Hilbert adjoint $\bar\partial^*$ have closed images, and there is an orthogonal decomposition for each $q$,
        \begin{equation}\nonumber
            L^{r,s}_2(X,E) = \textup{Im}\, \bar\partial \oplus \textup{Im}\, \bar\partial^* \oplus \mathcal H^{r,s}_2(X,E),
        \end{equation}
        where $\mathcal  H^{r,s}_{L^2, \bar\partial}(X,E):= \ker \bar\partial\cap \ker \bar\partial^*$.
        As a consequence, there is an isomorphism
        $$H^{r,s}_{L^2, \bar\partial}(X,E)\cong \mathcal H^{r,s}_2(X,E).$$
        \item The operator $\Delta_{\bar \partial} = \bar \partial^* \circ \bar \partial + \bar \partial \circ \bar \partial^*$ is a self-adjoint operator acting on $L^{r,s}_{2} (X,E)$, and it satisfies
      $
       \bar \partial^* \circ \Delta_{\bar \partial}  = \Delta_{\bar \partial} \circ \bar \partial^*. 
        $
       \item Denoting $H$ be the projection operator $H\colon L^{r,s}_2(X,E)\to \mathcal H^{r,s}_2(X,E)$, then the Green operator 
       $G:= \left(\Delta_{\bar \partial}|_{\mathcal H^{r,s}_2(X,E)^\perp}\right)^{-1}(I -H)$ is well defined and bounded.
       Moreover, we have the following identity:
        $$
        \Delta_{\bar \partial} \circ G = G \circ \Delta_{\bar \partial} = I -H,\quad H \circ G=G \circ H=0.
       $$
  \end{enumerate}
\end{theorem}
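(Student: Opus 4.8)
The plan is to derive all three assertions from the abstract Hilbert-space theory of closed, densely defined operators, the only genuinely geometric inputs being the completeness of $(X,g)$ and the finite-dimensionality hypothesis. Completeness enters through the Andreotti--Vesentini--Gaffney density argument, which guarantees that the maximal (distributional) operator $\bar\partial\colon L^{r,s}_2(X,E)\to L^{r,s+1}_2(X,E)$ is closed and densely defined and that its Hilbert-space adjoint is the closed extension $\bar\partial^*$ of the formal adjoint; thus $\bar\partial$ and $\bar\partial^*$ form a genuine adjoint pair with $\bar\partial\circ\bar\partial=0$. Granting this, purely Hilbert-space considerations ($(\textup{Im}\,T)^\perp=\ker T^*$, together with $\textup{Im}\,T\subseteq\ker S$ whenever $S\circ T=0$) yield, for each $(r,s)$, the weak orthogonal decomposition
\begin{equation*}
L^{r,s}_2(X,E)=\overline{\textup{Im}\,\bar\partial}\ \oplus\ \mathcal H^{r,s}_2(X,E)\ \oplus\ \overline{\textup{Im}\,\bar\partial^*},\qquad \mathcal H^{r,s}_2(X,E)=\ker\bar\partial\cap\ker\bar\partial^*,
\end{equation*}
together with $\ker\bar\partial=\overline{\textup{Im}\,\bar\partial}\oplus\mathcal H^{r,s}_2(X,E)$, and hence a short exact sequence $0\to \overline{\textup{Im}\,\bar\partial}/\textup{Im}\,\bar\partial\to H^{r,s}_{L^2,\bar\partial}(X,E)\to \mathcal H^{r,s}_2(X,E)\to 0$.

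The crux is to upgrade ``dense image'' to ``closed image'' using the finite-dimensionality hypothesis. Here one invokes the standard functional-analytic fact that, for any closed densely defined operator $T$, the quotient $\overline{\textup{Im}\,T}/\textup{Im}\,T$ is either $0$ or infinite-dimensional: if it were finite-dimensional and nonzero, adjoining finitely many vectors of $\overline{\textup{Im}\,T}$ would produce a closed surjective operator onto $\overline{\textup{Im}\,T}$, which by the open mapping theorem is open, and one then deduces (using that the graph of a closed operator is weakly closed) that $\textup{Im}\,T$ itself is closed, a contradiction. Since $H^{r,s}_{L^2,\bar\partial}(X,E)$ is finite-dimensional for every $s$ by hypothesis, the short exact sequence forces $\overline{\textup{Im}\,\bar\partial}/\textup{Im}\,\bar\partial=0$, i.e. $\bar\partial\colon L^{r,s-1}_2(X,E)\to L^{r,s}_2(X,E)$ has closed image; applying this in bidegree $s+1$, and using that a closed densely defined operator has closed range if and only if its adjoint does, $\bar\partial^*\colon L^{r,s+1}_2(X,E)\to L^{r,s}_2(X,E)$ has closed image as well. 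Replacing the closures by the images themselves in the decomposition above establishes part (1), together with the isomorphism $H^{r,s}_{L^2,\bar\partial}(X,E)\cong\mathcal H^{r,s}_2(X,E)$.

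For part (2), I would realize $\Delta_{\bar\partial}$ as the nonnegative self-adjoint operator associated, via the representation theorem for closed nonnegative quadratic forms, to $Q(u)=\|\bar\partial u\|^2+\|\bar\partial^*u\|^2$ with form domain $\textup{Dom}\,\bar\partial\cap\textup{Dom}\,\bar\partial^*$; on the natural operator domain it agrees with $\bar\partial^*\circ\bar\partial+\bar\partial\circ\bar\partial^*$, it is self-adjoint, $\ker\Delta_{\bar\partial}=\mathcal H^{r,s}_2(X,E)$, and this kernel reduces $\Delta_{\bar\partial}$. The identity $\bar\partial^*\circ\Delta_{\bar\partial}=\Delta_{\bar\partial}\circ\bar\partial^*$ then follows on the natural domains from $\bar\partial\circ\bar\partial=0=\bar\partial^*\circ\bar\partial^*$ and the spectral calculus for $\Delta_{\bar\partial}$.

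Finally, for part (3): part (1) and the closed-range theorem provide a constant $C$ with $\|w\|\le C\|\bar\partial w\|$ for $w\in\textup{Im}\,\bar\partial^*$ and $\|w\|\le C\|\bar\partial^*w\|$ for $w\in\textup{Im}\,\bar\partial$. Since $\mathcal H^{r,s}_2(X,E)^\perp=\textup{Im}\,\bar\partial\oplus\textup{Im}\,\bar\partial^*$, for $u$ in this subspace lying in the domain of $\Delta_{\bar\partial}$ one obtains $\|u\|^2\le C\,Q(u)=C\langle\Delta_{\bar\partial}u,u\rangle\le C\|\Delta_{\bar\partial}u\|\,\|u\|$, so $\Delta_{\bar\partial}$ restricted to $\mathcal H^{r,s}_2(X,E)^\perp$ is injective with closed range; being self-adjoint there, its range is all of $\mathcal H^{r,s}_2(X,E)^\perp$, and its inverse is bounded by the open mapping theorem. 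Setting $G:=\bigl(\Delta_{\bar\partial}|_{\mathcal H^{r,s}_2(X,E)^\perp}\bigr)^{-1}(I-H)$ and letting $G$ vanish on $\mathcal H^{r,s}_2(X,E)$ then yields $\Delta_{\bar\partial}\circ G=G\circ\Delta_{\bar\partial}=I-H$, $H\circ G=G\circ H=0$, and the boundedness of $G$. I expect the one genuinely delicate point to be the passage from finite-dimensional $L^2$-cohomology to closed range in the second paragraph; the remainder is either the general formalism of closed operators or the routine consequences of completeness that make $\bar\partial$ and $\bar\partial^*$ a mutually adjoint pair.
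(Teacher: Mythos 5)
Your argument is correct, and since the paper does not prove Theorem~\ref{KK} at all but quotes it from \cite{Z15, KK87}, the relevant comparison is with those sources: your route --- completeness giving the mutually adjoint pair $(\bar\partial,\bar\partial^*)$ via Andreotti--Vesentini--Gaffney, the weak orthogonal decomposition, finite-dimensional cohomology forcing closed range through the ``finite codimension implies closed image'' lemma, the form (Gaffney) realization of $\Delta_{\bar\partial}$, and the closed-range lower bound $\|u\|\le C\|\Delta_{\bar\partial}u\|$ on $\mathcal H^{r,s}_2(X,E)^\perp$ yielding a bounded Green operator --- is exactly the standard proof appearing there. The only step I would state more carefully is the closed-range lemma: the clean formulation is that $\mathrm{Im}\,T$, being the continuous image of the Banach space $\mathrm{Dom}\,T$ in the graph norm and of finite codimension in the closed subspace $\overline{\mathrm{Im}\,T}$, is closed because the continuous bijection $(\mathrm{Dom}\,T/\ker T)\oplus F\to\overline{\mathrm{Im}\,T}$ (with $F$ a finite-dimensional algebraic complement) is a Banach-space isomorphism by the open mapping theorem.
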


By Theorem~\ref{main} and Theorem~\ref{KK}, we have 
\begin{theorem}\label{Hodge decomposition} 
For the unit ball $\mathbb B^n$ in $\mathbb C^n$, let $\Gamma$ be a torsion-free lattice of $\text{Aut}(\mathbb{B}^n)$ with only unipotent parabolic automorphisms and let $\Sigma = \mathbb{B}^n/ \Gamma$ be a quotient of $\mathbb B^n$ with finite volume. Let $g$ be the induced metric on $\Sigma$ from the Bergman metric on $\mathbb B^n$.  { If $r=0,n$ or $m \geq n-1$, then for each $s\in \mathbb N\cup\{0\}$}
    \begin{enumerate}
        \item The operator $\bar \partial\colon L^{r,s}_2(\Sigma,S^m T_{\Sigma}^*)\to L^{r,s+1}_2(\Sigma,S^m T_{\Sigma}^*)$ and its Hilbert adjoint $\bar\partial^*$ have closed images, and there is an orthogonal decomposition for each $s$,
        \begin{equation}\nonumber
            L^{r,s}_2(\Sigma,S^m T_{\Sigma}^*) = \textup{Im}\, \bar\partial \oplus \textup{Im}\, \bar\partial^* \oplus \mathcal H^{r,s}_2( \Sigma,S^m T_{\Sigma}^*),
        \end{equation}
        where $\mathcal  H^{r,s}_2(\Sigma,S^m T_{\Sigma}^*):= \ker \bar\partial\cap \ker \bar\partial^*$.
        As a consequence, there is an isomorphism
        $$
        H^{r,s}_{L^2, \bar\partial}(\Sigma,S^m T_{\Sigma}^*)\cong \mathcal H^{r,s}_2(\Sigma,S^m T_{\Sigma}^*).
        $$
        \item The operator $\Delta_{\bar \partial} = \bar \partial^* \circ \bar \partial + \bar \partial \circ \bar \partial^*$ is a self-adjoint operator acting on $L^{r,s}_2 (\Sigma,S^m T_{\Sigma}^*)$, and it satisfies
        $
        \bar \partial^* \circ \Delta_{\bar \partial} = \Delta_{\bar \partial} \circ \bar \partial^*. 
        $
        \item Denoting $H$ be the projection operator $H\colon L^{r,s}_2(\Sigma,S^m T_{\Sigma}^*)\to \mathcal H^{r,s}_2(\Sigma,S^m T_{\Sigma}^*)$, then the Green operator 
        $G:= \left(\Delta_{\bar \partial} |_{\mathcal H^{r,s}_2(\Sigma,S^m T_{\Sigma}^*)^\perp}\right)^{-1}(I -H)$ is well defined and bounded.
        Moreover, we have the following identity:
        $$
        \Delta_{\bar \partial} \circ G = G \circ \Delta_{\bar \partial } = Id -H,\quad H \circ G=G \circ H=0.
        $$
        
    \end{enumerate}
\end{theorem}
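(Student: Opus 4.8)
The plan is to obtain Theorem~\ref{Hodge decomposition} as a direct consequence of Theorem~\ref{main} together with Theorem~\ref{KK}. The point is that Theorem~\ref{KK} already packages assertions (1)--(3) for an arbitrary complete hermitian manifold $(X,g)$ and hermitian vector bundle $(E,h)$, subject only to the finite-dimensionality of $H^{r,s}_{L^2,\bar\partial}(X,E)$ for all $s$; so it suffices to check that $(\Sigma,g)$ and $E=S^mT_\Sigma^*$ meet these hypotheses.

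First I would record that $(\Sigma,g)$ is a complete (Kähler) manifold: the Bergman metric on $\mathbb B^n$ is complete, and since $\Gamma\subset\text{Aut}(\mathbb B^n)$ acts by isometries of the Bergman metric, the induced metric $g$ on $\Sigma=\mathbb B^n/\Gamma$ is again complete. The bundle $S^mT_\Sigma^*$ is equipped with the hermitian metric induced by $g$, so $(\Sigma,g)$ and $(S^mT_\Sigma^*,\text{induced metric})$ fall within the scope of Theorem~\ref{KK}.

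Next I would invoke Theorem~\ref{main}. Under the hypothesis $r=0,n$ or $m\ge n-1$, it produces a holomorphic vector bundle $E_{r,m}$ over the \emph{compact} complex manifold $\overline\Sigma$ with
\[
H^{r,s}_{L^2,\bar\partial}(\Sigma,S^mT_\Sigma^*)\;\cong\;H^s\bigl(\overline\Sigma,E_{r,m}\bigr)\qquad\text{for every }s\in\mathbb N\cup\{0\}.
\]
Since $\overline\Sigma$ is compact and $E_{r,m}$ is locally free (hence coherent), Cartan--Serre finiteness gives $\dim H^s(\overline\Sigma,E_{r,m})<\infty$ for all $s$, and therefore $\dim H^{r,s}_{L^2,\bar\partial}(\Sigma,S^mT_\Sigma^*)<\infty$ for all $s$. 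This is precisely the finite-dimensionality hypothesis required to apply Theorem~\ref{KK}.

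Finally, applying Theorem~\ref{KK} with $X=\Sigma$, $g$ the induced metric, and $E=S^mT_\Sigma^*$ yields at once the closed-range property for $\bar\partial$ and $\bar\partial^*$, the orthogonal three-term decomposition of $L^{r,s}_2(\Sigma,S^mT_\Sigma^*)$ together with the isomorphism onto the harmonic space, the self-adjointness and commutation identity for $\Delta_{\bar\partial}$, and the existence and boundedness of the Green operator $G$ with the stated relations; these are exactly statements (1), (2), (3). I do not expect any genuine obstacle here once Theorem~\ref{main} is granted: the only points deserving a line of care are the completeness of $g$ and the observation that Theorem~\ref{KK} only needs finiteness along the single row of bidegrees $(r,\bullet)$ under consideration, which is precisely what the displayed isomorphism supplies.
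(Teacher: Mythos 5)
Your proposal is correct and follows essentially the same route as the paper: Theorem~\ref{Hodge decomposition} is stated there as an immediate consequence of combining Theorem~\ref{main} (which, via the isomorphism with the sheaf cohomology of the locally free sheaf $E_{r,m}$ on the compact manifold $\overline\Sigma$, gives finite-dimensionality of $H^{r,s}_{L^2,\bar\partial}(\Sigma,S^mT^*_\Sigma)$ for all $s$) with the Kashiwara--Kawai/Zhao result, Theorem~\ref{KK}, applied to the complete hermitian manifold $(\Sigma,g)$. The extra details you supply (completeness of the quotient metric, Cartan--Serre finiteness) are exactly the right justifications for the hypotheses of Theorem~\ref{KK}.
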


By applying Theorem~\ref{Hodge decomposition}, we extend the results presented in \cite{A21,LS23-2} from compact complex hyperbolic spaces to complex hyperbolic spaces with finite volume:  let $\Omega=\mathbb B^n\times\mathbb B^n / \Gamma$ be the quotient of $\mathbb{B}^n \times \mathbb{B}^n$ under the diagonal action $(z,w)\mapsto(\gamma z, \gamma w) $ for $\gamma\in\Gamma$.  Then $\Omega$ is a holomorphic $\mathbb B^n$-fiber bundle over $\Sigma$.  A K\"ahler form $\omega_{\Omega}$ on $\Omega$ is defined by
$$
\omega_{\Omega} := \frac{\sqrt{-1}}{n+1} \partial \bar \partial \log K_{\mathbb{B}^n} (z,z) + \frac{\sqrt{-1}}{n+1} \partial \bar \partial \log K_{\mathbb{B}^n} (w,w)
$$
and the volume form $dV_{\omega_{\Omega}}$ is defined by $ \frac{1}{(2n)!} \omega_{\Omega}^{2n}$. Given the volume form,  the volume of $\Omega$ is finite (see the proof of Lemma 4.14 of \cite{LS23-2}). 

Consider an automorphism of $\mathbb{B}^n$ 
$$
T_z (w) := \frac{z - P_z (w) - s_z Q_z(w)}{1- w \cdot \bar z}
$$
where $|z|^2 = z \cdot \bar z$ and $s_z = \sqrt{1-|z|^2}$, $P_z$ is the orthogonal projection from $\mathbb{C}^n$ onto the one-dimensional subspace $[z]$ generated by $z$, and $Q_z$ is the orthogonal projection from $\mathbb{C}^n$ onto $[z]^{\perp}$. For measurable sections $f,g$ on $\Lambda^{r,s} T_{\Omega}^*$, we define
$$
\langle \langle f, g \rangle \rangle_{\alpha} := \frac{\Gamma(n+\alpha+1)}{n! \Gamma (\alpha+1)} \int_{\Omega} \langle f, g \rangle_{\omega_{\Omega}} \delta^{\alpha+(n+1)} dV_{\omega_{\Omega}}
$$ 
where $\delta := 1- |T_z w|^2$. We define the weighted $L^2$-space by
$$
L^2_{(r,s),\alpha}(\Omega) := \{ f: f \; \text{is a measurable section on} \; \Lambda^{r,s} T_{\Omega}^* \; \text{satisfying} \; \| f \|^2_{\alpha} := \langle \langle f, f \rangle \rangle_{\alpha} <\infty \}
$$
for $\alpha >-1$, and define $A^2_{\alpha} (\Omega) := L^2_{(0,0),\alpha} (\Omega) \cap \mathcal{O}(\Omega)$. 
The \textit{Hardy space} is defined by
$$
A^2_{-1}(\Omega) = \{ f \in \mathcal{O}(\Omega) : \| f \|^2_{-1} := \lim_{\alpha \searrow -1} \| f \|^2_{\alpha} < \infty \}.
$$

\begin{theorem}\label{extension}
Let $\Gamma$ be a torsion-free lattice of the automorphism group of the complex unit ball $\mathbb B^n$ with only unipotent parabolic automorphisms and $\Sigma = \mathbb B^n / \Gamma$ be a complex hyperbolic space form with finite volume.
Let $\Omega = \mathbb B^n \times \mathbb B^n /\Gamma$ be a holomorphic $\mathbb B^n$-fiber bundle
under the diagonal action of $\Gamma$ on $\mathbb B^n\times\mathbb B^n$.  Then we have an injective linear map
$$
\Phi\colon \bigoplus_{m=0}^\infty  H^{0,0}_{L^2, \bar \partial} (\Sigma, S^m T^*_\Sigma)  
\rightarrow \bigcap_{\alpha>-1}
A^2_{\alpha}(\Omega) \subset \mathcal O(\Omega)
$$
having a dense image in $\mathcal O(\Omega)$ equipped with
compact open topology.
\end{theorem}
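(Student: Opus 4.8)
\emph{The plan is to carry over the construction and argument of \cite{A21,LS23-2} from the compact case, supplying the new analytic input near the cusps through Theorems~\ref{main} and \ref{Hodge decomposition}.}

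\smallskip
\noindent\emph{The map and its domain.} By Theorem~\ref{Hodge decomposition}(1) with $r=s=0$ and interior elliptic regularity, $H^{0,0}_{L^2,\bar\partial}(\Sigma,S^mT^*_\Sigma)=\mathcal H^{0,0}_2(\Sigma,S^mT^*_\Sigma)$ is exactly the space of $L^2$ holomorphic sections of $S^mT^*_\Sigma$, and by Theorem~\ref{main} with $r=0$ it is finite dimensional, isomorphic to $H^0(\overline\Sigma,E_{0,m})$; in particular the direct sum in the statement is a finite algebraic sum. For $\psi$ in the $m$-th summand one lifts it to a $\Gamma$-equivariant holomorphic section $\widetilde\psi$ of $S^mT^*_{\mathbb B^n}$ and forms, as in \cite{A21,LS23-2}, the holomorphic function $\widetilde\Phi(\psi)$ on $\mathbb B^n\times\mathbb B^n$ built from $\widetilde\psi$ and the involutions $T_z$ (roughly: $\widetilde\psi(z)$, transported to the origin by $(dT_z)_z$, evaluated at the vector $T_z(w)$). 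The relation $T_{\gamma z}\circ\gamma=U_\gamma(z)\circ T_z$ with $U_\gamma(z)\in U(n)$, together with the $\Gamma$-equivariance of $\widetilde\psi$, shows $\widetilde\Phi(\psi)$ is $\Gamma$-invariant and descends to $\Phi(\psi)\in\mathcal O(\Omega)$; extend $\Phi$ linearly over the finitely many summands.

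\smallskip
\noindent\emph{Mapping into $\bigcap_{\alpha>-1}A^2_\alpha(\Omega)$, and injectivity.} I would compute $\|\Phi(\psi)\|_\alpha$ through the fibration $p_1\colon\Omega\to\Sigma$. On $\mathbb B^n\times\mathbb B^n$ the volume form $dV_{\omega_\Omega}$ is a constant multiple of the product of the Bergman volume forms in $z$ and in $w$, so $\int_\Omega(\cdot)\,dV_{\omega_\Omega}$ splits, up to a constant, as $\int_\Sigma\big(\int_{\mathrm{fibre}}(\cdot)\,dV_{\mathbb B^n}(w)\big)dV_\Sigma(z)$. On the fibre over $z$ the map $w\mapsto u:=T_z(w)$ is a Bergman isometry of $\mathbb B^n$ and $\delta=1-|u|^2$; after this substitution $\Phi_m(\psi)|_{\mathrm{fibre}}$ becomes, up to the isometry, a homogeneous polynomial $P_z$ of degree $m$ on $\mathbb B^n$ whose $S^m$-norm is a fixed constant multiple of $|\psi(z)|^2_\omega$. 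The remaining one-variable integral $\int_{\mathbb B^n}|P_z(u)|^2(1-|u|^2)^\alpha\,dV_{\mathrm{eucl}}(u)$ is a classical Beta-type integral, finite for every $\alpha>-1$ and equal to a constant $c(n,m,\alpha)$ times the $S^m$-norm of $P_z$; integrating over $\Sigma$ yields
\[
\|\Phi_m(\psi)\|_\alpha^2=c(n,m,\alpha)\,\|\psi\|^2_{L^2(\Sigma,S^mT^*_\Sigma)}<\infty\qquad(\alpha>-1),
\]
so, summing over the summands, $\Phi(\psi)\in\bigcap_{\alpha>-1}A^2_\alpha(\Omega)$. Since homogeneous polynomials of distinct degrees are orthogonal in $L^2(\mathbb B^n,(1-|u|^2)^\alpha)$, this computation also shows that $\Phi(\psi)=0$ forces every $\widetilde\psi_m\equiv0$, i.e. $\psi=0$: injectivity.

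\smallskip
\noindent\emph{Density — the main obstacle.} Here the finite-volume case differs essentially from the compact one. The image $\mathcal A:=\Phi\big(\bigoplus_m H^{0,0}_{L^2,\bar\partial}(\Sigma,S^mT^*_\Sigma)\big)$ is a graded subalgebra of $\mathcal O(\Omega)$ containing the constants, because $\Phi_m(\psi)\cdot\Phi_{m'}(\psi')=\Phi_{m+m'}(\psi\cdot\psi')$ for the symmetric product and the bundles $E_{0,\bullet}$ form a sheaf of $\mathcal O_{\overline\Sigma}$-algebras ($E_{0,m}\otimes E_{0,m'}\to E_{0,m+m'}$), so products of $L^2$ sections remain $L^2$. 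To prove $\overline{\mathcal A}=\mathcal O(\Omega)$ I would: (i) show every $G\in\mathcal O(\Omega)$ admits a fibre expansion $G=\sum_m\Phi_m(\chi_m)$, $\chi_m\in H^0(\Sigma,S^mT^*_\Sigma)$, converging in the compact-open topology — this uses that each fibre of $p_1$ is the ball $\mathbb B^n$, so that the Taylor expansion of $G$ along the diagonal section converges on the whole fibre, uniformly on compact subsets of $\Omega$ by continuity in $z$; and (ii) show that the coefficients $\chi_m$ are automatically $L^2$, so that the truncations $G_N:=\sum_{m\le N}\Phi_m(\chi_m)$ lie in $\mathcal A$ and converge to $G$. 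Step (ii) is the crux: one must verify that a holomorphic symmetric differential on $\Sigma$ occurring as a fibre-coefficient of a function on $\Omega$ is square integrable for the Bergman metric — equivalently, that $H^0(\overline\Sigma,E_{0,m})$ already exhausts the relevant holomorphic sections; using that the cusps of $\Sigma$ have complex codimension $n$ in the Baily--Borel compactification such a section extends across the cusps, and the needed $L^2$-bound near a cusp — where the Bergman metric is \emph{not} of Poincar\'e type, so the standard log-extension statements do not apply — is obtained from the toroidal model of the cusp neighbourhood together with the explicit description of $E_{0,m}$ furnished by the proof of Theorem~\ref{main} in the case $r=0$. Granting (i) and (ii), density follows; verifying (ii) near the cusps is where I expect the real work to lie.
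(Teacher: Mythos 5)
Your construction of $\Phi$ breaks down at the very first step: the expression ``$\widetilde\psi(z)$, transported by $(dT_z)_z$, evaluated at $T_z(w)$'' is \emph{not} holomorphic in $z$. Both $T_z(w)$ and $A=dT_z(z)$ depend on $\bar z$ (via $P_z$, $Q_z$, $s_z$ and $1-w\cdot\bar z$); the paper states this explicitly when it observes that $\widetilde f(z,t)=f(z,T_zt)$ is holomorphic in $t$ but not in $z$. Consequently a single homogeneous term $\sum_{|I|=N}f_I(z)(T_zw)^I$ with $\psi=\sum f_Ie^I$ holomorphic never lies in $\mathcal O(\Omega)$ unless $\mathcal R_G^N(\psi)=0$, i.e.\ $\psi=0$. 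The actual construction must produce an infinite series $\sum_{s\ge0}\sum_{|I|=N+s}f_I(z)(T_zw)^I$ whose coefficients are generated recursively by solving $\bar\partial\varphi_{N+s}=-(N+s-1)\mathcal R_G(\varphi_{N+s-1})$ with \emph{minimal} solutions; holomorphicity in $z$ is exactly equivalent to this recursion. This is where Theorem~\ref{Hodge decomposition} (existence of the Green operator, hence of minimal solutions) and Lemma~\ref{property of RG}(1) are genuinely used, and the norm identity \eqref{solsuff} for $\|\varphi_{N+s}\|$ is what makes the series converge and land in $A^2_\alpha(\Omega)$ for every $\alpha>-1$. Your norm computation $\|\Phi_m(\psi)\|^2_\alpha=c(n,m,\alpha)\|\psi\|^2$ only accounts for the leading term, and the claimed multiplicativity $\Phi_m(\psi)\Phi_{m'}(\psi')=\Phi_{m+m'}(\psi\psi')$ is unjustified for the corrected map (the higher-order coefficients are minimal solutions, not products). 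Injectivity, on the other hand, does survive in essentially the form you describe, since the lowest-degree fibre coefficient of $\Phi(\psi)$ recovers $\psi$.

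The density argument has a parallel flaw. Your step (i) asserts that every $G\in\mathcal O(\Omega)$ expands as $\sum_m\Phi_m(\chi_m)$ with $\chi_m\in H^0(\Sigma,S^mT^*_\Sigma)$; but the fibre coefficients $\varphi_k$ of a holomorphic $G$ satisfy $\bar\partial\varphi_k=-(k-1)\mathcal R_G(\varphi_{k-1})$, so only the lowest nonvanishing one is a holomorphic symmetric differential --- the expansion of $G$ is not a termwise sum of images of holomorphic sections, and step (ii) (automatic square-integrability of the coefficients) is therefore aimed at the wrong object. The paper instead obtains density by the argument of \cite[Lemma 4.18]{LS23-2}, whose new ingredient here is Lemma~\ref{property of RG}(2): the statement that $\mathcal R_G^m$ maps the orthogonal complement of $\ker(\Box_m^0-\lambda I)$ into the orthogonal complement of $\ker(\Box^1_{m+1}-(\lambda+2m)I)$, proved without compactness of the Green operator. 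That spectral input, not an extension-across-the-cusps argument, is the substitute for compactness in the finite-volume setting.
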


By apply a similar argument given in \cite[Corollary 4.19, Theorem 4.20]{LS23-2}, we obtain
\begin{corollary}\label{no nonconstant holo}
Let $\Sigma = \mathbb{B}^n /\Gamma$ be a hyperbolic space form with finite volume for a torsion-free lattice $\Gamma \subset \text{Aut}(\mathbb{B}^n)$. Let $\Omega$ be the quotient of $\mathbb{B}^n \times \mathbb{B}^n$ by the diagonal action of $\Gamma$. Then $A^2_{-1}(\Omega) \cong \mathbb{C}$ and $\Omega$ has no any non-constant bounded holomorphic function.
\end{corollary}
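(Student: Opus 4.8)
The plan is to follow the argument of \cite[Corollary 4.19 and Theorem 4.20]{LS23-2}, substituting for the two places where compactness of $\Sigma$ was used: Theorem~\ref{extension} replaces the construction of the injection of symmetric differentials into holomorphic functions on $\Omega$, and Theorem~\ref{main} (in the case $r=s=m=0$) replaces the identification $H^{0}(\Sigma,\mathcal O_\Sigma)=\mathbb C$. There are two assertions: first $A^2_{-1}(\Omega)\cong\mathbb C$, and second that $\Omega$ carries no nonconstant bounded holomorphic function. Granting the first assertion together with the fact that the constant function lies in $A^2_{-1}(\Omega)$, the second follows immediately, so the heart of the matter is the Hardy space computation.

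First I would record that $1\in A^2_{-1}(\Omega)$: since $\Sigma$ has finite volume and $0<\delta\le 1$, a direct computation --- integrating first over the fibres $\mathbb B^n$ of $\Omega\to\Sigma$, on which $\delta=1-|u|^2$ and $dV_{\omega_\Omega}$ restricts to the Bergman volume --- shows that $\|1\|^2_\alpha$ is a finite constant independent of $\alpha>-1$, hence $\|1\|^2_{-1}<\infty$. Consequently, for a bounded $f\in\mathcal O(\Omega)$ one has $\|f\|^2_\alpha\le\|f\|^2_{L^\infty(\Omega)}\,\|1\|^2_\alpha$ for every $\alpha>-1$, and letting $\alpha\searrow-1$ gives $\|f\|^2_{-1}\le\|f\|^2_{L^\infty(\Omega)}\,\|1\|^2_{-1}<\infty$; thus $f\in A^2_{-1}(\Omega)$, and once we know $A^2_{-1}(\Omega)\cong\mathbb C$ it follows that $f$ is constant. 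This reduces both assertions to identifying $A^2_{-1}(\Omega)$.

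For that identification I would argue exactly as in \cite{LS23-2}. Using Theorem~\ref{extension}, the degree decomposition $\bigoplus_m H^{0,0}_{L^2,\bar\partial}(\Sigma,S^mT^*_\Sigma)$ maps into $\bigcap_{\alpha>-1}A^2_\alpha(\Omega)$; the images of distinct summands are mutually orthogonal in each $L^2_{(0,0),\alpha}(\Omega)$ (the $m$-th summand consists of functions whose fibrewise Taylor expansion along the diagonal starts in degree $m$), and on the $m$-th summand $\|\Phi(\sigma)\|^2_\alpha$ equals an explicit $\Gamma$-factor times $\|\sigma\|^2_{L^2(\Sigma,S^mT^*_\Sigma)}$. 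The computation of these $\Gamma$-factors in \cite{LS23-2} shows that, in the limit $\alpha\searrow-1$, only the summand $m=0$ contributes a finite, nonzero Hardy norm; combined with the density of $\operatorname{Im}\Phi$ in $\mathcal O(\Omega)$ for the compact--open topology, this forces $A^2_{-1}(\Omega)$ to coincide with (the closure of) $\Phi\big(H^{0,0}_{L^2,\bar\partial}(\Sigma,\mathcal O_\Sigma)\big)$. Finally, $H^{0,0}_{L^2,\bar\partial}(\Sigma,\mathcal O_\Sigma)\cong\mathbb C$: an $L^2$ holomorphic function on $\Sigma$ is bounded near each cusp (a pole along the boundary divisor of $\overline\Sigma$ grows exponentially and cannot be $L^2$), hence extends to the compact manifold $\overline\Sigma$ and is therefore constant; alternatively this is Theorem~\ref{main} with $r=s=m=0$, which gives $H^{0,0}_{L^2,\bar\partial}(\Sigma,\mathcal O_\Sigma)\cong H^0(\overline\Sigma,E_{0,0})=\mathbb C$. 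Thus $A^2_{-1}(\Omega)\cong\mathbb C$, and the proof is complete.

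The main obstacle is making the $\alpha\searrow-1$ asymptotics of the weighted norms, together with the orthogonality and the norm identities above, rigorous on the \emph{finite-volume} quotient: one must verify that the cusp neighbourhoods of $\Omega$ --- absent in the compact setting of \cite{LS23-2} --- do not spoil the convergence of the $\delta^{\alpha+(n+1)}$-weighted integrals for $\alpha>-1$, do not affect the behaviour of the $\Gamma$-factors as $\alpha\searrow-1$, and that compact--open density of $\operatorname{Im}\Phi$ genuinely suffices to exclude further functions from the Hardy space. Theorem~\ref{extension} is tailored so that, with these checks in place, the remainder of the \cite{LS23-2} argument applies verbatim.
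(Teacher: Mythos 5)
There is a genuine gap: you apply Theorem~\ref{extension} (and, in your alternative argument, Theorem~\ref{main}) directly to $\Omega=\mathbb B^n\times\mathbb B^n/\Gamma$, but those theorems are only proved under the hypothesis that $\Gamma$ has \emph{only unipotent parabolic automorphisms} --- that hypothesis is what guarantees the toroidal compactification $\overline\Sigma$ is a smooth manifold, on which the whole machinery of the paper (the bundles $E_{r,m}$, the local analysis near the cusps, the Green operator of Theorem~\ref{Hodge decomposition} used to build $\Phi$) is based. Corollary~\ref{no nonconstant holo} deliberately drops that hypothesis: it is stated for an arbitrary torsion-free lattice $\Gamma\subset\text{Aut}(\mathbb B^n)$ of finite covolume. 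As written, your argument silently assumes something the statement does not give you.

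The missing idea is a reduction step. As noted in the introduction, every torsion-free lattice admits a finite-index sublattice $\Gamma'\subset\Gamma$ with only unipotent parabolic automorphisms. The paper's proof first applies the $\cite{LS23-2}$-style argument (which is essentially what you sketch in your second and third paragraphs) to $\Omega'=\mathbb B^n\times\mathbb B^n/\Gamma'$, where Theorem~\ref{extension} is available, obtaining $A^2_{-1}(\Omega')\cong\mathbb C$ and the nonexistence of nonconstant bounded holomorphic functions on $\Omega'$. It then descends: the finite covering $\Omega'\to\Omega$ pulls back a bounded holomorphic function on $\Omega$ to one on $\Omega'$, which must be constant, and similarly a Hardy-class function on $\Omega$ pulls back (with norm controlled by the covering degree) to one on $\Omega'$, forcing $A^2_{-1}(\Omega)\cong\mathbb C$. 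Apart from this omission, the content of your expansion of the $\cite{LS23-2}$ argument (orthogonality of the graded pieces, the $\Gamma$-factor asymptotics as $\alpha\searrow-1$, the identification $H^{0,0}_{L^2,\bar\partial}(\Sigma,\mathcal O_\Sigma)\cong\mathbb C$) is consistent with what the paper invokes by citation; you should insert the passage to $\Gamma'$ at the top and phrase everything on $\Omega'$ before descending.
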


This paper is organized as follows:
in Section~\ref{preliminaries}, we recall the description of the toroidal compactification of the ball quotient with finite volume and $L^2$-Dolbeault cohomology for holomorphic vector bundles over complete K\"ahler manifolds.
In Section~\ref{metric description}, we introduce a quasi-isometric metric on the ball quotient, comparing it to the induced metric from the Bergman metric on the unit ball, and investigate some of its properties.
In Section~\ref{proof of main theorem}, we prove Theorem~\ref{main} and Theorem~\ref{Hodge decomposition}.
In Section~\ref{application}, we demonstrate Theorem~\ref{extension} by using the existence of Green's operator on $S^mT_\Sigma^*$. Additionally, we provide a generalization of the result presented in \cite{LS23-1} to the ball quotient of finite volume when the quotient admits a smooth toroidal compactification.

\medskip

{\bf Acknowledgement}
The first author was supported by the Institute for Basic Science (IBS-R032-D1). The second author was partially supported by Basic Science Research Program through the National Research Foundation of Korea (NRF) funded by the Ministry of Education (NRF-2022R1F1A1063038).

\section{Preliminaries}\label{preliminaries}
\subsection{Toroidal compactification of finite volume ball quotient}
In this section, we will recall the description of the toroidal compactifications for $\mathbb B^n/\Gamma$ given in \cite{Mok, Wong}.

Let $\Gamma$ be a torsion-free subgroup of $\textup{Aut}(\mathbb B^n)$ and let $\Sigma=\mathbb B^n/\Gamma$. If $\Sigma$ is of finite volume for the induced Bergman metric on $\mathbb{B}^n$, then there exists only finite number of cusps $b_1, \cdots, b_k$ in $\partial \mathbb{B}^n$ \cite{S60, BB66, SY82}. For each $b\in \partial\mathbb B^n$, 
let $N_b:=\{\varphi\in\text{Aut}(\mathbb B^n): \varphi(b)=b\}$ be the normalizer of $b$. 

Now fix $b\in B$ and let 
$$
c\colon \mathbb B^n\to S_n:=\left\{(z',z_n)\in \mathbb C^{n-1}\times \mathbb C: \textup{Im}\, z_n> \|z'\|^2\right\}
$$
be a Cayley transformation such that $c$ extends real analytically to $\mathbb B^n-\{b\}$ and $c|_{\partial\mathbb B^n-\{b\}} \to \partial S_n$ is a real analytic diffeomorphism.
For any $N\geq 0$, define an open subset of $S_n$ by 
$$
S^{(N)} := \left\{(z',z_n)\in \mathbb C^{n-1}\times \mathbb C : \text{Im}z_n > |z'|^2 +N \right\}.
$$
Consider a holomorphic map $\Psi\colon \mathbb C^{n-1}\times\mathbb C\to\mathbb C^{n-1}\times\mathbb C^* $ given by 
\begin{equation}\label{covering}
\Psi(z',z_n)=(z', e^{\frac{2\pi i z_n}{\tau}}):=(w', w_n)
\end{equation}
for some $\tau\in \mathbb R$ and let $G := \Psi(S_n)$, $G^{(N)}:=\Psi(S^{(N)})$.
Then $G$ and $G^{(N)}$ are total spaces of a family of punctured discs over $\mathbb C^{n-1}$.
Define $\widehat G$ and $\widehat G^{(N)}$ by adding $\mathbb C^{n-1}\times \{0\}$ to $G$ and $G^{(N)}$, respectively.
Then we have 
\begin{equation}\nonumber
\begin{aligned}
\widehat G &= \left\{(w', w_n)\in \mathbb C^{n-1}\times\mathbb C: |w_n|^2 < e^{-\frac{4\pi}{\tau}\|w'\|^2}\right\},\\
\widehat G^{(N)} &= \left\{(w', w_n)\in \mathbb C^{n-1}\times\mathbb C: |w_n|^2 < e^{-\frac{4\pi N}{\tau}} e^{-\frac{4\pi}{\tau}\|w'\|^2}\right\}.
\end{aligned}
\end{equation}

Let $W_b$ be the unipotent radical of $N_b$ whose elements act on $S_n$ as affine automorphisms. Let $U_b:=[W_b, W_b]$ whose elements act on $S_n$ as translations in $z_n$ direction.
Then $\Gamma\cap W_b$ acts on $S$ as a discrete group of automorphisms and $[\Gamma\cap W_b, \Gamma\cap U_b]=0$, which implies that the action of $\Gamma\cap W_b$ descends from $S_n$ to $S_n/(\Gamma\cap U_b)$.
Since $U_b$ is 1-dimensional, $U_b\cap\Gamma\cong\mathbb Z$ is generated by some $\tau\in U_b\cong\mathbb R$. This implies that if we choose such $\tau$ to define the holomorphic map $\Psi$, we have $S_n/(\Gamma\cap U_b)\cong \Psi(S_n)=G$ 
and $S^{(N)}/(\Gamma\cap U_b)\cong\Psi(S^{(N)})=G^{(N)}$. Moreover, there is a group homomorphism
$\pi\colon \Gamma\cap W_b\to\text{Aut}(G)$ such that 
$\Psi\circ \varphi = \pi(\varphi)\circ\Psi$ for any $\varphi\in \Gamma\cap W_b$.
If $\Gamma$ has only unipotent prabolic automorphisms, then the action of $\pi(\Gamma\cap W_b)$ on $\mathbb C^{n-1}\times\{0\}$ is a lattice of translation $\Lambda_b$. From now on, we assume that $\Gamma$ has only unipotent parabolic automorphisms.
Define $D_b:=\mathbb C^{n-1}\times\{0\}/\Lambda_b$ be a torus. The toroidal compactification $\overline \Sigma$ of $\Sigma$ is set-theoretically given by 
$$
\overline \Sigma = \Sigma \cup \bigcup_{b\in B}D_b.
$$

Define 
$$\Omega_b^{(N)}:= \widehat G^{(N)}/\pi(\Gamma\cap W_b).$$ 
Thus there exists an embedding $\Omega_b^{(N)}-D_b \hookrightarrow \Sigma$ for sufficiently large $N$, and
we have 
\begin{equation}\nonumber
    \Omega_b^{(N)} \supset 
    G^{(N)}/\pi(\Gamma\cap W_b)\cong S^{(N)}/(\Gamma\cap W_b).
\end{equation}
For sufficiently large $N$, $\Omega_b^{(N)} -D_b$ do not overlap in $\Sigma$ for $b\in B$.

Consider the trivial line bundle 
\begin{equation}\nonumber
\begin{aligned}
    \mathbb C^{n-1}\times\mathbb C&\to\mathbb C^{n-1}\\
    (w', w_n) &\mapsto w'
\end{aligned}
\end{equation}
and define a hermitian metric
$$
\mu(w,w):= \|w\|^2 := e^{\frac{4\pi}{\tau}|w'|^2} |w_n|^2,
$$
which has negative curvature.
Since $\widehat G^{(N)}$ can be expressed by
$\left\{w\in\mathbb C^n : \mu(w,w)<e^{-\frac{4\pi}{\tau}N}\right\}$, $\widehat G^{(N)}$ is a level set of the trivial line bundle under the metric $\mu$.
Then the quotient of the restriction of the trivial line bundle to $\widehat G^{(N)}$
gives a line bundle $L\to D_b$ with the induced hermitian metric $\overline \mu$ of negative curvature.
This implies that $D_b$ is an abelian variety and $\Omega_b^{(N)}$ is the tubular neighborhood of $L$ with $\overline\mu$-length $<e^{\frac{-2\pi}{\tau}N}$.

Let $\omega_{S_n}$ be a K\"ahler form of $S_n$ given by 
\begin{equation}\nonumber
    \omega_{S_n} = \sqrt{-1}\partial\bar\partial(-\log(\text{Im}\, z_n-|z'|^2)).
\end{equation}
Remark that $\|\partial\left(\log(\text{Im}\, z_n -|z'|^2)\right)\|_{\omega_{S_n}}\equiv 1$ (cf. \cite{LS23}).
For the local coordinate $(w', w_n)$ given in \eqref{covering} in $\Omega_b^{(N)}$, the divisor $D_b$ is defined by $\{w_n=0\}$. 
Let $\omega_{\Omega_{b}^{(N)}}$ denote the induced metric on $\Omega_{b}^{(N)}$ from $\omega_{S_n}$ near the cusp $b$.
Using the coordinate change in \eqref{covering}, 
by the relation 
\begin{equation}\nonumber
    \log\|w\| = \frac{2\pi}{\tau} \left(|z'|^2 - \text{Im}\, z_n\right),
\end{equation}
we have 
\begin{equation}\nonumber
    \begin{aligned}
    \omega_{\Omega_{b}^{(N)}}
    &=\sqrt{-1}\partial \bar \partial (-\log(-\log \| w \|) ) \\
        \end{aligned}
\end{equation}
and 
\begin{equation}\nonumber
    \|\partial(-\log(-\log \| w \|) ) \|_{\omega_{\Omega_b^{(N)}}} 
    \equiv 1.
\end{equation}
Denote $\rho(w)$ be the strictly plurisubharmonic function which is defined near the cusp $b$ induced by the K\"ahler potential $-\log(-\log \| w \|) $. We let $dV_{\omega_{S_n}}$ be the volume form of $S_n$ with respect to its Bergman metric. We define the euclidean volume form as $dV = \sqrt{-1}\partial\bar\partial|w|^2$.
Then as $\|w\|\to 0$, there exist constants $C_1$, $C_2>0$ such that 
\begin{equation}\nonumber
    \frac{C_1}{\|w\|^2 (-\log\|w\|)^{n+1}} dV
    \leq dV_{\omega_{S_n}} 
    \leq \frac{C_2}{\|w\|^2 (-\log\|w\|)^{n+1}} dV.
\end{equation}

\subsection{$L^2$-Dolbeault cohomology} 
In this section, we will review some results for $L^2$-Dolbeault cohomologies.  For the detail, see \cite{Dem}. 

Let $(X,\omega)$ be a K\"ahler manifold and $(E,h^{E})$ be a holomorphic vector bundle over $X$. Let $|\cdot|^2_{\omega}$ be the induced norm on $\Lambda^{r,s}T_{X}^*$ from $\omega$. For $E$-valued $(r,s)$ forms $u$ and $v$, we denote by $\langle u, v \rangle_{h^E, \omega}$ the inner product on $E \otimes \Lambda^{r,s} T_{X}^*$ induced from $h^E$ and $\omega$. Denote $\langle u, u \rangle_{h^E, \omega}$ by $|u|^2_{h^E, \omega}$. For any measurable $E$-valued $(r,s)$ forms $u$ and $v$, we define an $L^2$-inner product of $u$ and $v$ by
$$
\langle \langle u , v \rangle \rangle_{h^E, \, \omega} := \int_{X} \langle u, v \rangle_{h^E, \, \omega} dV_{\omega}
$$
Then the $L^2$-norm of $u$ is given by
$$
\|u \|^2_{h^E, \, \omega} := \int_{X} |u|^2_{h^E, \, \omega} dV_{\omega}.
$$
For simplicity, we write $ \| u \|^2_{h^E, \, \omega}$ as $\| u \|^2$ if there is no ambiguity. 
The $L^2$-space of $E$-valued $(r,s)$ forms on ${X}$ is defined by
$$
L^{r,s}_{2} (X,E, h^E, \omega) := \left\{ f : \text{$f$ is a measurable section of $E\otimes \Lambda^{r,s}T_{X}^*$ such that $\| u \|^2 <\infty $}  \right\}.
$$
If there is no danger of confusion we abbreviate it to $L_{2}^{r,s}(X, E)$.  Let $C^\infty_{c,(r,s)}(X, E)$ be the space of compactly supported $E$-valued $(r,s)$ forms. We extend the operator 
$
\bar \partial: C^{\infty}_{c,(r,s)} (X, E) \rightarrow C^{\infty}_{c,(r,s+1)}(X, E)$ 
to a closed densely defined linear operator 
$$
\bar \partial : L^{r,s}_{2} (X,E) \rightarrow L^{r,s+1}_{2} (X,E)
$$ 
by taking the maximal closed extension of $\bar \partial$. Then we have the Hilbert adjoint 
$$
\bar \partial^*: L_{2}^{r,s}(X,E) \rightarrow L_{2}^{r,s-1}(X,
E)
$$
of $\bar \partial$, which is also a closed densely defined linear operator. We denote  by
$$
H^{r,s}_{L^2, \bar \partial} (X,E, h^E, \omega) \quad \text{or} \quad H^{r,s}_{L^2, \bar \partial} (X, E)
$$
the $L^2$-Dolbeault cohomology group of $(X, E, h^E, \omega)$.

Let $\Theta (E)$ be the Chern curvature tensor of $(E,h^E)$ and $\Lambda_{\omega}$ be the adjoint of the left multiplication of $\omega$. We say that $(X,\omega)$ has a K\"ahler potential if there exists a smooth function $\varphi:X \rightarrow \mathbb{R}$ satisfying $\omega = \sqrt{-1} \partial \bar \partial \varphi$.

\begin{proposition}\label{vanishing by kahler potential}
Let $(X,\omega)$ be a $n$-dimensional complete K\"ahler manifold and $(E,h^E)$ be a holomorphic vector bundle with a smooth hermitian metric $h^E$. If $(X,\omega)$ has a K\"ahler potential $\varphi$ such that $\sup_X|\partial \varphi |^2_{\omega} < C$ for a constant $C>0$ and $[i \Theta(E), \Lambda_{\omega} ] \geq 0$ in bidegree $(r,s)$, then
$$
H^{r,s}_{L^2, \bar \partial} (X,E, h^E,\omega)=0 \quad \text{when} \quad r+s \not = n.
$$
\end{proposition}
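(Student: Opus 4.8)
This is Proposition 2.7 (the "vanishing by Kähler potential" proposition). Let me think about how to prove it.The plan is to reduce everything to the a priori estimate
\[
\|u\|^2 \;\leq\; C'\left(\|\bar\partial u\|^2 + \|\bar\partial^* u\|^2\right)
\qquad\text{for all } u\in C^\infty_{c,(r,s)}(X,E),
\]
valid under the hypotheses whenever $r+s\neq n$, with a constant $C'$ depending only on $r,s,n$ and $C$. Once this is available, the completeness of $(X,\omega)$ lets me apply the Andreotti--Vesentini/Gaffney density lemma, so that $C^\infty_{c,(r,s)}(X,E)$ is dense in $\operatorname{Dom}\bar\partial\cap\operatorname{Dom}\bar\partial^*$ for the graph norm and the estimate propagates to all $u\in L^{r,s}_2(X,E)$ lying in $\operatorname{Dom}\bar\partial\cap\operatorname{Dom}\bar\partial^*$. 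Feeding in $u\in\mathcal H^{r,s}_2(X,E)=\ker\bar\partial\cap\ker\bar\partial^*$ gives $u=0$, and the same estimate forces $\operatorname{Im}\bar\partial$ to be closed in bidegree $(r,s)$; together with the $L^2$-orthogonal decomposition this yields $H^{r,s}_{L^2,\bar\partial}(X,E,h^E,\omega)\cong\mathcal H^{r,s}_2(X,E)=0$.

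To obtain the estimate I would use the K\"ahler potential to twist the bundle metric. For a real parameter $t$ set $h^E_t:=h^E e^{-t\varphi}$, so that $i\Theta(E,h^E_t)=i\Theta(E,h^E)+t\,\omega\otimes\operatorname{Id}_E$. Since $[L_\omega,\Lambda_\omega]=(r+s-n)\operatorname{Id}$ on $(r,s)$-forms, the curvature term of the Bochner--Kodaira--Nakano identity for $h^E_t$ satisfies $[i\Theta(E,h^E_t),\Lambda_\omega]|_{(r,s)}=[i\Theta(E,h^E),\Lambda_\omega]|_{(r,s)}+t(r+s-n)\operatorname{Id}\geq t(r+s-n)\operatorname{Id}$, using the hypothesis $[i\Theta(E),\Lambda_\omega]\geq 0$ in bidegree $(r,s)$. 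Choosing the sign of $t$ so that $\epsilon_t:=t(r+s-n)=|t|\,|r+s-n|>0$ — which is possible exactly because $r+s\neq n$ — the Bochner--Kodaira--Nakano inequality on the K\"ahler manifold $(X,\omega)$ with the metric $h^E_t$ gives, for $u\in C^\infty_{c,(r,s)}(X,E)$, that $\|\bar\partial u\|_{h^E_t}^2+\|\bar\partial^{*_t}u\|_{h^E_t}^2\geq\epsilon_t\|u\|_{h^E_t}^2$, where $\bar\partial^{*_t}$ is the formal adjoint for $h^E_t$. Note that this step needs no completeness, being an identity for compactly supported smooth forms.

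The crucial point — and the step I expect to be the main obstacle — is transporting this back to the original $L^2$-space, since $\|\cdot\|_{h^E_t}$ and $\|\cdot\|=\|\cdot\|_{h^E}$ are not comparable when $\varphi$ is unbounded; this is where the bounded-gradient hypothesis $\sup_X|\partial\varphi|^2_\omega<C$ enters, through the Donnelly--Fefferman device. The pointwise bundle isometry $u\mapsto e^{-t\varphi/2}u$ identifies $L^2(X,E,h^E_t,\omega)$ with $L^2(X,E,h^E,\omega)$ and conjugates $\bar\partial$ into $\bar\partial_t:=\bar\partial+\tfrac t2(\bar\partial\varphi)\wedge(\cdot\,)$, whose zeroth-order part — and its $(h^E,\omega)$-adjoint — have pointwise operator norm at most $\tfrac{|t|}2\sqrt C$; the previous inequality becomes $\|\bar\partial_t u\|^2+\|\bar\partial_t^* u\|^2\geq\epsilon_t\|u\|^2$ on $C^\infty_{c,(r,s)}(X,E)$. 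Estimating $\|\bar\partial_t u\|^2\leq 2\|\bar\partial u\|^2+\tfrac{t^2C}{2}\|u\|^2$ and likewise for the adjoint, I arrive at $\big(|t|\,|r+s-n|-t^2C\big)\|u\|^2\leq 2\big(\|\bar\partial u\|^2+\|\bar\partial^* u\|^2\big)$. The perturbation thus costs an amount of order $t^2$ while the curvature gain is of order $|t|$, so a sufficiently small admissible choice — e.g. $|t|=|r+s-n|/(2C)$ when $C>0$, the case $\partial\varphi\equiv 0$ being immediate — makes the coefficient on the left strictly positive and delivers the a priori estimate. Apart from this transfer bookkeeping, the argument only uses the standard Bochner--Kodaira--Nakano identity and the Gaffney density theorem.
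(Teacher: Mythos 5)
Your proposal is correct and follows essentially the same route as the paper: both twist the metric by $e^{-t\varphi}$ (the paper writes this as $\zeta=e^{\psi/2}\tau$ with $\psi=\pm t\varphi$ and the weighted adjoint $\bar\partial^*_{h^E,\psi}$, which is exactly your conjugation $\bar\partial_t=\bar\partial+\tfrac t2\bar\partial\varphi\wedge(\cdot)$), gain $t(r+s-n)$ from $[\,\omega,\Lambda_\omega]$ while paying $O(t^2C)$ via the bounded-gradient hypothesis, optimize $t$, and conclude from the resulting a priori estimate by completeness and the standard density argument. The only differences are bookkeeping constants (the paper carries an extra parameter $d$ from the arithmetic--geometric mean inequality).
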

\begin{proof}
Let $\tau$ be a smooth $(r,s)$ form on $X$ and $\psi$ be a smooth function on $X$ which will be chosen later. We denote by $ \bar \partial^*_{h^E}$ the Hilbert adjoint of $\bar \partial$ with respect to $(X,E, h^E, \omega)$ and denote by $\bar \partial^*_{h^E, \psi}$ the Hilbert adjoint of $\bar \partial$ with respect to $(X, E, h^Ee^{-\psi}, \omega)$. Let $\zeta=e^{\psi/2} \tau$. Then
\begin{equation}\label{upper basic estimate for (n,1)}
\begin{aligned}
&\int_{X} |\bar \partial \zeta |^2_{h^E, \omega} e^{-\psi} dV_{\omega}  + \int_{X} |\bar \partial^*_{h^E, \psi} \zeta |^2_{h^E, \omega} e^{-\psi} dV_{\omega} \\
&= \int_{X} |e^{-\psi/2} \bar \partial(e^{\psi/2} \tau)|^2_{h^E, \omega} dV_{\omega} + \int_{X} |e^{-\psi/2} \bar \partial^*_{h^E, \psi} (e^{\psi/2} \tau)|^2_{h^E, \omega} dV_{\omega}.
\end{aligned}
\end{equation}
Since
\begin{equation*}
\begin{aligned}
| e^{-\psi/2} \bar \partial (e^{\psi/2} \tau) |^2_{h^E, \omega}
&= \left| \bar \partial \tau + \frac{\bar \partial \psi}{2} \wedge \tau \right|^2_{h^E, \omega} \\
&= |\bar \partial \tau |^2_{h^E, \omega} + 2 \text{Re} \left\langle \bar \partial \tau, \frac{\bar \partial \psi}{2} \wedge \tau \right\rangle_{h^E, \omega} + \bigg | \frac{ \bar \partial \psi}{2} \bigg|^2_{\omega} \big| \tau |^2_{h^E, \omega} \\
& \leq \left(2+\frac{1}{d}\right) |\bar \partial \tau|^2_{h^E,\omega}+ \frac{1+d}{4}| \bar \partial \psi|^2_{\omega} | \tau |^2_{h^E,\omega}
\end{aligned}
\end{equation*}
and
\begin{equation*}
\begin{aligned}
| e^{-\psi/2} \bar \partial^*_{h^E, \psi} (e^{\psi/2} \tau) |^2_{h^E, \omega}
&= \bigg| \bar \partial^*_{h^E} \tau + \frac{\bar \partial \psi}{2} \lrcorner \tau \bigg|^2_{h^E,\omega} \\
&= |\bar \partial^*_{h^E} \tau |^2_{h^E, \omega} + 2 \text{Re} \left\langle \bar \partial^*_{h^E} \tau, \frac{\bar \partial \psi}{2} \lrcorner \tau \right\rangle_{h^E,\omega} + \bigg| \frac{\bar \partial \psi}{2} \lrcorner \tau \bigg|^2_{h^E,\omega} \\
& \leq \left(2+\frac{1}{d}\right) |\bar \partial^*_{h^E} \tau|^2_{h^E, \omega} + \frac{1+d}{4} | \bar \partial \psi |^2_{\omega} |\tau |^2_{h^E, \omega},
\end{aligned}
\end{equation*}
by \eqref{upper basic estimate for (n,1)} we obtain
\begin{equation}\label{upper estimate 1 for (n,0)}
\begin{aligned}
&\int_{X} |\bar \partial \zeta |^2_{h^E, \omega} e^{-\psi} dV_{\omega}  + \int_{X} |\bar \partial^*_{h^E, \psi } \zeta |^2_{h^E, \omega} e^{-\psi} dV_{\omega}\\
&\leq
\left(2+\frac{1}{d}\right) \left( \int_{X}  | \bar \partial \tau |^2_{h^E,\omega} dV_{\omega}+\int_{X}  | \bar \partial^*_{h^E} \tau|^2_{h^E,\omega} dV_{\omega}  \right)
+ \frac{1+d}{2} \int_{X} | \bar \partial \psi |^2_{\omega} |\tau|^2_{h^E, \omega} dV_{\omega}.
\end{aligned}
\end{equation}
On the other hand, by the basic estimate for $(X,E, h^E,\omega)$ (for example, see \cite{Dem})
\begin{equation}\label{lower}
\int_{X} |\bar \partial \zeta |^2_{h^E, \omega } e^{-\psi} dV_{\omega}
+ \int_{X} |\bar \partial^*_{h, \psi} \zeta |^2_{h^E, \omega} e^{-\psi} dV_{\omega}
\geq \int_{X} \langle [\sqrt{-1}\Theta(E) + \sqrt{-1}\partial \bar \partial \psi , \Lambda_{\omega}] \zeta, \zeta \rangle e^{-\psi} dV_{\omega}.
\end{equation}

Now, let $\varphi$ be a K\"ahler potential of $\omega$ and
$C:= \sup_{X} |\partial \varphi|^2_{\omega}<\infty$. If $r+s>n$, then we choose $\psi=  t \varphi$ with a constant $t>0$ which will be chosen later. Then,
\begin{equation*}
\begin{aligned}
&\int_{X} \langle [\sqrt{-1} \Theta(E) + \sqrt{-1} \partial \bar \partial \psi, \Lambda_{\omega}] \zeta, \zeta \rangle e^{-\psi}dV_{\omega} \geq t(r+s-n) \int_{X} |\tau|^2_{h^E, \omega} dV_{\omega} 
\end{aligned}
\end{equation*}
by $\omega= \sqrt{-1}\partial \bar \partial \varphi$. 
Hence by \eqref{upper estimate 1 for (n,0)} and \eqref{lower}, we have
\begin{equation*}
\begin{aligned}
&\left(2+\frac{1}{d}\right) \left( \| \bar \partial \tau \|^2_{h^E, \omega} + \| \bar \partial^*_{h^E} \tau \|^2_{h^E, \omega} \right )
+ \frac{C(1+d)}{2} t^2 \int_{X}  | \tau |^2_{h^E, \omega} dV_{\omega}  \geq t(r+s-n)  \int_{X} | \tau |^2_{h^E, \omega} dV_{\omega}
\end{aligned}
\end{equation*}
which induces
\begin{equation}\label{basic estimate by Kahler potential}
\begin{aligned}
 \| \bar \partial \tau \|^2_{h^E, \omega} + \| \bar \partial^*_{h^E} \tau \|^2_{h^E, \omega} 
& \geq \frac{ t(r+s-n) - \frac{C(1+d)}{2} t^2 }{2+\frac{1}{d}}  \int_{X} | \tau |^2_{h^E, \omega} dV_{\omega} \\
& \geq \frac{d(r+s-n)^2}{2C(1+d)(2d+1)} \int_{X} | \tau |^2_{h^E, \omega} dV_{\omega}
\end{aligned}
\end{equation}
by letting $t = \frac{r+s-n}{C(1+d)}$.

Since $\tau$ is an arbitrary chosen smooth $(r,s)$ form, by \eqref{basic estimate by Kahler potential}, the proposition is proved when $r+s>n$ by following the proof of Theorem 4.5 in Chapter VIII of \cite{Dem}. 

If $r+s<n$, then we choose $\psi=-t \varphi$ with a constant $t>0$ which will be chosen later. Then by a similar argument, we obtain the same conclusion. 
\end{proof}

Let $(w_1, \cdots, w_n)$ be a local coordinate system and $\{ e_k \}$ be a local orthonormal frame for $(E,h^E)$. Then the Chern curvature tensor $\Theta(E)$ is of the form
$$
\Theta(E) = \sum_{j,k,\lambda,\mu} c_{jk \lambda \mu} dw_j \wedge d \overline w_k \otimes e_\lambda^* \otimes e_\mu.
$$
We say $E$ is of Nakano positive (resp. Nakano semi-positive)  if 
$$
\sum_{jk\lambda \mu} c_{jk\lambda\mu} \tau_{j \lambda} \overline{\tau}_{k \mu} >0 \quad (\text{resp.} \geq 0)
$$
for any non-zero vector $\tau = \sum_{j,\lambda} \tau_{j \lambda} \frac{\partial}{\partial w_j} \otimes e_{\mu}$.

\begin{corollary}\label{basic estimate by Kahler potential 2}
Let $(X,\omega)$ be a $n$-dimensional complete K\"ahler manifold and $(E,h^E)$ be a holomorphic vector bundle with a smooth hermitian metric $h^E$. If $(X,\omega)$ has a K\"ahler potential $\varphi$ such that $\sup_X|\partial \varphi |^2_{\omega} < C$ for a constant $C>0$ and $E$ is Nakano semi-positive, then
$$
H^{n,s}_{L^2, \bar \partial} (X,E, h^E,\omega)=0 \;\; \text{for all} \;\; s \geq 1 .
$$
\end{corollary}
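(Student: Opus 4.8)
The plan is to obtain this as an immediate specialization of Proposition~\ref{vanishing by kahler potential} with $r=n$. The proposition already vanishes $H^{r,s}_{L^2,\bar\partial}(X,E,h^E,\omega)$ whenever $r+s\neq n$, under the two hypotheses that $(X,\omega)$ carries a K\"ahler potential $\varphi$ with $\sup_X|\partial\varphi|^2_\omega<\infty$ and that the curvature commutator $[\sqrt{-1}\Theta(E),\Lambda_\omega]$ is positive semi-definite in the relevant bidegree. The first hypothesis is assumed verbatim in the corollary, so the only thing to check is that Nakano semi-positivity of $E$ supplies the second hypothesis in bidegree $(n,s)$.

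To verify this I would recall the standard pointwise expression for the curvature operator on $E$-valued $(n,s)$-forms. Working at a fixed point in a local coordinate system $(w_1,\dots,w_n)$ in which $\omega$ is the identity and in a local orthonormal frame $\{e_\lambda\}$ of $E$, write an $E$-valued $(n,s)$-form as $u=\sum_{|S|=s,\lambda} u_{S,\lambda}\,dw_1\wedge\cdots\wedge dw_n\wedge d\overline w_S\otimes e_\lambda$. A computation as in \cite[Chapter VII]{Dem} gives
\begin{equation*}
\big\langle [\sqrt{-1}\Theta(E),\Lambda_\omega]\,u,u\big\rangle_{h^E,\omega}
=\sum_{|R|=s-1}\ \sum_{j,k,\lambda,\mu} c_{jk\lambda\mu}\, u_{jR,\lambda}\,\overline{u_{kR,\mu}}.
\end{equation*}
For each fixed multi-index $R$ of length $s-1$, the inner sum is of the form $\sum_{j,k,\lambda,\mu} c_{jk\lambda\mu}\tau_{j\lambda}\overline{\tau_{k\mu}}$ with $\tau_{j\lambda}=u_{jR,\lambda}$, which is $\geq 0$ precisely because $E$ is Nakano semi-positive. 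Summing over $R$ shows $[\sqrt{-1}\Theta(E),\Lambda_\omega]\geq 0$ in bidegree $(n,s)$ for every $s$.

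With this in hand I would simply apply Proposition~\ref{vanishing by kahler potential} with $(r,s)=(n,s)$: for $s\geq 1$ we have $r+s=n+s>n$, hence $r+s\neq n$, and the conclusion $H^{n,s}_{L^2,\bar\partial}(X,E,h^E,\omega)=0$ follows (for $s>n$ the statement is vacuous since $\Lambda^{n,s}T_X^*=0$). The only genuinely delicate point — and it is really bookkeeping rather than an obstacle — is to keep track of bidegrees: Nakano semi-positivity controls the curvature commutator only on $(n,s)$-forms, and says nothing in general about other $r$, which is exactly why the statement is restricted to $r=n$ and why the $\psi=t\varphi$ (rather than $\psi=-t\varphi$) branch of the proof of Proposition~\ref{vanishing by kahler potential} is the one being invoked here.
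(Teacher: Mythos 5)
Your proposal is correct and follows essentially the same route as the paper: the paper's proof likewise reduces the corollary to Proposition~\ref{vanishing by kahler potential} by citing the standard fact (from \cite[Chapter VII]{Dem}) that Nakano semi-positivity yields $[\sqrt{-1}\Theta(E),\Lambda_\omega]\geq 0$ in bidegree $(n,s)$, a fact you verify explicitly with the pointwise curvature formula. Your additional remarks on the bidegree bookkeeping and the choice of the $\psi=t\varphi$ branch are accurate but not needed beyond what the paper records.
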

\begin{proof}
It is known that the Nakano semi-positivity of $E$ implies $[\sqrt{-1} \Theta(E), \Lambda_{\omega} ] \geq 0$ 
in bidegree $(n,s)$ (for example, see the proof of  \cite[Chapter VII, Lemma 7.2]{Dem}). Therefore, the corollary follows by Proposition \ref{vanishing by kahler potential}.
\end{proof}

\section{Ball quotient type metric}\label{metric description}
We say that two riemannian metrics $g$ and $g'$ are {\it quasi-isometric}, if there exist two positive constants $C_1$, $C_2$ such that
$$
C_1 g \leq g' \leq C_2 g
$$
and we denote by $g \sim g'$ if $g$ and $g'$ are quasi-isometric. 

The lemmas in this subsection are influenced by the idea used in \cite{Fu92}. Especially, the proof of Lemma~\ref{quasi-isometric} uses similar method exploited in Lemma 2.3 of \cite{Sa85}. 

\begin{lemma}\label{quasi-isometric} 
Let $\Gamma$ be a torsion-free lattice of $\text{Aut}(\mathbb{B}^n)$ with only unipotent parabolic automorphisms and let $\Sigma= \mathbb{B}^n/\Gamma$ be a ball quotient with finite volume. Let $b$ be a cusp of $\Gamma$. Then for any point $p \in D_b$, there exists a positive constant $\epsilon$ satisfying
\begin{enumerate}
\item $p=(0,\cdots,0) \in \mathbb{D}^{n} (\epsilon) \subset \subset \Omega_{b}^{(N)}$. 
\item the induced K\"ahler form $\omega_{\Omega_{b}^{(N)}}$ is quasi-isometric to
a K\"ahler form 
\begin{equation}\nonumber
\widetilde \omega_{\Omega_b^{(N)}} := \sum_{k=1}^{n-1} \frac{dw_k \wedge d \bar w_k}{(-\log \| w \|)} + \frac{dw_n \wedge d \bar w_n}{\|w \|^2 (-\log \| w \|)^2},
\end{equation}
on $\mathbb{D}^{n-1} (\epsilon) \times \mathbb{D}^* (\epsilon) $, where $(w_1, \cdots, w_n)$ are the euclidean coordinates.
\end{enumerate}

\end{lemma}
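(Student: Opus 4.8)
The plan is to reduce the statement to an explicit comparison of two positive $(1,1)$-forms on a polydisc, using the K\"ahler potential $\rho(w) = -\log(-\log\|w\|)$ recalled in the preliminaries. First I would fix $p \in D_b$; since $\Omega_b^{(N)}$ is a tubular neighborhood of the line bundle $L \to D_b$ with $\overline\mu$-length $<e^{-2\pi N/\tau}$, a neighborhood of $p$ in $\overline\Sigma$ is biholomorphic, in the euclidean coordinates $(w',w_n)$ from \eqref{covering}, to a set of the form $\{|w_n|^2 < e^{-(4\pi/\tau)|w'|^2 - (4\pi/\tau)N}\}$; choosing $\epsilon>0$ small enough gives $\mathbb D^{n}(\epsilon)\subset\subset\Omega_b^{(N)}$ and establishes (1). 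On $\mathbb D^{n-1}(\epsilon)\times\mathbb D^*(\epsilon)$ we then have $\omega_{\Omega_b^{(N)}} = \sqrt{-1}\,\partial\bar\partial\,\rho(w)$, so everything becomes a Hessian computation.

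The core step is to compute $\sqrt{-1}\,\partial\bar\partial\,(-\log(-\log\|w\|))$ explicitly, writing $\|w\|^2 = e^{(4\pi/\tau)|w'|^2}|w_n|^2$ so that $\log\|w\| = \frac{2\pi}{\tau}|w'|^2 + \log|w_n|$, and then expand $\partial\bar\partial$ of $-\log(-\log\|w\|)$ by the chain rule. Setting $L := -\log\|w\| > 0$ (which is comparable to $-\log\|w\|$ by definition and tends to $+\infty$ as $w\to D_b$), one gets a term $\frac{1}{L}\partial\bar\partial(-\log\|w\|) + \frac{1}{L^2}\partial(-\log\|w\|)\wedge\bar\partial(-\log\|w\|)$ up to signs; here $\partial\bar\partial\log\|w\| = \frac{4\pi}{\tau}\sum_{k=1}^{n-1}dw_k\wedge d\bar w_k$ (the $w_n$-part is pluriharmonic), and $\partial\log\|w\| = \frac{2\pi}{\tau}\sum_k \bar w_k\,dw_k + \frac{dw_n}{w_n}$. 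Substituting, the leading behavior of $\omega_{\Omega_b^{(N)}}$ is $\frac{c}{L}\sum_{k=1}^{n-1}dw_k\wedge d\bar w_k + \frac{1}{L^2}\bigl|\tfrac{2\pi}{\tau}\sum_k\bar w_k\,dw_k + \tfrac{dw_n}{w_n}\bigr|^2$ plus lower-order cross terms, which I would then bound above and below by $\widetilde\omega_{\Omega_b^{(N)}}$.

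The comparison itself is where the care lies. On $\mathbb D^{n-1}(\epsilon)$ the factor $|w'|$ is bounded, so $\frac{1}{L^2}$ times the $w'$-cross-terms and the $|w'|^2/L^2$ coefficient of $dw_k\wedge d\bar w_k$ are dominated, after shrinking $\epsilon$, by the $\frac{c}{L}dw_k\wedge d\bar w_k$ term (since $1/L \gg 1/L^2$); and the mixed term $\frac{1}{L^2 \bar w_n}\,dw_k\wedge d\bar w_n$ is controlled by Cauchy--Schwarz against $\frac{1}{L}dw_k\wedge d\bar w_k$ and $\frac{1}{L^2|w_n|^2}dw_n\wedge d\bar w_n$, again after shrinking $\epsilon$ using $1/(L|w_n|)\cdot|w'| \le $ (bounded)$\cdot 1/|w_n|$ absorbed into the geometric-mean bound. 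Thus both $\omega_{\Omega_b^{(N)}} \le C_2\,\widetilde\omega_{\Omega_b^{(N)}}$ and $\omega_{\Omega_b^{(N)}} \ge C_1\,\widetilde\omega_{\Omega_b^{(N)}}$ hold on $\mathbb D^{n-1}(\epsilon)\times\mathbb D^*(\epsilon)$ for suitable constants. I expect the main obstacle to be precisely this absorption of the cross terms: one must check that the off-diagonal entries of the Hessian matrix are genuinely lower order relative to the diagonal model $\mathrm{diag}(1/L,\dots,1/L,1/(\|w\|^2 L^2))$ uniformly as $w\to D_b$, which forces the shrinking of $\epsilon$ and requires keeping track of how powers of $|w'|$, $|w_n|$ and $L$ combine — a bounded but slightly delicate linear-algebra estimate on a $2\times 2$-block (the $w'$-block and the $w_n$-entry, coupled through the first column of $w'$-derivatives of $\log|w_n|$).
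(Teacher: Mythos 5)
Your proposal is correct and follows essentially the same route as the paper: compute $\sqrt{-1}\,\partial\bar\partial(-\log(-\log\|w\|))$ explicitly via the chain rule, identify the diagonal model $\sum_{k<n}L^{-1}dw_k\wedge d\bar w_k + \|w\|^{-2}L^{-2}dw_n\wedge d\bar w_n$, and absorb the off-diagonal terms using their extra decay in $(-\log\|w\|)^{-1/2}$ (the paper organizes this absorption by passing to the frame $d\zeta_k$ in which the cross coefficients are visibly $O((-\log\|w\|)^{-1/2})$, which is your Cauchy--Schwarz step in different clothing). The minor constant discrepancies in your expansion (e.g.\ $4\pi/\tau$ vs.\ $2\pi/\tau$ and the factor $\tfrac12$ in $\partial\log|w_n|$) are immaterial for quasi-isometry.
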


\begin{proof}
By 
$$
\partial \bar \partial \| w \| = \frac{2\pi}{\tau} \| w \| \partial \bar \partial |w'|^2 + \frac{\partial \|w \| \wedge \bar \partial \| w \|}{\|w \|},
$$
it follows that
\begin{equation*}
\begin{aligned}
\omega_{\Omega_b^{(N)}} &= \sqrt{-1} \partial \bar \partial (- \log (-\log \| w \|)) \\
&= \frac{1+\log \|w \|}{\|w\|^2 (-\log \|w \|)^2} \sqrt{-1} \partial \| w \| \wedge \bar \partial \| w \| + \frac{1}{\| w \| (-\log \| w \|)} \sqrt{-1} \partial \bar \partial \| w \| \\
&=\frac{1}{\| w \|^2 (- \log \| w \|)^2} \sqrt{-1} \partial \| w \| \wedge \bar \partial \| w \| + \frac{2 \pi} {\tau} \frac{1}{(-\log \|w \|)} \sqrt{-1} \partial \bar \partial | w' |^2.
\end{aligned}
\end{equation*}
Since 
$$
\partial \| w \|  = \frac{2\pi}{\tau} e^{\frac{2\pi}{\tau} |w'|^2} |w_n| \partial |w'|^2 + e^{\frac{2\pi}{\tau} |w'|^2} \partial |w_n| = \frac{2\pi}{\tau} \| w \| \partial |w'|^2 + e^{\frac{2\pi}{\tau} |w'|^2} \partial |w_n|,
$$
we have
\begin{equation*}
\begin{aligned}
\frac{\partial \| w \| \wedge \bar \partial \| w \| }{ \|w \|^2 (-\log \| w \|)^2} &= \frac{ \big(\frac{2\pi}{\tau} \big)^2  \partial |w'|^2 \wedge \bar \partial |w'|^2}{(-\log \|w \|)^2} + \frac{\frac{2\pi}{\tau} e^{\frac{2 \pi}{\tau} |w'|^2 }}{ \|w \| (-\log \| w \|)^2}  \partial |w'|^2 \wedge \bar \partial |w_n| \\
&\quad + \frac{\frac{2\pi}{\tau} e^{\frac{2 \pi}{\tau} |w'|^2 }}{\| w \| (-\log \| w \|)^2} \partial |w_n | \wedge \bar \partial |w'|^2  + \frac{e^{\frac{4\pi}{\tau} |w'|^2 }}{\| w \|^2 (-\log \| w \|)^2} \partial |w_n| \wedge \bar \partial |w_n|\\
&= \frac{ \big(\frac{2\pi}{\tau} \big)^2  \partial |w'|^2 \wedge \bar \partial |w'|^2}{(-\log \|w \|)^2} + \frac{1}{2} \sum_{k=1}^{n-1} \frac{\frac{2\pi}{\tau} \bar w_k w_n e^{\frac{2 \pi}{\tau} |w'|^2 }}{ |w_n| \|w \| (-\log \| w \|)^2}  dw_k \wedge d \bar w_n \, \\
&\quad+ \frac{1}{2} \sum_{k=1}^{n-1}  \frac{\frac{2\pi}{\tau}  w_k \bar w_n e^{\frac{2 \pi}{\tau} |w'|^2 }}{|w_n| \| w \| (-\log \| w \|)^2} d w_n \wedge d \bar w_k + \frac{e^{\frac{4\pi}{\tau} |w'|^2 }}{\| w \|^2 (-\log \| w \|)^2} \partial |w_n| \wedge \bar \partial |w_n|.
\end{aligned}
\end{equation*}
Therefore,
\begin{equation*}
\begin{aligned}
\omega_{\Omega_b^{(N)}} &= \sum_{k=1}^{n-1} \left(\frac{2\pi}{\tau} \frac{1}{(-\log \| w \|)} + \left(\frac{2\pi}{\tau} \right)^2 \frac{ { |w_k|^2 } }{(-\log \|w \|)^2} \right) dw_k \wedge d \bar w_k\\
&\quad+ {\frac{\left(\frac{2\pi}{\tau}\right)^2}{(-\log\|w\|)^2}}{ \sum_{1\leq k\neq j\leq n-1} \bar w_k w_j dw_k \wedge d \bar w_j } \\
&\quad + \frac{1}{2} \sum_{k=1}^{n-1} \frac{\frac{2\pi}{\tau} \bar w_k w_n e^{\frac{2 \pi}{\tau} |w'|^2 }}{ |w_n| \|w \| (-\log \| w \|)^2}  dw_k \wedge d \bar w_n  +  \frac{1}{2} \sum_{k=1}^{n-1}  \frac{\frac{2\pi}{\tau}  w_k \bar w_n e^{\frac{2 \pi}{\tau} |w'|^2 }}{|w_n| \| w \| (-\log \| w \|)^2} d w_n \wedge d \bar w_k \\
&\quad + \frac{1}{4} \frac{e^{\frac{4\pi}{\tau} |w'|^2 }}{ \| w \|^2 (-\log \| w \|)^2} d w_n \wedge d \bar w_n.
\end{aligned}
\end{equation*}

Now, we consider a frame   
\begin{equation*}
d \zeta_k := \frac{\sqrt{\frac{2 \pi}{\tau}}  d \omega_k}{\sqrt{-\log \| w \|}} \quad \text{for $k=1,\cdots, n-1$},  \quad d\zeta_n :=  \frac{1}{2} \frac{  e^{\frac{2\pi}{\tau} |w'|^2 } d w_n }{  \|w \| (-\log \|w \|)}
\end{equation*}
for $T_{\Sigma}^*$ on $\mathbb{D}^{n-1}(\epsilon) \times \mathbb{D}^* (\epsilon)$.
Then
\begin{equation*}
\begin{aligned}
\omega_{\Omega_b^{(N)}}  = &\sum_{k=1}^{n-1} \big( 1 + O({ | w_k |^2 } (-\log \| w \|)^{-1})  \big) d\zeta_k \wedge d \bar \zeta_k +  d\zeta_n \wedge d \bar \zeta_n \\
&+ {\sum_{1\leq k\neq j<n-1} O (\bar w_k w_j (-\log \|w \|)^{-1} ) d \zeta_k \wedge d \bar \zeta_j    }\\
&+ \sum_{k=1}^{n-1} O({\overline w_k}(-\log \| w \|)^{-\frac{1}{2} })  (d\zeta_k \wedge d \bar \zeta_n + d \zeta_n \wedge d \bar \zeta_k).
\end{aligned}
\end{equation*}
Since $ (-\log \|w \|)^{-1}$ and $ (-\log \|w \|)^{-\frac{1}{2} }$ converge to the zero as $\| w \| \sim |w_n| \rightarrow 0$, the lemma is proved.
\end{proof}

From now on, for simplicity, we write $\widetilde \omega_{\Omega_b^{(N)}}$ as $\widetilde \omega$. 

\begin{lemma}\label{curvature estimate 1} 
Let $\Gamma$ be a torsion-free lattice of $\text{Aut}(\mathbb{B}^n)$ with only unipotent parabolic automorphisms and let $\Sigma= \mathbb{B}^n/\Gamma$ be a ball quotient with finite volume. Let $b$ be a cusp of $\Gamma$. Let $(E,h^E)$ be a holomorphic vector bundle on $\overline \Sigma$ with a smooth hermitian metric $h^E$ on $\Sigma$. Then, for every point $p \in D_{b}$, there exists an open neighborhood $\mathbb{D}^n (\epsilon)$ near $p$ such that $p=(0,\cdots,0) \in \mathbb{D}^n (\epsilon)  \subset \subset \Omega_{b}^{(N)} $ and the inequality 
\begin{equation}\nonumber
\langle [ \sqrt{-1} \partial \bar \partial |w'|^2, \Lambda_{\widetilde \omega} ] \zeta, \zeta \rangle_{h^E, \, \widetilde \omega} \geq  (-\log \| w \| ) (\ell-1) |\zeta |^2_{h^E, \, \widetilde \omega}
\end{equation}
is satisfied for every point $q \in \mathbb{D}^{n-1} (\epsilon) \times \mathbb{D}^* (\epsilon)$ and for any $\zeta \in (\Lambda^{n,\ell} T_{\Sigma}^* \otimes E)_{q}$, where $w'=(w_1, \cdots, w_{n-1})$ are the euclidean coordinates on $\mathbb{D}^n(\epsilon)$. 
\end{lemma}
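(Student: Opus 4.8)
The plan is to reduce the asserted inequality to a pointwise linear-algebra computation at the point $q$ and then quote the standard Bochner--Kodaira commutation identity for the operator $[\sqrt{-1}\alpha,\Lambda_{\widetilde\omega}]$ on forms of bidegree $(n,\ell)$. First I would take $\epsilon$ and the polydisc $\mathbb{D}^n(\epsilon)\subset\subset\Omega_b^{(N)}$ from Lemma~\ref{quasi-isometric}, shrinking $\epsilon$ if needed so that $\|w\|<1$, hence $-\log\|w\|>0$, throughout $\mathbb{D}^{n-1}(\epsilon)\times\mathbb{D}^*(\epsilon)$. Setting $\alpha:=\sqrt{-1}\,\partial\bar\partial|w'|^2=\sqrt{-1}\sum_{k=1}^{n-1}dw_k\wedge d\bar w_k$, I would observe that the commutator $[\alpha,\Lambda_{\widetilde\omega}]$ acts only on the $\Lambda^{n,\ell}T^*_\Sigma$-factor of $\Lambda^{n,\ell}T^*_\Sigma\otimes E$ and as the identity on $E$, so it suffices to bound its eigenvalues on $\Lambda^{n,\ell}T^*_\Sigma$; the resulting estimate then transfers verbatim to $E$-valued forms, with $|\cdot|_{h^E,\widetilde\omega}$ replacing $|\cdot|_{\widetilde\omega}$.

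The key computation is the simultaneous diagonalization of $\alpha$ and $\widetilde\omega$ at $q$. By its very definition $\widetilde\omega$ is diagonal in the coordinate coframe $\{dw_1,\dots,dw_n\}$, with coefficient $(-\log\|w\|)^{-1}$ on $dw_k\wedge d\bar w_k$ for $k\le n-1$ and $\|w\|^{-2}(-\log\|w\|)^{-2}$ on $dw_n\wedge d\bar w_n$. Diagonally rescaling the $dw_k$ to a $\widetilde\omega$-orthonormal coframe $\{\theta_k\}$, the form $\alpha$ becomes simultaneously diagonal,
$$\alpha=\sqrt{-1}\sum_{k=1}^{n}\gamma_k\,\theta_k\wedge\bar\theta_k,\qquad \gamma_1=\cdots=\gamma_{n-1}=-\log\|w\|,\quad \gamma_n=0,$$
the vanishing of $\gamma_n$ reflecting that $\alpha$ has no component in the $dw_n$-direction. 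All $\gamma_k$ are $\ge 0$ on the neighbourhood in question.

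Finally I would apply the pointwise commutation formula (Chapter VII of \cite{Dem}): writing $\zeta=\sum_{|J|=\ell}(\theta_1\wedge\cdots\wedge\theta_n)\wedge\bar\theta_J\otimes\zeta_J$ with $\zeta_J\in E_q$, one gets
$$\big\langle[\alpha,\Lambda_{\widetilde\omega}]\zeta,\zeta\big\rangle_{h^E,\widetilde\omega}=\sum_{|J|=\ell}\Big(\sum_{j\in J}\gamma_j\Big)|\zeta_J|^2_{h^E},$$
the term $\sum_{i\in I}\gamma_i-\sum_k\gamma_k$ in the general formula dropping out because the holomorphic degree is maximal ($|I|=n$). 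Since a length-$\ell$ multi-index $J$ contains the index $n$ at most once, $\sum_{j\in J}\gamma_j\ge(\ell-1)(-\log\|w\|)$ in every case, with equality exactly when $n\in J$; summing over $J$ and using $\sum_{|J|=\ell}|\zeta_J|^2_{h^E}=|\zeta|^2_{h^E,\widetilde\omega}$ gives the claim. I do not expect a genuine obstacle here: the whole argument is pointwise and elementary. The only points deserving care are verifying that $\widetilde\omega$ and $\alpha$ are honestly simultaneously diagonal in the coordinate coframe, which is immediate from the explicit formulas, and the bookkeeping in the commutation identity that isolates the coefficient $\sum_{j\in J}\gamma_j$ for each $J$.
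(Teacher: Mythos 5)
Your proposal is correct and follows essentially the same route as the paper: both pass to a $\widetilde\omega$-orthonormal coframe obtained by rescaling $dw_1,\dots,dw_n$ (the paper's $d\tau_k$), observe that $\sqrt{-1}\partial\bar\partial|w'|^2$ is simultaneously diagonal with eigenvalues $-\log\|w\|$ (for $k<n$) and $0$ (for $k=n$), and apply the standard commutation formula for $[\alpha,\Lambda_{\widetilde\omega}]$ in bidegree $(n,\ell)$, where the maximal holomorphic degree cancels the $-\sum_k\gamma_k$ term and leaves $\sum_{j\in J}\gamma_j\ge(\ell-1)(-\log\|w\|)$. Your write-up actually spells out the eigenvalue bookkeeping more explicitly than the paper does, but there is no substantive difference.
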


\begin{proof}
To prove the lemma, fix a point $p \in D_{b}$ and take a sufficiently small $\epsilon >0$ such that $p=(0,0,\cdots, 0) \in \mathbb{D}^{n}( \epsilon) \subset \subset \Omega_{b}^{(N)}$.  Take a point $q=(q_1, \cdots, q_n) \in \mathbb{D}^{n-1} (\epsilon) \times \mathbb{D}^* (\epsilon)$, and take a holomorphic frame $\{e_{\alpha} \}$ of $E$ near $q$.

Now, we define a holomorphic frame $\{d \tau_k \}$ for $T_{\Sigma}^*$ on $\mathbb{D}^{n-1}(\epsilon) \times \mathbb{D}^* (\epsilon)$ by 
$$
d \tau_k := \frac{d w_k}{\sqrt{(-\log \| q \|)}}, \quad k=1,\cdots,n-1 \quad \text{and} \quad  d \tau_n := \frac{d w_n}{ \| q \| (-\log \| q\|)}
$$
where $ \| q \| = e^{\frac{2\pi}{\tau} \sum_{k=1}^{n-1} |q_k |^2} |q_n|$. Under the frame, one can write $\widetilde \omega$ at $q$ as
\begin{equation}\label{simplified 1}
\begin{aligned}
\widetilde \omega_q &= \sum_{k=1}^{n-1} \frac{dw_k \wedge d \overline w_k}{(-\log \| q \|)} + \frac{dw_n \wedge d \overline w_n}{\|q \|^2 (-\log \| q \|)^2} \\
&= \sum_{k=1}^{n-1} d \tau_k \wedge d \overline \tau_k + d \tau_n \wedge d \overline \tau_n.
\end{aligned}
\end{equation}
Therefore,
\begin{equation}\label{simplified 2}
(i \partial \bar \partial |w'|^2)_{q} = \sqrt{-1}\sum_{k=1}^{n-1} dw_k \wedge d \bar w_k = \sqrt{-1}\sum_{k=1}^{n-1} (- \log \| q \| ) d \tau_k \wedge d \bar \tau_k.
\end{equation}
Since we can write 
$$
\zeta = \sum_{|I|=n,|J|=\ell, \alpha=1}^{n} u_{IJ\alpha} \, d\tau_{I} \wedge d \overline \tau_{J} \otimes e_{\alpha} \in (\Lambda^{n,\ell} T_{\Sigma}^* \otimes E)_{q},
$$
\eqref{simplified 1} and \eqref{simplified 2} imply that 
$$
\langle [ \sqrt{-1} \partial \bar \partial |w'|^2, \Lambda_{\widetilde \omega}] \zeta, \zeta \rangle_{h^E , \widetilde \omega} \geq (-\log \| w \|) (\ell - 1) | \zeta |^2_{h^E , \widetilde \omega}
$$
at $q$. Since $q$ can be chosen arbitrarily in $\mathbb{D}^{n-1}(\epsilon) \times \mathbb{D}^*(\epsilon)$, the proof is completed.
\end{proof}

\begin{lemma}\label{comparison between norms}

Let $\Gamma$ be a torsion-free lattice of $\text{Aut}(\mathbb{B}^n)$ with only unipotent parabolic automorphisms and let $\Sigma= \mathbb{B}^n/\Gamma$ be a ball quotient with finite volume. Let $b$ be a cusp of $\Gamma$.
For each point $p \in D_b$, let $\mathbb{D}^n (\epsilon)$ be an open neighborhood of $p$ in Lemma~\ref{quasi-isometric} so that there exist two positive constants $C_1$ and $C_2$ satisfying 
$$
C_1 \widetilde \omega \leq \omega \leq C_2 \widetilde \omega
$$
on $\mathbb{D}^{n-1}(\epsilon) \times \mathbb{D}^*(\epsilon)$.
Let $(E,h^E)$ be a holomorphic vector bundle on $\overline \Sigma$ with a smooth hermitian metric $h^E$ on $\Sigma$.

Then for each point $q \in \mathbb D^{n-1}(\epsilon)\times\mathbb D^*(\epsilon)$ and a holomorphic coordinate system $(z_1, \cdots, z_n)$ near $q$ satisfying
$$
\omega = \sum_{j=1}^{n} dz_j \wedge d \bar z_j, \quad \widetilde \omega = \sum_{j=1}^{n} \gamma_j dz_j \wedge d \bar z_j \quad \text{at} ~q
$$
with $\gamma_j>0$ for all $j$ and an orthonormal frame $\{e_\lambda \}$ of $E$ near $q$, the following inequalities hold:
$$
\frac{1}{C_2^n} \langle [\sqrt{-1} \Theta(E), \Lambda_{\omega}] S_{\gamma} u, S_{\gamma} u \rangle_{h^E, \omega}
\leq  \langle [\sqrt{-1} \Theta(E), \Lambda_{\widetilde \omega}] u ,  u \rangle_{h^E, \widetilde \omega} \leq \frac{1}{C_1^n} \langle [\sqrt{-1} \Theta(E), \Lambda_{\omega}]  S_{\gamma} u, S_{\gamma} u \rangle_{h^E, \omega}
$$
$$
C_1^{2( n +\ell )} |u|^2_{h^E, \omega} \leq |S_{\gamma} u |^2_{h^E, \omega} \leq C_2^{2( n +\ell )} |u|^2_{h^E, \omega} 
$$
for any 
$$
u = \sum_{|K|=\ell, \lambda} u_{K \lambda} dz_1 \wedge \cdots \wedge dz_n \wedge d \bar z_K \otimes e_\lambda \in (\Lambda^{n,\ell} T_{\Sigma}^* \otimes E)_q
$$
where 
$$
S_{\gamma} u := \sum_{|K|=\ell, \lambda} {(\gamma_1 \cdots \gamma_n \gamma_{K})}^{-1} u_{K \lambda} dz_1 \wedge \cdots \wedge dz_n \wedge d \bar z_K \otimes e_{\lambda}
$$
which is an invertible linear endomorphism on $(\Lambda^{n,\ell} T_{\Sigma}^*  \otimes E)_q$, and $
\gamma_K = \prod_{j \in K} \gamma_j$.
\end{lemma}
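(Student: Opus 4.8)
The plan is to verify both chains of inequalities pointwise at $q$, using the simultaneous diagonalization of $\omega$ and $\widetilde\omega$ together with the pointwise Bochner--Kodaira--Nakano formula for the curvature operator on $(n,\ell)$-forms (cf. \cite[Chapter VII]{Dem}). The only place the quasi-isometry enters is the elementary observation that, since $\omega=\sum_j dz_j\wedge d\bar z_j$ and $\widetilde\omega=\sum_j\gamma_j\,dz_j\wedge d\bar z_j$ at $q$, the hypothesis $C_1\widetilde\omega\le\omega\le C_2\widetilde\omega$ forces $\tfrac1{C_2}\le\gamma_j\le\tfrac1{C_1}$ for every $j$. Invertibility of $S_\gamma$ on $(\Lambda^{n,\ell}T_\Sigma^*\otimes E)_q$ is immediate, since it only rescales each basis coefficient by the nonzero scalar $(\gamma_1\cdots\gamma_n\gamma_K)^{-1}$.

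The norm comparison is the easy half. At $q$ the frame $\{dz_1\wedge\cdots\wedge dz_n\wedge d\bar z_K\otimes e_\lambda\}$ is orthonormal for $(h^E,\omega)$, so $|u|^2_{h^E,\omega}=\sum_{|K|=\ell,\lambda}|u_{K\lambda}|^2$ while $|S_\gamma u|^2_{h^E,\omega}=\sum_{|K|=\ell,\lambda}(\gamma_1\cdots\gamma_n\gamma_K)^{-2}|u_{K\lambda}|^2$. As $\gamma_1\cdots\gamma_n\gamma_K$ is a product of $n+\ell$ numbers lying in $[\tfrac1{C_2},\tfrac1{C_1}]$, each coefficient $(\gamma_1\cdots\gamma_n\gamma_K)^{-2}$ lies in $[C_1^{2(n+\ell)},C_2^{2(n+\ell)}]$, and comparing the two sums termwise gives the claim.

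For the curvature comparison I expand $\langle[\sqrt{-1}\Theta(E),\Lambda_{\widetilde\omega}]u,u\rangle_{h^E,\widetilde\omega}$ at $q$ in these coordinates. Writing $\Theta(E)=\sum c_{jk\lambda\mu}\,dz_j\wedge d\bar z_k\otimes e_\lambda^*\otimes e_\mu$ and grouping the terms of the Nakano formula according to the $(\ell-1)$-multi-index $S$ obtained by deleting one antiholomorphic index, one obtains
\[
\langle[\sqrt{-1}\Theta(E),\Lambda_{\widetilde\omega}]u,u\rangle_{h^E,\widetilde\omega}
=\sum_{|S|=\ell-1}(\gamma_1\cdots\gamma_n)\,\gamma_S\,Q_S(u),
\]
where $\gamma_S=\prod_{j\in S}\gamma_j$ and each $Q_S(u)=\sum_{j,k,\lambda,\mu}c_{jk\lambda\mu}\,w^S_{j\lambda}\,\overline{w^S_{k\mu}}$ is the Hermitian form in the curvature coefficients with $w^S_{j\lambda}=\pm\,u_{(jS),\lambda}/((\gamma_1\cdots\gamma_n)\gamma_j\gamma_S)$. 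The normalization $(\gamma_1\cdots\gamma_n\gamma_K)^{-1}$ defining $S_\gamma$ is tailored precisely so that the analogous expansion for $S_\gamma u$ with respect to the standard metric $\omega$ reads $\langle[\sqrt{-1}\Theta(E),\Lambda_\omega]S_\gamma u,S_\gamma u\rangle_{h^E,\omega}=\sum_{|S|=\ell-1}Q_S(u)$, with the same blocks $Q_S$; this is a direct side-by-side comparison of the two pointwise formulas. Since each scalar weight $(\gamma_1\cdots\gamma_n)\gamma_S$ is again a product of numbers in $[\tfrac1{C_2},\tfrac1{C_1}]$, hence pinched between fixed powers of $C_1^{-1}$ and $C_2^{-1}$, comparing the two sums block by block yields the asserted inequalities. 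Finally, as $q\in\mathbb D^{n-1}(\epsilon)\times\mathbb D^*(\epsilon)$ was arbitrary, the pointwise statement is the lemma.

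The main obstacle is partly organizational --- one must push the eigenvalues $\gamma_j$ through the Bochner--Kodaira--Nakano formula so that they collect into exactly the scalar weights $(\gamma_1\cdots\gamma_n)\gamma_S$ that the definition of $S_\gamma$ cancels --- and partly the fact that $[\sqrt{-1}\Theta(E),\Lambda_\omega]$ is not sign-definite for a general $(E,h^E)$, so that the block-by-block comparison delivers a genuine two-sided estimate in the stated direction precisely when the blocks $Q_S$ share a sign; this is the situation (Nakano semi-positivity of the relevant twisted bundle) in which this comparison is put to use.
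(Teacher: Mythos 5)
Your proof follows essentially the same route as the paper's: diagonalize $\omega$ and $\widetilde\omega$ simultaneously at $q$, note $1/C_2\le\gamma_j\le 1/C_1$, expand $[\sqrt{-1}\Theta(E),\Lambda]$ via Demailly's pointwise formula, and compare the two expressions block by block, with the normalization in $S_\gamma$ cancelling the eigenvalue weights exactly as you describe. The caveat you flag at the end --- that comparing $\sum_S(\gamma_1\cdots\gamma_n)\gamma_S\,Q_S$ with $\sum_S Q_S$ gives the stated two-sided inequality only when the Hermitian blocks $Q_S$ share a sign --- is genuine, but the paper's own proof makes the same implicit assumption in its final step $\gamma_1\cdots\gamma_n\,\langle[\sqrt{-1}\Theta(E),\Lambda_\omega]S_\gamma u,S_\gamma u\rangle_{h^E,\omega}\ge C_2^{-n}\langle[\sqrt{-1}\Theta(E),\Lambda_\omega]S_\gamma u,S_\gamma u\rangle_{h^E,\omega}$ (and is only applied to semi-positive curvature forms), so your argument is no weaker than the one in the text.
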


\begin{proof}
First, we note that 
$$
\gamma_j \geq \frac{1}{C_2}
$$
for every $j$ at the point $q$. Then
\begin{equation*}
|u|^2_{h^E, \widetilde \omega} =  \sum_{|K|=\ell,\lambda}  \frac{1}{\gamma_1 \cdots \gamma_n} \frac{1}{\gamma_K} |u_{K\lambda}|^2 \leq C_2^{n+\ell} \sum_{|K|=\ell, \lambda} |u_{K \lambda}|^2 = C_2^{n+\ell} |u|^2_{h^E, \omega}.
\end{equation*}
Moreover, by VIII-6 and VII-7 in \cite{Dem}
\begin{equation*}
\begin{aligned}
\Lambda_{\widetilde \omega} u = \sum_{|I|=\ell-1} \sum_{j,\lambda} \sqrt{-1} (-1)^{n+j-1} \gamma_{j}^{-1} u_{jI, \lambda} dz_1 \wedge \cdots \wedge \widehat{ dz_j} \wedge \cdots \wedge dz_n \wedge d \bar z_I \otimes e_{\lambda}
\end{aligned}
\end{equation*}
and
\begin{equation*}
\begin{aligned}
[\sqrt{-1}\Theta(E), \Lambda_{\widetilde \omega}]u  = \sum_{|I|=\ell-1} \sum_{j,k,\lambda,\mu} \gamma_j^{-1} c_{jk\lambda \mu} u_{jI, \lambda} dz_1 \wedge \cdots \wedge d z_n \wedge d \bar z_{kI} \otimes e_{\mu},
\end{aligned}
\end{equation*}
where 
$$
\sqrt{-1} \Theta(E) = \sqrt{-1}  \sum_{j,k,\lambda,\mu} c_{jk \lambda \mu} dz_j \wedge d \bar z_k \otimes e_{\lambda}^* \otimes e_{\mu}.
$$
Hence we obtain
\begin{equation*}
\begin{aligned}
\langle [\sqrt{-1}\Theta(E), \Lambda_{\widetilde \omega}]u , u \rangle_{h^E, \widetilde \omega} &= (\gamma_1 \cdots \gamma_n)^{-1} \sum_{|I|=\ell-1} \gamma_{I}^{-1} \sum_{j,k,\lambda,\mu} \gamma_j^{-1} \gamma_{k}^{-1} c_{jk\lambda \mu} u_{jI, \lambda} \overline {u_{kI, \mu}} \\
&= \gamma_1 \cdots \gamma_n\langle [\sqrt{-1} \Theta(E), \Lambda_{\omega}] S_{\gamma} u, S_{\gamma} u \rangle_{h^E, \omega} \\
& \geq \frac{1}{C_2^n} \langle [\sqrt{-1} \Theta(E), \Lambda_{\omega}] S_{\gamma} u, S_{\gamma} u \rangle_{h^E, \omega},
\end{aligned} 
\end{equation*}
and
\begin{equation}\nonumber
\begin{aligned}
    | S_{\gamma} u |^2_{h^E, \omega} 
    &= (\gamma_1 \cdots \gamma_n)^{-2} \sum_{|K|=\ell,\lambda} \gamma_K^{-2} |u_{K \lambda}|^2\\ &\leq C_2^{2( n + \ell)} \sum_{|K|=\ell, \lambda} |u_{K \lambda}|^2 = C^{2(n+\ell)}_{2} |u|^2_{h^E, \omega}.
\end{aligned}
\end{equation}
Furthermore, by a similar calculation, it follows 
$$
|u|^2_{h^E, \widetilde \omega} \geq C_1^{n+\ell} |u|^2_{h^E, \omega},
$$
$$
\langle [\sqrt{-1} \Theta(E), \Lambda_{\widetilde \omega} u, u \rangle_{h^E, \widetilde \omega} \leq \frac{1}{C_1^{n}} \langle [\sqrt{-1} \Theta(E), \Lambda_{\omega}] S_{\gamma}u, S_{\gamma} u \rangle_{h^E, \omega},
$$
and
$$
 |S_{\gamma} u|^2_{h^E, \omega} \geq C_1^{2(n+\ell)} | u |^2_{h^E, \omega}.
$$
Therefore, the proof is completed.
\end{proof}

\section{Proof of Theorem~\ref{main}}\label{proof of main theorem}

\subsection{Equivalence between certain cohomologies}
The proofs in this section are influenced by \cite{Fu92}, \cite{Z15}. Let $(F, h^F)$ be a holomorphic vector bundle over $\overline{\Sigma}$ which has a smooth hermitian metric $h^F$ on $\Sigma$. Given any open set $U \subset \overline{\Sigma}$, let $L^{r,*}_{2,loc}(U,F)$ be the space of measurable sections of $F$-valued $(r,*)$ forms which is $L^2$-integrable on $K- \bigcup_{b \in B} D_{b} $ for any compact subset $K$ of $U$ with respect to $dV_{\omega}$ and $h^F$. 
Define sheaves $L^{r,*}_{2,F}$ by
$$
L^{r,*}_{2,F} (U) := \{ \nu \in L^{r,*}_{2,loc} (U, F) : \bar \partial \nu\in L^{r, *+1}_{2, loc} (U, F) \}
$$
for any open set $U$ on $\overline{\Sigma}$.  Note that sheaves $L^{r,*}_{2,F}$ are fine sheaves by the completeness of $\omega$ on $\Sigma$ which has finite volume on $\Sigma$, and so
\begin{equation}\label{vanishing of fine sheaves}
H^{s} (\overline \Sigma, L^{r,*}_{2,F})=0 \quad \text{for every $s>0$}.
\end{equation}
For the detail of fine sheaves, see \cite{Warner}.

Let $\bar \partial_{(r,s),F}$ be the sheaf morphisms
$$
\bar \partial_{(r,s),F} : L^{r,s}_{2,F} \longrightarrow L^{r,s+1}_{2,F}
$$
which is defined by the maximal closed extension of $\bar \partial$. While the following lemma is likely well known to experts in this field, we include the proof to provide the reader with clarity.
\begin{lemma}\label{sheaf cohomology}
Let $\Gamma$ be a torsion-free lattice of $\text{Aut}(\mathbb{B}^n)$ and let $\Sigma = \mathbb{B}^n/ \Gamma$ be a ball quotient with finite volume. Let $E$ be a holomorphic vector bundle over $\overline{\Sigma}$ and $(F, h^F)$ be a holomorphic vector bundle $\overline{\Sigma}$ with a smooth hermitian metric $h^F$ on $\Sigma$. If there exist exact sequences 
\begin{equation}\label{exactness1}
0 \rightarrow \mathcal{O}(E) \xrightarrow{} L^{r,0}_{2,F} \xrightarrow {\bar \partial} \ker \bar  \partial_{(r,1),F} \rightarrow 0
\end{equation}
and
\begin{equation}\label{exactness2}
0 \rightarrow \ker \bar\partial_{(r,s),F}  \xrightarrow{} L^{r,s}_{2,F} \xrightarrow {\bar \partial} \ker \bar \partial_{(r,s+1),F} \rightarrow 0
\end{equation}
for all $1 \leq s \leq n$, then we have
$$
H^{s}(\overline{\Sigma}, \mathcal{O}(E)) \cong H^{r,s}_{L^2, \bar \partial} (\Sigma, F).
$$
\end{lemma}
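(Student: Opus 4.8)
The plan is to splice the short exact sequences \eqref{exactness1} and \eqref{exactness2} into a fine resolution of $\mathcal O(E)$ on $\overline\Sigma$, apply the abstract de Rham theorem, and then identify the resulting complex of global sections with the complex computing $H^{r,*}_{L^2,\bar\partial}(\Sigma,F)$. For the splicing, \eqref{exactness1} gives $\mathcal O(E)=\ker\big(\bar\partial\colon L^{r,0}_{2,F}\to L^{r,1}_{2,F}\big)$ together with $\operatorname{im}\big(\bar\partial\colon L^{r,0}_{2,F}\to L^{r,1}_{2,F}\big)=\ker\bar\partial_{(r,1),F}$, and \eqref{exactness2} with index $s$ gives $\operatorname{im}\big(\bar\partial\colon L^{r,s}_{2,F}\to L^{r,s+1}_{2,F}\big)=\ker\bar\partial_{(r,s+1),F}$, which is by construction the kernel of the next arrow $\bar\partial\colon L^{r,s+1}_{2,F}\to L^{r,s+2}_{2,F}$. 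Concatenating these identifications, the complex of sheaves
\begin{equation}\nonumber
0\to \mathcal O(E)\to L^{r,0}_{2,F}\xrightarrow{\ \bar\partial\ } L^{r,1}_{2,F}\xrightarrow{\ \bar\partial\ }\cdots\xrightarrow{\ \bar\partial\ } L^{r,n}_{2,F}\to 0
\end{equation}
is exact, i.e. $L^{r,*}_{2,F}$ is a resolution of $\mathcal O(E)$ on $\overline\Sigma$.

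Next, each $L^{r,s}_{2,F}$ is a fine sheaf on the compact (hence paracompact) space $\overline\Sigma$: multiplication of a section $\nu$ by a smooth cutoff function $\chi$ keeps it in the sheaf, since $\bar\partial(\chi\nu)=\bar\partial\chi\wedge\nu+\chi\,\bar\partial\nu$ is again square-integrable over every compact subset whenever $\nu$ and $\bar\partial\nu$ are. Hence each $L^{r,s}_{2,F}$ is acyclic, which is precisely \eqref{vanishing of fine sheaves}, and applying the abstract de Rham theorem (see \cite{Warner}) to the resolution above yields
\begin{equation}\nonumber
H^s(\overline\Sigma,\mathcal O(E))\cong H^s\big(L^{r,*}_{2,F}(\overline\Sigma),\bar\partial\big)\qquad\text{for every }s\geq 0 .
\end{equation}

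It remains to identify the global complex $\big(L^{r,*}_{2,F}(\overline\Sigma),\bar\partial\big)$ with the $L^2$-Dolbeault complex of $(\Sigma,F)$. Since $\overline\Sigma$ is compact, taking $K=\overline\Sigma$ in the definition of the sheaves $L^{r,*}_{2,F}$ shows that $L^{r,s}_{2,F}(\overline\Sigma)$ consists exactly of the measurable $F$-valued $(r,s)$-forms $\nu$ on $\Sigma$ with $\nu\in L^{r,s}_2(\Sigma,F)$ and $\bar\partial\nu\in L^{r,s+1}_2(\Sigma,F)$; that is, $L^{r,s}_{2,F}(\overline\Sigma)=\operatorname{Dom}(\bar\partial)\cap L^{r,s}_2(\Sigma,F)$ for the maximal closed extension of $\bar\partial$ that defines the $L^2$-Dolbeault cohomology. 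Consequently
\begin{equation}\nonumber
H^s\big(L^{r,*}_{2,F}(\overline\Sigma),\bar\partial\big)=\frac{\ker\big(\bar\partial\colon L^{r,s}_2(\Sigma,F)\to L^{r,s+1}_2(\Sigma,F)\big)}{\operatorname{im}\big(\bar\partial\colon L^{r,s-1}_2(\Sigma,F)\to L^{r,s}_2(\Sigma,F)\big)}=H^{r,s}_{L^2,\bar\partial}(\Sigma,F),
\end{equation}
and combining this with the previous display proves the lemma. The genuinely analytic content --- the exactness of \eqref{exactness1} and \eqref{exactness2}, i.e. an $L^2$ Dolbeault--Grothendieck lemma for $F$, including near the cusp divisors --- enters only as a hypothesis, so the single delicate point in this argument is the last identification, which uses the compactness of $\overline\Sigma$ to convert the ``locally $L^2$ on compact subsets of $\overline\Sigma$'' condition defining the sheaves $L^{r,*}_{2,F}$ into the global $L^2$-condition defining the Dolbeault complex.
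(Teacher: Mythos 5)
Your proof is correct and follows essentially the same route as the paper: both rest on splicing \eqref{exactness1} and \eqref{exactness2} into a fine resolution of $\mathcal O(E)$ by the sheaves $L^{r,*}_{2,F}$, using \eqref{vanishing of fine sheaves}, and identifying the global-section complex over the compact $\overline\Sigma$ with the $L^2$-Dolbeault complex of $(\Sigma,F)$. The only cosmetic difference is that you invoke the abstract de Rham theorem as a black box, whereas the paper unwinds it by hand via the long exact cohomology sequences and dimension shifting.
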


\begin{proof}
If $s=0$, it follows by the exactness of \eqref{exactness1}.  If $s=1$, by \eqref{vanishing of fine sheaves} and the exactness of \eqref{exactness1}, we have an exact sequence
$$
0 
\to 
H^{0} ( \overline{\Sigma}, \mathcal{O} (E) ) 
\to 
H^{0}( \overline{\Sigma}, L^{r,0}_{2, F} )  
\to 
H^0(\overline \Sigma, \ker \bar \partial_{(r,1),F})
\to
H^{1}(\overline{\Sigma}, \mathcal{O}(E)) \to 0 
$$
and this implies
$$
H^1(\overline \Sigma, \mathcal{O}(E) ) \cong \frac{ H^0(\overline{\Sigma} , \ker \bar \partial_{(r,1),F})}{ \bar \partial H^0 (\overline {\Sigma}, L^{r,0}_{2,F}) }  \cong H^{r,1}_{L^2, \bar \partial} (\Sigma, F).
$$
Now, we assume that $s >1$.  Then by \eqref{vanishing of fine sheaves} and the exactness of \eqref{exactness1}, we have an exact sequence
\begin{equation}\nonumber
0\cong H^{s-1}( \overline{\Sigma}, L^{r,0}_{2,F} )
\to
H^{s-1}( \overline{\Sigma}, \ker \bar \partial_{(r,1),F} ) 
\to
H^s(\overline{\Sigma}, \mathcal{O}( E)) 
\to 
H^s (\overline{\Sigma}, L^{r,0}_{2,F}) \cong 0 
\end{equation}
for any $s>1$. Hence
\begin{equation}\label{cohomology 1}
H^s(\overline \Sigma, \mathcal{O}( E) ) \cong H^{s-1} (\overline \Sigma, \ker \bar \partial_{(r,1),F}).
\end{equation}
Moreover, by \eqref{vanishing of fine sheaves} and the exactness of \eqref{exactness2} we have an exact sequence
\begin{equation}\nonumber
0\cong H^{s-1}( \overline{\Sigma}, L^{r,*}_{2,F} ) 
\to
H^{s-1}( \overline{\Sigma}, \ker \bar \partial_{(r,*+1),F} ) 
\to
H^{s}(\overline{\Sigma}, \ker \bar \partial_{(r,*),F}) \to
H^s (\overline{\Sigma}, L^{r,*}_{2,F}) \cong 0 
\end{equation}
for any $s>1$. Hence
\begin{equation}\label{cohomology 2}
H^{s-1}(\overline{\Sigma}, \ker \bar \partial_{(r,*+1),F}) \cong H^s(\overline{\Sigma}, \ker \bar \partial_{(r, *),F}).
\end{equation}
Therefore,
\begin{equation}\nonumber
\begin{aligned}
H^{s}(\overline{\Sigma}, \mathcal{O}(E) ) & \cong H^{s-1} (\overline \Sigma, \ker \bar \partial_{(r,1),F}) \\
& \cong H^{s-2} (\overline {\Sigma}, \ker \bar \partial_{(r,2),F} ) \\
& \cong \cdots \\
& \cong H^{1} ( \overline {\Sigma}, \ker \bar \partial_{(r,s-1),F} ),
\end{aligned}
\end{equation}
by \eqref{cohomology 1} and \eqref{cohomology 2} for any $s>1$.
Finally, by \eqref{vanishing of fine sheaves} and the exactness of \eqref{exactness2} we have an exact sequence
$$
0 
\to 
H^{0}(\overline \Sigma, \ker \bar \partial_{(r,s-1),F}) 
\to 
H^{0}(\overline \Sigma, L^{r,s-1}_{2,F}) 
\to 
H^{0}(\overline \Sigma, \ker \bar \partial_{(r,s),F}) 
\to 
H^1(\overline \Sigma, \ker \bar \partial_{(r,s-1),F}) \to  
0,
$$
and so 
$$
H^1(\overline{\Sigma}, \ker \bar \partial_{(r,s-1),F}) \cong \frac{H^0(\overline \Sigma, \ker \bar \partial_{(r,s),F})}{\bar \partial H^0 (\overline {\Sigma}, L^{r, s-1}_{2,F})} \cong H^{r,s}_{L^2, \bar \partial}(\Sigma, F).
$$
This completes the proof.
\end{proof}

\subsection{Existence of resolution of a sheaf when $r=0$ or $r=n$} Throughout the subsection, we let $F=(S^m T_{\overline \Sigma}^*, g^{-m})$, where $g$ is the induced Bergman metric on $\Sigma$.  Let $\mathbb{D}(\epsilon)$ denote the complex disc in $\mathbb{C}$ with radius $\epsilon$ and $\mathbb{D}^*(\epsilon) := \mathbb{D} (\epsilon) - \{0 \}$. Let $D_b$ be the boundary divisor in $\overline{\Sigma}$ corresponding to the cusp $b$.

From now on, we will use the euclidean coordinates $(w', w_n):= (w_1, \cdots, w_n)$ as local holomorphic coordinates on $\Omega_{b}^{(N)}$ which are induced by the uniformization $\widehat G^{(N)}$ of $\Omega_b^{(N)} = \widehat G^{(N)} / \pi(\Gamma \cap W_b)$ and we assume that $\| w \| < 1$ on $\Omega_{b}^{(N)}$. 

Let $b_1, \cdots, b_k$ be the cusps of $\Sigma$ and $D_j$ be the boundary divisor corresponding to $b_j$ for each $j=1, \cdots, k$ in the toroidal compactification $\overline{\Sigma}$ of $\Sigma$. 
Let $\pi_j : \Omega_{j}^{(N)} \rightarrow D_{j}$ be the canonical projection. 
Define
\begin{equation}\nonumber
E_{r,m}:=\bigoplus_{j=1}^k E_{r,m,j}
\end{equation}
where
\begin{equation}\nonumber
\begin{aligned}
E_{r,m,j} 
:=& \bigg \{ \Lambda^r \pi_j^* T_{D_j}^*  \otimes \bigg( \bigoplus_{\substack{  \ell < n-(m+r) }  } 
\left( S^{m-\ell} \pi_j^* T_{D_j}^* \otimes S^{\ell} T^*_{\Omega_j /D_j} \otimes [\ell D_j] \right)    \\
& \quad \quad  \oplus \bigoplus_{ \substack{ \ell \geq n-(m+r) } }  \left( { S^{m-\ell} \pi_{j}^* T_{D_j}^*  \otimes S^{\ell} T^*_{\Omega_j /D_j} } \otimes [(\ell-1) D_j] \right) \bigg)\bigg \}  \\
&\oplus \bigg \{ \left(  \Lambda^{r-1} \pi_j^* T_{D_j}^*  \otimes T_{\Omega_j / D_j}^* \right) \otimes \bigg(  \bigoplus_{\substack{ \ell < n-(m+r+1) }  } \left( S^{m-\ell} \pi_j^* T_{D_j}^* \otimes S^{\ell} T^*_{\Omega_j /D_j} \otimes [(\ell+1) D_j]   \right) \\
&\qquad \qquad \oplus   \bigoplus_{\substack{ \ell \geq n-(m+r+1) }  } \left( S^{m-\ell} \pi_j^* T_{D_j}^* \otimes S^{\ell} T^*_{\Omega_j /D_j} \otimes [\ell D_j]   \right)\bigg)
\bigg \}
\end{aligned}
\end{equation}
be a vector bundle over  $\bigcup_{j=1}^{k} \Omega_{i}^{(N)}$
and $E_{r,m}=\Lambda^{r} T^*_{\Sigma} \otimes S^mT^*_\Sigma$ on $\Sigma$, when $r \geq 0$. 
This is well defined since the restriction of the line bundle $[tD_j]$ to $\Sigma$ is trivial for each $j$ and $t \in \mathbb N\cup \{0\}$.
Here $[D_{j}]$ is the associated holomorphic line bundle to $D_{j}$ and $T^*_{\Omega_{j}^{(N)} / D_{j}}$ is the relative cotangent bundle for $\pi_j$. In the definition of $E_{r,m}$, we use the canonical identification of $\Omega_{j}^{(N)}$ and a tabular neighborhood of the zero section of $N_{j}$.

\begin{lemma}\label{exactness for (r,s) 0}
Let $\Gamma$ be a torsion-free lattice of $\text{Aut}(\mathbb{B}^n)$ with only unipotent parabolic automorphisms and let $\Sigma = \mathbb{B}^n/ \Gamma$ be a ball quotient with finite volume.  { Then for each $r, m \in \mathbb{N} \cup \{ 0 \}$,} the following sequence
\begin{equation*}
0 \rightarrow \mathcal{O} ( E_{r,m} ) \xrightarrow { } \ L^{r,0}_{2,F} \xrightarrow {\bar \partial} \ker \bar  \partial_{(r,1),F}
\end{equation*}
is exact.
\end{lemma}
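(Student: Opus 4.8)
The plan is to verify exactness at the two nontrivial spots: injectivity of $\mathcal{O}(E_{r,m})\to L^{r,0}_{2,F}$, and exactness at $L^{r,0}_{2,F}$, i.e. that a section $\nu\in L^{r,0}_{2,F}(U)$ with $\bar\partial\nu=0$ is (locally) a holomorphic section of $E_{r,m}$. Since everything is local on $\overline\Sigma$, it suffices to check this on a basis of open sets. Away from the boundary divisors $D_j$, the bundle $F=S^mT^*_{\overline\Sigma}$ restricts to $S^mT^*_\Sigma$, the metric $g^{-m}$ is smooth, $\omega$ is a smooth Kähler metric, and $E_{r,m}=\Lambda^rT^*_\Sigma\otimes S^mT^*_\Sigma$; there the statement is the classical fact that a locally $L^2$ form annihilated by $\bar\partial$ is holomorphic (elliptic regularity / the ordinary Dolbeault–Grothendieck lemma, noting that in bidegree $(r,0)$ being $\bar\partial$-closed already forces holomorphicity, and the $L^2$ condition is automatic locally). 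So the real content is at a point $p\in D_j$, working on $\mathbb D^{n-1}(\epsilon)\times\mathbb D^*(\epsilon)$ with the euclidean coordinates $(w',w_n)$.

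First I would write out the $L^2$ condition explicitly near $p$. Using Lemma~\ref{quasi-isometric}, $\omega\sim\widetilde\omega$, so the pointwise norm $|\cdot|_{g^{-m},\omega}$ of an $F$-valued $(r,0)$ form is comparable to the norm computed with $\widetilde\omega$ (which is diagonal in the frame $d\zeta_k$, $d\zeta_n$ of Lemma~\ref{quasi-isometric}); and $dV_\omega$ is comparable to $\|w\|^{-2}(-\log\|w\|)^{-n-1}dV$ by the volume estimate recalled in Section~\ref{preliminaries}. Expanding a $\bar\partial$-closed, hence holomorphic, $F$-valued $(r,0)$ form in the basis $dw_I\otimes dw^{\odot K}$ (holomorphic $(r,0)$-part times a symmetric $m$-tensor in $dw_1,\dots,dw_n$), one sees that the weight factors of $\|w\|$ and $(-\log\|w\|)$ attached to each basis element depend on how many $dw_n$ factors appear in the $T^*_{\Omega_j/D_j}$-directions (both from the $(r,0)$ slot and from the symmetric slot), because $\|\zeta_n\|$ carries the singular factor $\|w\|^{-1}(-\log\|w\|)^{-1}$ while $\|\zeta_k\|$ only carries $(-\log\|w\|)^{-1/2}$. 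The key computation is then: a holomorphic coefficient function $f(w)$ multiplying a basis element with a given $w_n$-order $\ell$ is $L^2$ near $\{w_n=0\}$ with respect to this singular measure if and only if $f$ vanishes to a prescribed order along $\{w_n=0\}$ — and that prescribed vanishing order is exactly the one encoded by the line-bundle twists $[\ell D_j]$ or $[(\ell-1)D_j]$ (and $[(\ell+1)D_j]$, $[\ell D_j]$ in the second summand) in the definition of $E_{r,m,j}$, with the threshold at $\ell\lessgtr n-(m+r)$ (resp. $n-(m+r+1)$) coming from comparing the exponent of $(-\log\|w\|)$ against the borderline case where the logarithm contributes a divergent or convergent integral $\int_0^\epsilon \frac{dt}{t(-\log t)^{a}}$.

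Concretely, I would: (i) fix the frame and expand $\nu=\sum_{I,K} f_{IK}\,d\zeta_I\otimes d\zeta^{\odot K}$ with $f_{IK}$ holomorphic on $\mathbb D^{n-1}(\epsilon)\times\mathbb D^*(\epsilon)$; (ii) re-express this in terms of the natural holomorphic frame of $\Lambda^rT^*_{\overline\Sigma}\otimes S^mT^*_{\overline\Sigma}$ near $p$ (i.e. $dw_I\otimes dw^{\odot K}$ with $w_n$ and $dw_n$ allowed), absorbing the transition factors, which are powers of $\|w\|$ and $(-\log\|w\|)$; (iii) compute $\|\nu\|^2$ as a sum over $(I,K)$ of $\int |f_{IK}|^2 \|w\|^{2a_{IK}}(-\log\|w\|)^{2b_{IK}}\,\|w\|^{-2}(-\log\|w\|)^{-n-1}\,dV$ for explicit exponents $a_{IK},b_{IK}$ depending on the number of $w_n$-directions; (iv) use Fubini in $w_n$ together with the elementary fact that for $f$ holomorphic on $\mathbb D^*$, $\int_{\mathbb D(\epsilon)}|f|^2|w_n|^{2c}(-\log|w_n|)^{d}\,\tfrac{i}{2}dw_n\wedge d\bar w_n<\infty$ iff $f$ extends holomorphically across $0$ with vanishing order $\ge$ an explicit integer determined by $c$ and $d$; (v) read off that this vanishing order is precisely the one making $f_{IK}$ a holomorphic section of the corresponding twisted summand of $E_{r,m,j}$ — which is the claimed isomorphism $\ker\big(L^{r,0}_{2,F}\xrightarrow{\bar\partial}\big)\cong\mathcal O(E_{r,m})$ near $p$. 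Injectivity of $\mathcal O(E_{r,m})\hookrightarrow L^{r,0}_{2,F}$ is immediate since a nonzero holomorphic section is nonzero on a dense open set.

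\textbf{Main obstacle.} The delicate point is the bookkeeping in step (iv)–(v): correctly matching, for \emph{every} multi-index $(I,K)$ — in particular distinguishing the two summands of $E_{r,m,j}$ according to whether the $(r,0)$-slot does or does not contain the $dw_n$-direction, and handling the coupled constraints when both the $\Lambda^r$-slot and the symmetric $S^m$-slot contribute $w_n$-covectors — the analytic $L^2$ threshold (an inequality between the logarithmic exponent $2b_{IK}-(n+1)$ and $-1$, i.e. the case split $\ell\lessgtr n-(m+r)$) with the algebraic definition of $E_{r,m,j}$ (the twist by $[\ell D_j]$ versus $[(\ell-1)D_j]$). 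One must check the borderline integrals $\int_0^\epsilon t^{-1}(-\log t)^{-a}dt$ converge iff $a>1$ to see that the ``$<$'' versus ``$\geq$'' in the index ranges is placed correctly, and verify that the relative cotangent factors $S^\ell T^*_{\Omega_j/D_j}$ together with the pullback factors $S^{m-\ell}\pi_j^*T^*_{D_j}$ and $\Lambda^\bullet\pi_j^*T^*_{D_j}$ reproduce exactly the coefficient bundles obtained after clearing the metric weights. I expect this to be essentially a careful but mechanical verification once the norm of a general basis element is computed in closed form; no new idea beyond the quasi-isometry (Lemma~\ref{quasi-isometric}) and the volume asymptotics is needed.
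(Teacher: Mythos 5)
Your proposal is correct and follows essentially the same route as the paper's own proof: reduce to a neighborhood $\mathbb D^{n-1}(\epsilon)\times\mathbb D^*(\epsilon)$ of a point of $D_b$, expand the $\bar\partial$-closed (hence holomorphic) section in the coordinate frame, use the quasi-isometry of Lemma~\ref{quasi-isometric} and the volume asymptotics to turn the $L^2$ condition into weighted one-variable integrals in $w_n$, and read off via the Laurent expansion the exact pole/vanishing thresholds encoded in the twists $[\ell D_j]$, $[(\ell-1)D_j]$ of $E_{r,m,j}$, checking the converse inclusion $\mathcal O(E_{r,m})\subset L^{r,0}_{2,F}$ the same way. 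The bookkeeping you flag as the main obstacle is precisely what the paper carries out in \eqref{integral convergence for (r,0) 1} and \eqref{integral convergence for (r,0) 2}.
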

\begin{proof}
At first we want to show that 
\begin{equation}\label{exactness for (s,q) 1}
\mathcal{O}_{\overline {\Sigma}} ( E_{r,m} ) \xrightarrow { } L^{r,0}_{2,F} \xrightarrow {\bar \partial} \ker \bar \partial_{(r,1),F}
\end{equation}
is exact.
For any open set in $\Sigma$, the exactness of the sequence is well known and hence we only need to consider open sets intersecting with divisors $D_{j}, j=1, \cdots, k$ in $\overline{\Sigma}$.
Consider a divisor $D_{b} \subset \overline{\Sigma}$ and fix a point $p \in D_{b}$. Let $U_{p} := \mathbb{D}^{n-1} (\epsilon) \times \mathbb{D}^* (\epsilon)$ where $\mathbb{D}^{n-1} (\epsilon) := \{ (w_1, \cdots, w_{n-1}) : w_i \in \mathbb{D}(\epsilon) , 1 \leq i \leq n-1 \}$ with a sufficiently small $\epsilon >0$ so that $p \in U_{p} \subset \subset \Omega_{{b}}^{(N)}$. 

Note that every holomorphic section $s$ in $L^{r,0}_{2,F}(U_{p})$ is  expressed as 
$$
s= \sum_{\substack{|I|=m \\ j_1 < \cdots < j_r}} s_{j_1 \cdots j_r, I} (w', w_n) dw_{j_1} \wedge \cdots \wedge dw_{j_r} \otimes  dw^{I}
\quad \text{ with } \quad s_{j_1 \cdots j_r, I} (w', w_n) \in \mathcal{O}(U_{p}).
$$
Since for each fixed $w'$, $s_{j_1 \cdots j_r, I}$ can be regarded as a holomorphic function on $\mathbb{D}^* (\epsilon)$, $s_{j_1 \cdots j_r, I} (w', w_n)$ has the Laurent expansion:
$$
s_{j_1 \cdots j_r, I} (w', w_n) = \sum_{k=-\infty}^{\infty} s_{j_1\cdots j_r, I, k}(w',0) w_n^{k}.
$$
By Lemma \ref{quasi-isometric}, 
\begin{equation*}
\| dw_k \|^2_{\omega^{-1}} \sim    
\left \{ \begin{array}{ll} 
-\log \|w \| & \text{if $k=1, \cdots, n-1$},\\
\|w\|^2 (-\log\|w\|)^{2} & \text{if $k=n$}.
\end{array} \right.
\end{equation*} 
Let $dV$ be the Lebesgue measure. Then, $s \in L^{r,0}_{2,F}(U_{p})$ implies that
\begin{equation}\nonumber
\int_{\mathbb D^{n-1}(\epsilon)\times\mathbb{D}^*(\epsilon)} |s_{j_1 \cdots j_r, I} (w', w_n)|^{2} \| w \|^{2(i_n-1)}(-\log \| w \|)^{r+m+i_n - (n+1)} dV < \infty \quad \text{for any $|I|=m$}
\end{equation}
when $n \not = j_r$ and
\begin{equation}\nonumber
\int_{\mathbb D^{n-1}(\epsilon)\times\mathbb{D}^*(\epsilon)} |s_{j_1 \cdots j_r, I} (w', w_n)|^{2} \| w \|^{2 i_n }(-\log \| w \|)^{{(r+1)}+m+i_n - (n+1)} dV < \infty \quad \text{for any $|I|=m$}
\end{equation}
when $n = j_r$. Let $dV_{w_n}$ be the Lebesgue measure of $\mathbb{C}$ for each fixed $w' \in \mathbb{D}^{n-1}(\epsilon)$. Then,  since for any $k$ the function $x\mapsto x^2(-\log x)^k$ is increasing on $0<x<\epsilon$ for sufficiently small $\epsilon$, we have 
\begin{equation}\label{integral calculation for (r,0) 1}
\int_{\mathbb{D}^*(\epsilon)} |s_{j_1 \cdots j_r, I} (w', w_n)|^{2} | w_n|^{2(i_n-1)}(-\log | w_n|)^{r+m+i_n - (n+1)} dV_{w_n} < \infty \quad \text{for any $|I|=m$}
\end{equation}
when $n \not = j_r$ and
\begin{equation}\label{integral calculation for (r,0) 2}
\int_{\mathbb{D}^*(\epsilon)} |s_{j_1 \cdots j_r, I} (w', w_n)|^{2} | w_n|^{2 i_n }(-\log | w_n|)^{r+m+i_n -n} dV_{w_n} < \infty \quad \text{for any $|I|=m$}
\end{equation}
when $n = j_r$.
Using polar coordinate $(\rho, \theta)$,  then we obtain
\begin{equation}\nonumber
\begin{aligned}
\eqref{integral calculation for (r,0) 1} &= \int_{0}^{\epsilon} \bigg( \int_{0}^{2\pi} \bigg | \sum_{k=-\infty}^{\infty} s_{j_1 \cdots j_r, I, k}(w',0) (\rho^{k} e^{\sqrt{-1} k \theta})   \bigg |^2 d \theta \bigg)  \rho^{2(i_k-1)} (-\log \rho)^{r+m+i_n - (n+1)} \rho d\rho  \\
&= 2\pi  \sum_{k=-\infty}^{\infty} |s_{j_1 \cdots j_r, I,k} (w',0)|^2 \bigg( \int_{0}^{\epsilon}  \rho^{2k + 2i_n -1} (- \log \rho)^{r+m+i_n - (n+1)} d\rho \bigg) \\
&= 2\pi \sum_{k=-\infty}^{\infty} |s_{j_1 \cdots j_r, I, k} (w', 0)|^2 \bigg( \int_{-\log \epsilon}^{\infty} e^{-(2k+2i_n) u} u^{r+m+i_n-(n+1)}du \bigg)
\end{aligned}
\end{equation}
and similarly, we obtain 
\begin{equation}\nonumber
\begin{aligned}
\eqref{integral calculation for (r,0) 2} &= 2\pi \sum_{k=-\infty}^{\infty} |s_{j_1 \cdots j_r, I, k} (w', 0)|^2 \bigg( \int_{-\log \epsilon}^{\infty} e^{-(2k+2i_n+2) u} u^{r+m+i_n-n}du \bigg).
\end{aligned}
\end{equation}
Remark that 
\begin{equation} \label{integral convergence for (r,0) 1}
\int_{-\log \epsilon}^{\infty} e^{-(2k+2i_n) u} u^{r+m+i_n-(n+1)}du  < \infty \Longleftrightarrow
\bigg \{ \begin{array}{ll} 
k \geq -(i_n -1) & \text{if $r+m+i_n \geq n$},\\
k \geq -i_n & \text{if $r+m+i_n < n$}
\end{array} 
\end{equation}
and
\begin{equation} \label{integral convergence for (r,0) 2}
 \int_{-\log \epsilon}^{\infty} e^{-(2k+2i_n+2) u} u^{(r+1)+m+i_n-(n+1)}du  < \infty \Longleftrightarrow
\bigg \{ \begin{array}{ll} 
k \geq -i_n & \text{if $r+m+i_n \geq n-1$},\\
k \geq -(i_n+1) & \text{if $r+m+i_n < n-1$}.
\end{array} 
\end{equation} 
Therefore, \eqref{exactness for (s,q) 1} is exact.
Secondly, we want to show that 
\begin{equation*}
0 \rightarrow \mathcal{O} ({ E_{r,m}}) \xrightarrow {} L^{r,0}_{2,F} 
\end{equation*}
is exact, i.e. we will prove that if $s \in \mathcal{O} ( E_{r,m}) (U_{p})$, then it follows that  $s \in L^{r,0}_{2,F}(U_{p})$. 
Note that  by the definition of ${E_{r,m}}$, we can express $s$ as
$$
s= \sum_{\substack{|I|=m \\ j_1 < \cdots <j_r } } s_{j_1 \cdots j_r, I} (w', w_n) dw_{j_1} \wedge \cdots \wedge dw_{j_r} \otimes dw^{I}
\quad \text{ with } \quad s_{j_1 \cdots j_r, I} (w', w_n) \in \mathcal{O}(U_{p}).
$$
If $n \not = j_r$, then $s_{j_1 \cdots j_r, I}$ may have a pole up to the order { $i_n -1$ if $i_n \geq n-(m+r)$ and $i_n$ if $ i_n < n-(m+r)$, for each fixed $w'$.} Similarly, if $n = j_r $ then $s_{j_1 \cdots j_r, I}$ may have a pole up to the order { $i_n $ if $i_n \geq n-(m+r+1)$ and $i_r +1 $ if $ i_r < n-(m+r+1)$, for each fixed $w'$.} Finally, using \eqref{integral convergence for (r,0) 1} and \eqref{integral convergence for (r,0) 2}, we know that $s \in L_{2,loc}^{(r,0),F}(U_{p})$ and the proof is completed.
\end{proof}

The following proposition can be regarded as an $L^2$-version of Dolbeault–Grothendieck lemma.
\begin{proposition}\label{exactness for (r,s) 2}
Let $\Gamma$ be a torsion-free lattice of $\text{Aut}(\mathbb{B}^n)$ with only unipotent parabolic automorphisms and $\Sigma = \mathbb{B}^n/ \Gamma$ be a ball quotient with finite volume. Let $F=S^mT_{\Sigma}^*$. Then the following sequences
\begin{equation}\label{local dolbeault for (r,s)}
0 \rightarrow \ker \bar\partial_{(r,\ell),F}  \xrightarrow {} L^{(r,\ell)}_{2,F} \xrightarrow {\bar \partial} \ker \bar \partial_{(r,\ell+1),F} \rightarrow 0
\end{equation}
are exact for all $1 \leq \ell \leq n$. If $r=0$ or $n$, then \eqref{local dolbeault for (r,s)} is exact for $\ell =0$.
\end{proposition}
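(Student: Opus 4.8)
The plan is to reduce the whole statement to a \emph{local} $\bar\partial$-solvability fact near the cusps and then to split according to whether $r\in\{0,n\}$ or $1\le r\le n-1$. Exactness at $\ker\bar\partial_{(r,\ell),F}$ is the tautology that a kernel is a subsheaf, and $\bar\partial\circ\bar\partial=0$ gives $\operatorname{im}\bar\partial_{(r,\ell-1),F}\subseteq\ker\bar\partial_{(r,\ell),F}$, so the content is the surjectivity of $\bar\partial\colon L^{r,\ell}_{2,F}\to\ker\bar\partial_{(r,\ell+1),F}$ as a morphism of sheaves on $\overline\Sigma$; equivalently, a germ at any point of a $\bar\partial$-closed section of $L^{r,\ell+1}_{2,F}$ must admit a germ of a primitive in $L^{r,\ell}_{2,F}$. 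At a point of $\Sigma$ both $\omega$ and $F$ are smooth and this is the classical Dolbeault--Grothendieck lemma with $L^2$ estimates, so one fixes a cusp $b$ and a point $p\in D_b$ and works on a punctured polydisc $U_p=\mathbb D^{n-1}(\epsilon)\times\mathbb D^*(\epsilon)$ as in Lemma~\ref{quasi-isometric}. The case $\ell=n$ is automatic since $\ker\bar\partial_{(r,n+1),F}=0$; unwinding degrees, what one must establish on shrinking neighborhoods of $p$ is $H^{r,s}_{L^2}=0$ for $1\le s\le n$ when $r=0,n$, and for $2\le s\le n$ when $1\le r\le n-1$ --- which is exactly why the $\ell=0$ statement is claimed only for $r=0,n$.

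On $U_p$ I would run the standard reductions: first, by Lemma~\ref{quasi-isometric} replace $\omega$ by the quasi-isometric metric $\widetilde\omega$, which alters every $L^2$-norm by a bounded factor only and leaves the $\bar\partial$-complex and the $L^2_{loc}$ condition untouched; second, identify an $F$-valued $(r,\cdot)$-form with an $(n,\cdot)$-form valued in $\widetilde F_r:=\Lambda^r T_\Sigma^*\otimes S^mT_\Sigma^*\otimes K_\Sigma^{-1}$, so that the Bochner--Kodaira machinery of Section~\ref{preliminaries} applies; third, shrink to a relatively compact sub-polydisc $V=\mathbb D^{n-1}(\epsilon')\times\mathbb D^*(\epsilon')$ on which the given $\bar\partial$-closed datum is \emph{genuinely} $L^2$ --- it is smooth, hence integrable, near the faces $\{|w_j|=\epsilon'\}$ and integrable near $\{w_n=0\}$ by the $L^2_{loc}$ hypothesis. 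Since $\widetilde\omega$ is complete only toward $D_b$, I would equip $V$ with a complete K\"ahler metric, either by adding a Poincar\'e-type term $\sqrt{-1}\partial\bar\partial\chi$ along the artificial boundary (preserving a bounded-gradient K\"ahler potential) or by Demailly's approximation of non-complete $L^2$-estimates by complete ones; then $\bar\partial u=v$ is solvable in $L^2(V)$ as soon as the relevant $H^{r,\cdot}_{L^2}(V,F)$ vanishes, and restricting $u$ back to $\overline\Sigma$ yields the germ we want.

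For $r=0$ or $n$ the K\"ahler potential $\rho=-\log(-\log\|w\|)$ of $\widetilde\omega$ satisfies $\|\partial\rho\|_{\widetilde\omega}\equiv1$, so Proposition~\ref{vanishing by kahler potential} and Corollary~\ref{basic estimate by Kahler potential 2} apply on $V$ once the curvature positivity of $\widetilde F_r$ in the relevant bidegrees has been arranged. To arrange it I would, on each summand of the local model of $\widetilde F_r$ near $D_b$, put a metric quasi-isometric to the induced one by twisting with the appropriate power of the \emph{negatively} curved line bundle $[D_b]$ --- precisely the powers appearing in $E_{r,m}$ and in Lemma~\ref{exactness for (r,s) 0} --- and then balance that negative curvature against the \emph{positive} term $\tfrac{2\pi}{\tau}\sqrt{-1}\partial\bar\partial|w'|^2$ present in $\widetilde\omega$: by Lemma~\ref{curvature estimate 1} an $(n,\ell)$-form gains a factor $(-\log\|w\|)(\ell-1)$ from $[\sqrt{-1}\partial\bar\partial|w'|^2,\Lambda_{\widetilde\omega}]$, which dominates the bounded remaining curvature contributions once $\|w\|$ is small, while Lemma~\ref{comparison between norms} transports these estimates between the $\omega$- and $\widetilde\omega$-pictures. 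This yields $[\sqrt{-1}\Theta(\widetilde F_r),\Lambda_{\widetilde\omega}]\ge 0$ in the needed bidegrees, hence $H^{r,s}_{L^2}(V,F)=0$ for $r+s\ne n$ in the required range. The only borderline degree actually needed is $r=0$, $s=n$, which I would dispatch separately --- reducing it to $(n,n)$-forms valued in $S^mT_\Sigma^*\otimes K_\Sigma^{-1}$, for which only the far weaker mean-curvature positivity (achievable with the $[D_b]$-twist) is required in top antiholomorphic degree, or else by an $L^2$ Serre-duality identification with $L^2$ holomorphic sections of $K_\Sigma\otimes S^mT_\Sigma$ over $V$, which vanish since their growth near the cusp is incompatible with square-integrability.

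For $1\le r\le n-1$ only $s\ge2$ is needed, and here the difficulty highlighted in the introduction is unavoidable: $S^mT_\Sigma^*\otimes K_\Sigma^{-1}$, and hence $\widetilde F_r$, is in general not Nakano semipositive for small $m$, so neither Proposition~\ref{vanishing by kahler potential} nor Corollary~\ref{basic estimate by Kahler potential 2} applies. Following \cite{Ch11} I would instead invoke Berndtsson--Charpentier's proof of a Donnelly--Fefferman-type estimate on $V$: choose a \emph{bounded} weight built from $\rho$ and from $-\log\|w\|$ whose $\sqrt{-1}\partial\bar\partial$ is comparable to $\widetilde\omega$, and use it to absorb the indefinite part of $\sqrt{-1}\Theta(\widetilde F_r)$, obtaining an $L^2$-estimate for $\bar\partial$ acting on $\widetilde F_r$-valued $(n,s)$-forms, $s\ge2$, which the plain H\"ormander--Bochner--Kodaira inequality does not give; this furnishes the solvability on $V$. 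I expect the crux of the whole proof to be precisely this step --- constructing weights for which the Berndtsson--Charpentier estimate genuinely \emph{closes}, i.e.\ checking that the curvature error terms produced by the toroidal geometry (the off-diagonal pieces in the expansion of $\omega_{\Omega_b^{(N)}}$ together with the $[D_b]$-twists) are strictly dominated rather than merely of comparable size. Once that is in place, the remaining work is bookkeeping with the coordinates of Section~\ref{metric description} and the integral estimates already carried out in Lemma~\ref{exactness for (r,s) 0}.
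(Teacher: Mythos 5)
Your overall architecture --- localizing to a punctured polydisc at a cusp, replacing $\omega$ by the quasi-isometric model $\widetilde\omega$, passing to $(n,\cdot)$-forms with values in $\Lambda^{r,0}T^*_\Sigma\otimes S^mT^*_\Sigma\otimes K_\Sigma^{-1}$, completing the metric on $U_p-D_b$, and the degree bookkeeping ($s\ge 1$ for $r=0,n$; $s\ge 2$ otherwise) --- matches the paper, and your treatment of the case $r=0,n$, $s=1$ via Nakano positivity of $T^*_\Sigma$ together with a complete metric admitting a K\"ahler potential of bounded gradient is essentially the paper's use of Corollary~\ref{basic estimate by Kahler potential 2}. (The $[D_b]$-twists you invoke there are a red herring: they play no role in the vanishing argument and enter only in the identification of the kernel sheaf in Lemma~\ref{exactness for (r,s) 0}.)

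The genuine gap is in the main case $s=\ell+1\ge 2$, $1\le r\le n-1$, where you defer to an unexecuted Berndtsson--Charpentier/Donnelly--Fefferman estimate and yourself flag the closing of that estimate as the unresolved ``crux''. The paper does not use that technique in this proposition at all; it is reserved for the later statement treating $\ell=0$, $1\le r\le n-1$, under the extra hypothesis $m\ge n-1$, via Lemma~\ref{local L2 estimate}. The mechanism actually used for $s\ge2$ is elementary and uniform in $r$: since the metric is induced from the Bergman metric, the curvature operator of $\Lambda^{r,0}T^*\otimes F\otimes K^{-1}$ is bounded below by some constant $\alpha$ (possibly negative), and one twists by the \emph{bounded} weight $e^{-k|w'|^2}$. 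By Lemma~\ref{curvature estimate 1} the commutator $[k\sqrt{-1}\partial\bar\partial|w'|^2,\Lambda_{\widetilde\omega}]$ in bidegree $(n,\ell+1)$ is bounded below by $k\ell(-\log\|w\|)$, which for $\ell\ge1$ and $k$ large dominates $|\alpha|$; Lemma~\ref{comparison between norms} transports the estimate between $\omega$ and $\widetilde\omega$, and boundedness of $|w'|^2$ means the weighted and unweighted local $L^2$ classes coincide. This is precisely why no positivity of $S^mT^*\otimes K^{-1}$ and no restriction on $m$ is needed here. Without this step (or a completed substitute) your proof is incomplete; and note that if your Donnelly--Fefferman route ended up requiring $m\ge n-1$, as the paper's version of that technique does, it would not prove the proposition, which is asserted for all $m$.
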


\begin{proof}
For any open set in $\Sigma$, the exactness of the sequence is well known and hence we only need to consider open sets intersecting with the corresponding divisor $D_{b}$ in $\overline{\Sigma}$ of a cusp $b$ of $\Gamma$. Consider a divisor $D_{b} \subset \overline \Sigma$ and fix a point $p \in D_{b}$. Take a sufficiently small open polydisc $p \in U_p \subset \subset \Omega_{b}^{(N)}$ of $p$ such that there exists a coordinate system $(w_1,\cdots,w_n)$ on an open set $W_p$ containing $\overline{U_p}$ which satisfies the statements of Lemma~\ref{quasi-isometric}, \ref{curvature estimate 1}, and \ref{comparison between norms}. Hence, we can set $U_p = \{ w \in \Omega_{{b}}^{(N)} : |w_1|^2 < \delta, \cdots, |w_n|^2 < \delta \}$ for a sufficiently small real number $\delta>0$ and let $\omega_\epsilon$ be a complete K\"ahler metric defined by
\begin{equation*}
\omega_{\epsilon} = \omega + \epsilon \sqrt{-1}  \partial \bar \partial \bigg( \sum_{i=1}^{n} \frac{1}{\delta^2 - |w_i|^2} \bigg) \quad \text{on} \quad U_p-D_b.
\end{equation*}

Let $\sigma_r$ be the induced metric on $\Lambda^{(r,0)} T^*_{U_p - D_b} $ from $\omega$. Since the metric $\omega$ on $\Sigma$ is induced by the Bergman metric on $\mathbb{B}^n$, there is a non-zero constant $\alpha$ on $U_p - D_{b}$ such that
$$
 \langle [\sqrt{-1}\Theta(\Lambda^{(r,0)} T^*_{U_p - D_b} \otimes F \otimes K^{-1}_{U_p - D_b} ), \Lambda_{\omega}] \zeta, \zeta \rangle_{h^F \sigma_r, \, \omega}  \geq  \alpha | \zeta |^2_{h^F \sigma_r, \, \omega}
$$
for any  $\zeta \in C^{\infty} \big( U_p - D_b, \Lambda^{n,\ell+1} T_{\Sigma}^* \otimes \big( \Lambda^{(r,0)} T^*_{U_p - D_b} \otimes F \otimes K^{-1}_{U_p - D_b} \big) \big)$. 
Then, by Lemma~\ref{quasi-isometric}, \ref{curvature estimate 1}, and \ref{comparison between norms}, we obtain
\begin{equation}\label{omega_inequality_for (r,s)_1}
\begin{aligned}
& \langle [\sqrt{-1}\Theta(\Lambda^{(r,0)} T^*_{U_p - D_b} \otimes F \otimes K^{-1}_{U_p - D_b}), \Lambda_{\omega} ] \zeta, \zeta \rangle_{ h^F e^{-k|w'|^2} \sigma_r, \omega}\\
&= \langle [\sqrt{-1}\Theta(\Lambda^{(r,0)} T^*_{U_p - D_b} \otimes F \otimes K^{-1}_{U_p - D_b}), \Lambda_{ \omega} ]  \zeta, \zeta \rangle_{h^F e^{-k |w'|^2}\sigma_r , \omega} \\
&\quad+ k \langle [\sqrt{-1} \partial \bar \partial |w'|^2 ,  \Lambda_{ \omega} ]  \zeta,  \zeta \rangle_{h^F e^{-k|w'|^2} \sigma_r, \omega }  \\
&\gtrsim \langle [\sqrt{-1}\Theta(\Lambda^{(r,0)} T^*_{U_p - D_b} \otimes F \otimes K^{-1}_{U_p - D_b}), \Lambda_{\omega} ]  \zeta,  \zeta \rangle_{h^F e^{-k |w'|^2} \sigma_r, \omega} \\
&\quad + k \langle [\sqrt{-1} \partial \bar \partial |w'|^2 ,  \Lambda_{\widetilde \omega} ]  S_{\gamma}^{-1} \zeta, S_{\gamma}^{-1}  \zeta \rangle_{h^F e^{-k |w'|^2} \sigma_r, \widetilde \omega }\\
&\gtrsim \alpha | \zeta |^2_{h^F e^{-k |w'|^2} \sigma_r,  \omega} + k\ell  |S_{\gamma}^{-1} \zeta|^2_{h^F e^{-k|w'|^2} \sigma_r, \widetilde \omega}  \\
&\gtrsim \alpha | \zeta |^2_{h^F e^{-k|w'|^2} \sigma_r ,  \omega} + k \ell | \zeta |^2_{h^F e^{-k |w'|^2} \sigma_r, \omega} \\
&\gtrsim (\alpha+ k \ell ) | \zeta|^2_{h^F e^{-k |w'|^2} \sigma_r,  \omega}
\end{aligned}
\end{equation}
for any $\zeta \in \big( \Lambda^{n,\ell+1} T_{\Sigma}^* \otimes \big( \Lambda^{(r,0)} T^*_{U_p - D_b} \otimes F \otimes K^{-1}_{U_p - D_b}\big) \big)$. 
Since $\omega_{\epsilon} \geq \omega$, by the proof of Lemma~6.3 in Chapter VIII of \cite{Dem}, for any $\epsilon>0$,
$$
\langle [\sqrt{-1}\Theta(\Lambda^{(r,0)} T^*_{U_p - D_b} \otimes F \otimes K^{-1}_{U_p - D_b}), \Lambda_{\omega_{\epsilon}}] \zeta, \zeta \rangle_{h^F e^{-k|w'|^2} \sigma_r, \omega_{\epsilon}} >0
$$
in bidegree $(n,\ell+1)$ for sufficiently big $k$ and $\ell\geq 1$ due to \eqref{omega_inequality_for (r,s)_1}. 
Therefore, by the proof of Theorem 6.1 in Chapter VIII of \cite{Dem}, if $k>0$ is sufficiently large, it follows that   
\begin{equation} \label{vanishing for (r,s) 1}
\begin{aligned}
&H^{r,\ell+1}_{L^2,\bar \partial}(U_p - D_{b}, F, h^F e^{-k |w'|^2} , \omega) \\
&\cong H^{0,\ell+1}_{L^2, \bar \partial} (U_p - D_{b}, \Lambda^{(r,0)} T_{U_p - D_b}^* \otimes F,  h^F e^{-k|w'|^2} \sigma_r, \omega) \\
&\cong H^{n,\ell+1}_{L^2, \bar \partial}(U_p - D_{b}, \Lambda^{(r,0)} T_{U_p - D_b}^* \otimes F \otimes K^{-1}_{U_p - D_b}, h^F e^{-k |w'|^2} (\det \omega)^{-1} \sigma_r , \omega) = 0
\end{aligned}
\end{equation}
when $1 \leq \ell \leq n$. 

If $r=0$ or $n$, $\Lambda^{(r,0)} T_{\Sigma}^* \otimes F$ is Nakano positive by the Nakano positivity of $T_{\Sigma}^*$. Hence by Corollary \ref{basic estimate by Kahler potential 2}
\begin{equation}\label{vanishing for (r,s) 2}
H^{n,\ell+1}_{L^2, \bar \partial} (U_p - D_b, \Lambda^{(r,0)} T_{U_p- D_b}^* \otimes F, {h^F} \sigma_r, \omega_{1}) =0
\quad \text{ for any } 0 \leq \ell \leq n
\end{equation}
because $  -\log (-\log \| w \|) - \sum_{i=1}^{n} \big( \frac{1}{\delta^2 - |w_i|^2 } \big) $ is a K\"ahler potential of $\omega_{1}$ which has uniformly bounded norm for $\omega_1$. 

Now, we define a holomorphic frame $\{d \tau_i \}$ for $T_{\Sigma}^*$ on $U_p - D_p$ by 
$$
d \tau_i := \frac{d w_i}{\sqrt{(-\log \| q \|)}}, \quad i=1,\cdots,n-1 \quad \text{and} \quad  d \tau_n := \frac{d w_n}{ \| q \| (-\log \| q\|)}
$$
and define
$$
\lambda_i := 1+ (-\log \|w \|) \frac{\delta^2 + |w_i|^2}{(\delta^2 - |w_i|^2)^3}, \quad i=1,\cdots, n-1 
$$
and
$$
\lambda_n := \bigg( 1 + \| w\|^2 (-\log \|w \|)^2 \bigg) \frac{\delta^2 + |w_n|^2}{(\delta^2 - |w_n|^2)^3}.
$$
Then $\widetilde \omega$ in Lemma~\ref{quasi-isometric}  is equal to $ \sum_{i=1}^{n} d\tau_i \wedge d \bar \tau_i$ and $$
\sum_{i=1}^{n} \lambda_i d \tau_i \wedge d \bar \tau_i  = \widetilde \omega + \sqrt{-1} \partial \bar \partial \bigg( \sum_{i=1}^{n} \frac{1}{\delta^2 - |w_i|^2} \bigg) \sim \omega_1.
$$
Now, for any $(0,s)$ form $\alpha$ 
$$
|\alpha \wedge d \tau_1 \wedge \cdots \wedge d \tau_n |^2_{\omega_1} dV_{\omega_1}  \lesssim |\alpha|^2_{\omega} dV_{\omega} \quad \text{on} \quad U_p - D_b
$$
and for any $(0,0)$ form $u$,
$$
|u d\tau_1 \wedge \cdots \wedge d \tau_n |^2_{\omega_1} dV_{\omega_1} \approx |u|^2 dV_{\omega} \quad \text{on} \quad U_p - D_b
$$
by the following proof of Lemma 6.3 in Chapter VIII of \cite{Dem} with using $\omega_1 \sim \sum_{i=1}^{n} \lambda_i d \tau_i \wedge d \bar \tau_i$ and $\omega \sim \widetilde \omega$. Hence under the trivialization of $K_{\Sigma} |_{U_p - T_p}$ induced by $(\tau_1, \cdots, \tau_n)$ on $U_p - D_p$, it follows that $$
L^{0,0}_{2} (U_p-D_p, \Lambda^{(r,0)} T_{\Sigma}^* \otimes F, h^F \sigma_r, \omega) \cong L^{n,0}_{2} (U_p-D_p, \Lambda^{(r,0)} T_{\Sigma}^* \otimes F, h^F \sigma_r, \omega_1)
$$
and
$$
L^{0,1}_{2} (U_p-D_p, \Lambda^{(r,0)} T_{\Sigma}^* \otimes F, h^F \sigma_r, \omega) \hookrightarrow L^{n,1}_{2} (U_p-D_p, \Lambda^{(r,0)} T_{\Sigma}^* \otimes F, h^F \sigma_r, \omega_1).
$$
Therefore, by \eqref{vanishing for (r,s) 2} it follows that
\begin{equation}\label{vanishing for (r,1)}
H^{0,1}_{L^2, \bar \partial} (U_p - D_b, \Lambda^{(r,0)} T_{U_p- D_b}^* \otimes F, {h^F}\sigma_r, \omega) =0.
\end{equation}
Since $\pm |w'|^2$ is locally integrable for the Lebesgue measure, the sequence of Lemma is exact if we take a smaller open neighborhood $V_p \subset \subset U_p$ of $p$ by \eqref{vanishing for (r,s) 1} and \eqref{vanishing for (r,1)}. Therefore, the proof is completed.
\end{proof}

\subsection{An $L^2$-version of Dolbeault-Grothendieck lemma for $1 \leq r \leq n-1$ }

In this subsection, to control intermediate cases for $r$, i.e. for $1 \leq r \leq n-1$, $\ell=0$ in Proposition~\ref{exactness for (r,s) 2}, we prove the following lemma using the argument of Chen given in \cite{Ch11}.

\begin{lemma}\label{local L2 estimate}
Let $\Gamma$ be a torsion-free lattice of $\text{Aut}(\mathbb{B}^n)$ with only unipotent parabolic automorphisms and let $\Sigma= \mathbb{B}^n/\Gamma$ be a ball quotient with finite volume. Let $b$ be a cusp of $\Gamma$.  For each point $p \in D_{b}$, let $\mathbb{D}^n(2\epsilon)$ be an open neighborhood of $p$ given in Lemma \ref{quasi-isometric} satisfying $p \in \mathbb{D}^n(2\epsilon) \subset \subset \Omega_{b}^{(N)}$. Let $\tau$ be a smooth plurisubharmonic function on $\mathbb{D}^{n-1} (2\epsilon) \times \mathbb{D}^*(2\epsilon)$.
Then for each { $s \geq 0$ and $\ell \geq 0$}, there exists a positive constant $C$, which is independent of $\epsilon$, and exists a positive constant $ \epsilon ' \leq \epsilon $ such that for any $(0,1)$ form 
$$
\alpha = \sum_{j=1}^{n} \alpha_j d \bar w_j, \quad \bar \partial \alpha =0
$$ 
on $\mathbb{D}^{n-1}(2 \epsilon) \times \mathbb{D}^*(2 \epsilon) $ { satisfying} 
\begin{equation}\label{norm for (0,1)}
\begin{aligned}
\| \alpha \|^2_{s, \ell, \tau} 
&:= \sum_{j=1}^{n-1} \int_{\mathbb{D}^{n-1} (2\epsilon) \times \mathbb{D}^*(2\epsilon) } |\alpha_j|^2 {(-\log \|w \|)}^{s-1}  \|w\|^{2 (\ell-1)} e^{-\tau} dV \\
&\quad+ \int_{\mathbb{D}^{n-1}(2\epsilon) \times \mathbb{D}^* (2\epsilon) } |\alpha_n|^2 (-\log \|w \|)^{s} \| w \|^{2 \ell} e^{-\tau} dV < \infty,
\end{aligned}
\end{equation}
there exists a solution $u$ of the $\bar\partial$-equation $\bar \partial u = \alpha$ on $\mathbb{D}^{n-1}(2\epsilon) \times \mathbb{D}^* (\epsilon') $ and it satisfies
\begin{equation}\nonumber
\int_{\mathbb{D}^{n-1} (2\epsilon) \times \mathbb{D}^* (\epsilon') } |u|^2 {\| w \|^{2 (\ell-1)} (-\log \| w \|)^{s-2}} e^{-\tau} dV \leq C  \| \alpha \|^2_{s,\ell,\tau}
\end{equation}
where $ dV $ is the Lebesgue measure. 
\end{lemma}

\begin{proof}
Let $\chi: \mathbb{R} \rightarrow [0,1]$ be a smooth function satisfying $\chi |_{(-\infty, \frac{1}{4})}=1$ and $\chi |_{(\frac{3}{4},\infty)}=0$.  Let $ \delta \leq \epsilon$ be a small positive constant such that 
\begin{equation}\nonumber
|w_n|^2 + \delta^2 <e^{-e}
\end{equation} 
on $\mathbb{D}^n(2\epsilon)$ and let 
\begin{enumerate}
\item $\varphi$ : a smooth plurisubharmonic function on { $\mathbb{D}^{n-1}(2 \epsilon) \times \mathbb{D}^* (2 \epsilon)$ }
\item $\rho = \log (| w_n|^2 + \delta^2)$
\item $\eta = - \rho + \log (- \rho)$
\item $\psi = - \log \eta$
\item $\phi = |w'|^2 + \varphi + \log (| w _n|^2 + \delta^2)$.
\end{enumerate}
Since
\begin{equation}\label{complex hessian}
\partial \bar \partial \psi =  - \frac{\partial \bar \partial \eta}{\eta} + \frac{\partial \eta \wedge \bar \partial \eta}{\eta^2} =(1 + (-\rho)^{-1}) \frac{\partial \bar \partial \rho}{\eta} + \frac{\partial \rho \wedge \bar \partial \rho}{\eta \rho^2} + \frac{\partial \eta \wedge \bar \partial \eta}{\eta^2}, 
\end{equation}
and
$$
\partial \bar \partial \rho = \frac{\delta^2 dw_n \wedge d \bar w_n} {(|w_n|^2 + \delta^2)^2},
$$
$\psi$ is a smooth plurisubharmonic function.

Let 
$$
v := \alpha \chi \bigg( \frac{| w_n |^2}{ \delta ^2}   \bigg).
$$
Since $\alpha$ is $L^2$-integrable on $\mathbb{D}^n (2 \epsilon)$ for $| w_n |^{2\ell} dV$ by the finiteness of \eqref{norm for (0,1)} and $\|w \| \sim |w_n|$, we obtain 
$$
v \in L^2(\mathbb{D}^{n-1} (2\epsilon) \times \mathbb{D}^*(\epsilon), |w_n|^{2\ell}),
$$ 
where $dV$ is the Lebesgue measure. Hence, by the pseudoconvexity of $\mathbb{D}^{n-1}(2\epsilon) \times \mathbb{D}^* (\epsilon)$ and pluriharmonicity of $\log |w_n|^2$, the minimal solution $u_{\delta}$ of $\bar \partial u_{\delta} = v$ exists in $L^2(\mathbb{D}^{n-1} (2\epsilon) \times \mathbb{D}^* (\epsilon), e^{-\phi} |w_n|^{2\ell})$. Moreover, since $\psi$ is a bounded function, $u_{\delta} e^{\psi}$ is orthogonal to $L^2(\mathbb{D}^{n-1} (2\epsilon) \times \mathbb{D}^*(\epsilon), e^{-\phi -\psi} |w_n|^{2\ell})$. 
Therefore,
$$
\bar \partial (u_{\delta} e^{\psi}) = e^{\psi} (v  + u _{\delta} \bar \partial \psi) 
$$
and 
$$
\int_{\mathbb{D}^{n-1}(2\epsilon) \times \mathbb{D}^* (\epsilon)} |u_{\delta} e^{\psi}|^2 e^{-\psi-\phi} |w_n|^{2\ell} dV \leq \int_{\mathbb{D}^{n-1}(2\epsilon) \times \mathbb{D}^* (\epsilon)} |e^{\psi} (v+ u_{\delta} \bar \partial \psi ) |^2_{i \partial \bar \partial (\psi + \phi)} e^{-\psi - \phi} |w_n|^{2\ell} dV.
$$
By this reason, for any $r > 0 $, it follows that
\begin{equation*}
\begin{aligned}
&\int_{\mathbb{D}^{n-1}(2\epsilon) \times \mathbb{D}^* (\epsilon) } |u_{\delta}|^2 e^{\psi- \phi} |w_n|^{2\ell} dV \\
& \leq \int_{\mathbb{D}^{n-1}(2\epsilon) \times \mathbb{D}^* (\epsilon)} \langle v+ u_{\delta} \bar \partial \psi, v + u_{\delta} \bar \partial \psi \rangle_{i \partial \bar \partial (\psi + \phi)} e^{-\psi - \phi} |w_n|^{2\ell} dV\\
&\leq \int_{\mathbb{D}^{n-1}(2\epsilon) \times \mathbb{D}^* (\epsilon)} |v|^2_{i \partial \bar \partial (\psi + \phi)} e^{\psi - \phi}|w_n|^{2\ell} dV + 2 \text{Re} \int_{\mathbb{D}^{n-1}(2\epsilon) \times \mathbb{D}^*(\epsilon)} \langle v , u_{\delta} \bar \partial \psi \rangle_{i \partial \bar \partial (\psi + \phi)} e^{\psi- \phi}|w_n|^{2\ell} dV  \\
&\quad + \int_{\mathbb{D}^{n-1}(2\epsilon) \times \mathbb{D}^*(\epsilon)} |u_{\delta}|^2 |\bar \partial \psi |^2_{i \partial \bar \partial (\psi+ \phi)} |w_n|^{2\ell} dV\\
&\leq \bigg( 1+ \frac{1}{r} \bigg) \int_{\mathbb{D}^{n-1} (2\epsilon) \times \mathbb{D}^* (\epsilon) } |v|^2_{i \partial \bar \partial (\psi + \phi)} e^{\psi - \phi}|w_n|^{2\ell} dV + \int_{\mathbb{D}^{n-1}(2\epsilon) \times \mathbb{D}^* (\epsilon) } |\bar \partial \psi|^2_{\partial \bar \partial (\phi + \psi)} |u_{\delta}|^2 e^{\psi - \phi}|w_n|^{2\ell} dV \\
&\quad + r \int_{\textbf{supp} v} | \bar \partial \psi|^2_{\partial \bar \partial (\phi + \psi)} |u_{\delta}|^2 e^{\psi - \phi}|w_n|^{2\ell} dV\\
&=: \left(1+\frac{1}{r}\right) I+II+ r III
\end{aligned}
\end{equation*}  
using the Cauchy-Schwarz inequality and arithmetic–geometric mean. 

By \eqref{complex hessian} and
$$
\partial \eta \wedge \bar \partial \eta = (1 + (-\rho)^{-1})^{2} \partial \rho \wedge \bar \partial \rho ,
$$
we know that
$$
\partial \bar \partial \psi \geq \frac{\partial \rho \wedge \bar \partial \rho}{\eta \rho^2} + \frac{\partial \eta \wedge \bar \partial \eta}{\eta^2} = \bigg( \frac{1}{\eta^2} + \frac{1}{\eta (-\rho +1)^2} \bigg) \partial \eta \wedge \bar \partial \eta = \bigg( 1 + \frac{\eta}{ (-\rho +1)^2} \bigg)  \partial \psi \wedge \bar \partial \psi  . 
$$
Therefore,
\begin{equation}\label{II}
\begin{aligned}
II&= \int_{\mathbb{D}^{n-1} (2\epsilon) \times \mathbb{D}^*(\epsilon)} |\bar \partial \psi|^2_{\partial \bar \partial (\phi + \psi)} |u_{\delta}|^2 e^{\psi - \phi} |w_n|^{2\ell} dV\\ &\leq \int_{\mathbb{D}^{n-1}(2\epsilon) \times \mathbb{D}^* (\epsilon)} \frac{|u_{\delta}|^2}{1+ \frac{\eta}{(-\rho+1)^2}} e^{\psi-\phi}|w_n|^{2\ell} dV.
\end{aligned}
\end{equation}
Since
$$
\partial \psi \wedge \bar \partial \psi = \eta^{-2}( 1 + (-\rho)^{-1})^2 \partial \rho \wedge \bar \partial \rho \leq \frac{4}{\eta^2} \partial \rho \wedge \bar \partial \rho
$$
and 
\begin{equation*}
\begin{aligned}
\partial \bar \partial \psi \geq   (1+(-\rho)^{-1}) \frac{\partial \bar \partial \rho}{\eta} &\geq \frac{ \delta^2 }{ \eta (| w_n |^2 + \delta^2)^2} \partial w_n \wedge \bar \partial w_n  \\
& \geq \frac{ | w_n |^2 }{\eta (| w_n |^2 + \delta^2)^2} \partial w_n \wedge \bar \partial w_n = \frac{\partial \rho \wedge \bar \partial \rho}{\eta}
\end{aligned}
\end{equation*}
on $\textbf{supp} v$,
we obtain that
\begin{equation}\label{III}
III = \int_{\textbf{supp} v} | \bar \partial \psi|^2_{\partial \bar \partial (\phi + \psi)} |u_{\delta}|^2 e^{\psi-\phi} |w_n|^{2\ell} \leq \int_{\mathbb{D}^n(2\epsilon) \times \mathbb{D}^{*}(\epsilon)} \frac{4}{\eta} |u_{\delta}|^2 e^{\psi-\phi} |w_n|^{2\ell} dV.
\end{equation}
Now, let us consider I.
We have
\begin{equation*}
\begin{aligned}
&I = \int_{\mathbb{D}^{n-1}(2\epsilon) \times \mathbb{D}^{*}(\epsilon) } |v|^2_{i \partial \bar \partial (\psi + \phi)} e^{\psi - \phi} |w_n|^{2\ell} dV\\
&= \int_{\mathbb{D}^{n-1}(2\epsilon) \times \mathbb{D}^{*}(\epsilon) } \bigg| \alpha \chi \bigg( \frac{| w_n |^2}{ \delta ^2} \bigg) \bigg|^2_{i \partial \bar \partial (\psi + \phi)} e^{\psi - \phi} |w_n|^{2\ell} dV \\
&\leq \int_{ \mathbb{D}^{n-1}(2\epsilon) \times \{ 0 <  |w_n|^2 <  \frac{ {3}  }{4}  \delta^2  \}   }  |\alpha|^2_{i \partial \bar \partial (\psi+ \phi)}  e^{\psi-\phi} |w_n|^{2\ell} dV\\
&\leq  \int_{ \mathbb{D}^{n-1}(2\epsilon) \times \{ 0 <  |w_n|^2 <  \frac{ {3}  }{4}  \delta^2  \} }  |\alpha|^2_{i \partial \bar \partial (\psi +\phi) } \frac{|w_n|^{2\ell}}{\eta} \frac{1}{| w_n |^2 + \delta^2}  e^{-\varphi} e^{-|w'|^2}  dV,
\end{aligned}
\end{equation*}
since 
$$
e^{\psi-\phi} = e^{-\log \eta} \cdot e^{-(|w'|^2 + \varphi + \log(|w_n|^2 + \delta^2) )} = \frac{1}{\eta} \frac{e^{-\varphi}  e^{-|w'|^2}  }{|w_n|^2 + \delta^2 }
$$
 by the definition of $\psi$ and $\phi$.
Moreover, by 
$$
\partial \bar \partial (\phi + \psi) \geq  \partial \bar \partial |w'|^2 + (1+(-\rho)^{-1}) \frac{\partial \bar \partial \rho}{\eta} \geq \sum_{j=1}^{n-1} d w_j \wedge d \bar w_j + \frac{\delta^2}{\eta (| w_n |^2 + \delta^2)^2} d w_n \wedge d \bar w_n,
$$
it follows that
\begin{equation}\label{upper norm of alpha} 
|\alpha|^2_{i \partial \bar \partial (\psi+\phi)} \leq  \frac{\eta (| w_n |^2 + \delta^2)^2 }{ \delta^2 }  |\alpha_n |^2  + \sum_{j=1}^{n-1} |\alpha_j|^2   .
\end{equation}
Hence by \eqref{upper norm of alpha} 
\begin{equation}\label{I}
\begin{aligned}
I &\leq  \int_{ \mathbb{D}^{n-1}(2\epsilon) \times  \{ 0 <  |w_n|^2 <  \frac{ {3}  }{4}  \delta^2  \}  }   |\alpha|^2_{i \partial \bar \partial (\psi +\phi) } \frac{|w_n|^{2\ell}}{\eta} \frac{1}{| w_n |^2 + \delta^2}  e^{-\varphi} e^{-|w'|^2}  dV \\
& \leq \int_{ \mathbb{D}^{n-1}(2\epsilon) \times \{ 0 <  |w_n|^2 <  \frac{ {3}  }{4}  \delta^2  \}  }  |\alpha_n |^2 {|w_n|^{2\ell}}\frac{|w_n|^2 + \delta ^2}{ \delta^2}  e^{-\varphi} e^{-|w'|^2}  dV   \\
&\quad + \sum_{j=1}^{n-1} \int_{ \mathbb{D}^{n-1}(2\epsilon) \times  \{ 0 <  |w_n|^2 <  \frac{ {3}  }{4}  \delta^2  \}}   |\alpha_j |^2 |w_n|^{2\ell} \frac{1}{ \eta (| w_n |^2+\delta^2) } e^{-\varphi}  e^{-|w'|^2} dV  \\
&\leq 2 \int_{\mathbb{D}^{n-1}(2 \epsilon) \times \mathbb{D}^* (\epsilon)} |\alpha_n |^2 |w_n|^{2\ell} e^{-\varphi}  e^{-|w'|^2} dV + \sum_{j=1}^{n-1} \int_{\mathbb{D}^{n-1}(2 \epsilon) \times \mathbb{D}^* (\epsilon) }  |\alpha_j |^2 |w_n|^{2\ell}   
\frac{e^{-\varphi} e^{-|w'|^2}}{ \eta (| w_n |^2+\delta^2) }   dV.
\end{aligned}
\end{equation} 
The third inequality holds because 
$\frac{|w_n|^2+\delta^2}{\delta^2} \leq 2$ when $0<|w_n|^2 < \frac{3}{4}\delta^2$.

As a result, by \eqref{I}, \eqref{II} and \eqref{III} we obtain
\begin{equation*}
\begin{aligned}
&\int_{\mathbb{D}^{n-1}(2\epsilon)\times \mathbb{D}^*(\epsilon)} \bigg( \frac{\frac{\eta }{(-\rho +1)^2 } } { 1 + \frac{\eta}{(-\rho +1)^2}} - \frac{4 r} {\eta} \bigg) |u_{\delta}|^2 e^{\psi- \phi} |w_n|^{2\ell} dV \\
& \leq \bigg( 1+ \frac{1}{r} \bigg) \bigg( 2 \int_{\mathbb{D}^{n-1}(2 \epsilon) \times \mathbb{D}^* (\epsilon)} |\alpha_n |^2  |w_n|^{2\ell} e^{-\varphi} {  e^{-|w'|^2} }  dV  
+\sum_{j=1}^{n-1} \int_{\mathbb{D}^{n-1}(2 \epsilon) \times \mathbb{D}^* (\epsilon) }  |\alpha_j |^2 \frac{|w_n|^{2\ell}e^{-\varphi}e^{-|w'|^2} }{ \eta (| w_n |^2+\delta^2) }   dV \bigg).
\end{aligned}
\end{equation*}  

Since
\begin{equation}\nonumber
\frac{\eta}{-\rho} = 1 + \frac{\log (-\rho)}{-\rho} \rightarrow 1,
\end{equation}
as $|w_n| \rightarrow 0$ and $\delta \rightarrow 0$, by taking a sufficiently small $r>0$ and $\epsilon ' >0 $, we obtain a positive constant $C_{r, \epsilon'} >0$ such that  
\begin{equation*}
\begin{aligned}
&\int_{\mathbb{D}^{n-1}(2\epsilon) \times {\mathbb{D}}^* (\epsilon ' ) }  \frac{|w_n|^{2\ell} } { (| w_n |^2 + \delta^2) (-\log (| w_n |^2 + \delta^2))^2} |u_{\delta}|^2 e^{-\varphi} dV \\
& \leq C_{r,\epsilon'} \bigg(  \int_{\mathbb{D}^{n-1}(2 \epsilon) \times \mathbb{D}^* (\epsilon ')} |\alpha_n |^2 e^{-\varphi} |w_n|^{2\ell} dV  
+\sum_{j=1}^{n-1} \int_{\mathbb{D}^{n-1}(2 \epsilon) \times \mathbb{D}^* (\epsilon ') }  |\alpha_j |^2 \frac{e^{-\varphi}|w_n|^{2\ell}}{ -\rho (| w_n |^2+\delta^2) }  dV\bigg).
\end{aligned}
\end{equation*}
This inequality holds, since $e^{-|w'|^2}$ has a positive upper and lower bound on $\mathbb{D}^{n-1}(2\epsilon)$. 

If we set $\varphi = - s \log (-\rho) + \tau$,  then it is still plurisubharmonic when $s \geq 0$ and $e^{-\varphi}=(-\rho)^{s}e^{-\tau}$. So,
\begin{equation*}
\begin{aligned}
&\int_{\mathbb{D}^{n-1}(2\epsilon) \times \mathbb{D}^* (\epsilon ' )}  \frac{ |w_n|^{2\ell} } { (| w_n |^2+\delta^2)} (-\log (|w_n|^2 + \delta^2) )^{s-2} |u_{\delta}|^2 e^{-\tau} dV \\
& \leq C_{r,\epsilon'} \bigg(  \int_{\mathbb{D}^{n-1}(2 \epsilon) \times \mathbb{D}^* (\epsilon ')} |\alpha_n |^2 (-\rho)^{s} |w_n|^{2\ell} e^{-\tau} dV  
+\sum_{j=1}^{n-1} \int_{\mathbb{D}^{n-1}(2 \epsilon) \times \mathbb{D}^* (\epsilon ') }  |\alpha_j |^2 \frac{ |w_n|^{2\ell}}{ | w_n |^2+\delta^2  } (-\rho)^{s-1} e^{-\tau} dV \bigg) \\
&\leq 2 C_{r,\epsilon'} \bigg(  \int_{\mathbb{D}^{n-1}(2 \epsilon) \times \mathbb{D}^* (\epsilon ')} |\alpha_n |^2 (- \log |w_n| )^{s} |w_n|^{2\ell} e^{-\tau} dV  \\
&\quad \quad +\sum_{j=1}^{n-1} \int_{\mathbb{D}^{n-1}(2 \epsilon) \times \mathbb{D}^* (\epsilon ') }  |\alpha_j |^2  \frac{1}{| w_n |^2 } |w_n|^{2\ell} (-\log |w_n|)^{s-1} e^{-\tau} dV \bigg)
\end{aligned}
\end{equation*}
Here, we use the facts that $x^{s}$ is increasing when $s > 0$ and $(x (-\log x))^{-1}$ is decreasing when $0<x<e^{-1}$.  Therefore, since
$
\| w \|^2 \sim |w_n|^2
$
and
$
-\log \| w \| \sim - \log |w_n|
$
on $\mathbb{D}^{n}(2\epsilon)$, the lemma is proved when $s \geq 0$ by letting $\delta \rightarrow 0$ and using a weak limit argument. 
\end{proof}

\begin{proposition}
Let $\Gamma$ be a torsion-free lattice of $\text{Aut}(\mathbb{B}^n)$ with only unipotent parabolic automorphisms and $\Sigma = \mathbb{B}^n/ \Gamma$ be a ball quotient with finite volume. Let $F=S^m T_{\Sigma}^*$. If $m \geq n-1$, then for every $r \geq 0$, the following sequence
\begin{equation}\label{local dolbeault for (r,0) 2}
0 \rightarrow \ker \bar\partial_{(r,0),F}  \xrightarrow {} L^{(r,0)}_{2,F} \xrightarrow {\bar \partial} \ker \bar \partial_{(r,1),F} \rightarrow 0
\end{equation}
is exact.
\end{proposition}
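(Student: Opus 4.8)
The plan is to observe that, by Proposition~\ref{exactness for (r,s) 2}, the only part of \eqref{local dolbeault for (r,0) 2} not yet available is the surjectivity of $\bar\partial\colon L^{r,0}_{2,F}\to\ker\bar\partial_{(r,1),F}$ at points of the boundary divisors (for interior points of $\Sigma$ this is the classical Dolbeault--Grothendieck lemma, and exactness of $0\to\ker\bar\partial_{(r,0),F}\to L^{r,0}_{2,F}$ holds by definition). So fix a cusp $b$, a point $p\in D_b$, and a coordinate polydisc $p\in\mathbb{D}^{n-1}(2\epsilon)\times\mathbb{D}^*(2\epsilon)\subset\subset\Omega_b^{(N)}$ with euclidean coordinates $(w_1,\dots,w_n)$ as in Lemma~\ref{quasi-isometric}, small enough that a representative of the given germ of $\ker\bar\partial_{(r,1),F}$ is defined and $L^2$ on it. Then $\omega\sim\widetilde\omega$, so $\|dw_k\|^2_{\omega^{-1}}\sim-\log\|w\|$ for $k<n$, $\|dw_n\|^2_{\omega^{-1}}\sim\|w\|^2(-\log\|w\|)^2$, and $dV_\omega\sim\|w\|^{-2}(-\log\|w\|)^{-(n+1)}\,dV$ with $dV$ the Lebesgue measure; moreover $\|dw^I\|^2_{h^F}\sim\prod_{j}\|dw_j\|^{2i_j}_{\omega^{-1}}$ for the degree-$m$ monomial $dw^I$ with multiplicities $i_j$, since on this polydisc the fibre metric on $S^mT_\Sigma^*$ is quasi-isometric to the diagonal one in the coordinates $(w_1,\dots,w_n)$.

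Given $\beta\in L^{r,1}_{2,F}$ on this polydisc with $\bar\partial\beta=0$, decompose
\[
\beta=\sum_{|J|=r,\ |I|=m}\alpha^{(J,I)}\wedge\big(dw_J\otimes dw^I\big),\qquad \alpha^{(J,I)}=\sum_{k=1}^n\beta_{J,I,k}\,d\bar w_k,
\]
where $dw_J=dw_{j_1}\wedge\cdots\wedge dw_{j_r}$. Since $dw_J\otimes dw^I$ is a holomorphic local frame for $\Lambda^{r,0}T_\Sigma^*\otimes S^mT_\Sigma^*$, the equation $\bar\partial\beta=0$ forces $\bar\partial\alpha^{(J,I)}=0$ for every $(J,I)$. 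For fixed $(J,I)$ put $a_J=1$ if $n\in J$ and $a_J=0$ otherwise, let $i_n$ be the multiplicity of $n$ in $I$, and set $\ell(J,I):=a_J+i_n\ge0$ and $s(J,I):=r+m+\ell(J,I)-n+1$. Using $\|dw_J\|^2_{\omega^{-1}}\sim\|w\|^{2a_J}(-\log\|w\|)^{r+a_J}$, $\|dw^I\|^2_{h^F}\sim\|w\|^{2i_n}(-\log\|w\|)^{m+i_n}$, the values of $\|d\bar w_k\|^2_{\omega^{-1}}$, and the asymptotics of $dV_\omega$ above, a direct count of powers of $\|w\|$ and $-\log\|w\|$ shows that the terms of $\alpha^{(J,I)}$ with $k<n$ match the first sum and the term with $k=n$ matches the second sum of the norm \eqref{norm for (0,1)} for this $(s,\ell)$, hence
\[
\sum_{|J|=r,\ |I|=m}\|\alpha^{(J,I)}\|^2_{s(J,I),\,\ell(J,I),\,0}\ \sim\ \|\beta\|^2_{h^F,\omega}\ <\ \infty .
\]
The hypothesis $m\ge n-1$ enters exactly here: it gives $s(J,I)=r+m+\ell(J,I)-n+1\ge m-n+1\ge0$ for every $(J,I)$, which is what Lemma~\ref{local L2 estimate} requires (take $\tau\equiv0$ there).

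Applying Lemma~\ref{local L2 estimate} to each $\alpha^{(J,I)}$ yields, on $\mathbb{D}^{n-1}(2\epsilon)\times\mathbb{D}^*(\epsilon')$ for a common $\epsilon'\le\epsilon$ (the minimum of the finitely many radii produced), a solution $u^{(J,I)}$ of $\bar\partial u^{(J,I)}=\alpha^{(J,I)}$ with
\[
\int|u^{(J,I)}|^2\,\|w\|^{2(\ell(J,I)-1)}(-\log\|w\|)^{s(J,I)-2}\,dV\ \le\ C\,\|\alpha^{(J,I)}\|^2_{s(J,I),\,\ell(J,I),\,0}.
\]
Because $s(J,I)-2=r+m+\ell(J,I)-n-1$, the left-hand side is, up to constants, the $L^{r,0}_{2,F}$-norm of $u^{(J,I)}\,dw_J\otimes dw^I$; hence $u:=\sum_{J,I}u^{(J,I)}\,dw_J\otimes dw^I$ lies in $L^{r,0}_{2,F}$ on $\mathbb{D}^{n-1}(2\epsilon)\times\mathbb{D}^*(\epsilon')$ and $\bar\partial u=\sum_{J,I}\alpha^{(J,I)}\wedge(dw_J\otimes dw^I)=\beta$. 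Since $p\in D_b$ was arbitrary, $\bar\partial$ is surjective onto $\ker\bar\partial_{(r,1),F}$ on a neighbourhood basis of every boundary point, and together with interior exactness this proves that \eqref{local dolbeault for (r,0) 2} is exact.

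The genuine analytic difficulty is already packaged into Lemma~\ref{local L2 estimate}, whose proof follows Berndtsson--Charpentier's version of the Donnelly--Fefferman estimate as in \cite{Ch11}; the main work in the argument above is the exponent bookkeeping, namely checking that for each holomorphic frame element the associated scalar $(0,1)$-part carries exactly the weight \eqref{norm for (0,1)} with $s=r+m+\ell-n+1$ and $\ell=a_J+i_n$, and that $m\ge n-1$ is precisely the condition making all these $s$ nonnegative.
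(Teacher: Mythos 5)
Your proposal is correct and follows essentially the same route as the paper: decompose the $\bar\partial$-closed $F$-valued $(r,1)$-form into scalar $(0,1)$-components along the holomorphic frame $dw_J\otimes dw^I$, translate the $L^2$-condition into the weighted norm of Lemma~\ref{local L2 estimate}, and solve componentwise near each boundary point. The only (immaterial) difference is bookkeeping: the paper keeps $\ell=i_n$ and $s=m+i_n-(n-1)$ and absorbs the $\Lambda^{r,0}$-frame factors into the plurisubharmonic weight $\tau$, whereas you fold them into the exponents $s(J,I)$, $\ell(J,I)$ and take $\tau\equiv 0$; the two parametrizations give the identical estimate and use $m\ge n-1$ in the same way.
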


\begin{proof}

For any open set in $\Sigma$, the exactness of \eqref{local dolbeault for (r,0) 2} is well known and hence we only need to consider open sets intersecting with the corresponding divisor $D_{b}$ in $\overline{\Sigma}$ of a cusp $b$ of $\Gamma$. Consider a divisor $D_{b} \subset \overline \Sigma$ and fix a point $p \in D_{b}$. {Take a sufficiently small polydisc } $p \in U_p = \mathbb{D}^{n-1}(\epsilon) \times \mathbb{D}^* (\epsilon) \subset \subset \Omega_{b}^{(N)}$ of $q$ such that there exists a holomorphic coordinate system $(w_1,\cdots,w_n)$ on an open set $W_p$ containing $\overline{U_p}$ which satisfies the statements of Lemma~\ref{quasi-isometric} and \ref{comparison between norms}. 

To verify the exactness of \eqref{local dolbeault for (r,0) 2}, it suffices to show that there exists an open set $p \in V_p \subseteq U_p$ such that the following $\bar \partial$-equation 
\begin{equation}\label{system for (r,1) 1}
\bar \partial u = f, \quad \bar \partial f=0 ~~ \text{and} ~~ f \in L^{(r,1)}_{2} (U_p, F)
\end{equation}
has a solution $u \in L^{(r,0)}_{2,F} (V_p) $ satisfying 
\begin{equation}\nonumber
\int_{V_p - D_b} |u|^2_{h^F, \omega} dV_{\omega} \leq C \int_{U_p - D_b} |f|^2_{h^F,\omega} dV_{\omega}
\end{equation}
for some constant $C>0$.

Now, write
$$
u = \sum_{\substack{|I|=m \\ j_1 < \cdots < j_r } } u_{j_1 \cdots j_r , I} dw_{j_1} \wedge \cdots \wedge dw_{j_r} \otimes  e_I$$
and 
$$
f = \sum_{\substack{|I|=m \\ j_1 < \cdots < j_r }} \left( \sum_{\ell=1}^{n} f_{j_1 \cdots j_r, I,\ell} dw_{j_1} \wedge \cdots dw_{j_r} \wedge d\bar w_\ell \right) \otimes e_I
$$
where $e^{I}=dw_1^{i_1} \cdots dw_n^{i_n}$.  Since 
$\omega \sim \widetilde \omega$ on $W_q$, 
$
|u|^2_{\omega} \sim \big| \sum_{|I|=m} u_I e^I     \big |^2_{\widetilde \omega}
$ and so by Lemma \ref{quasi-isometric}

\begin{equation}\label{norm comparison for (r,s) 1}
\begin{aligned}
|u|^2_{\omega} &\sim  \sum_{\substack{|I|=m \\ j_1 < \cdots < j_r} } |u_{j_1 \cdots j_r, I} |^2  (dw_{j_1} , dw_{j_1} )_{\widetilde \omega} \cdots (dw_{j_r}, dw_{j_r})_{\widetilde \omega} \cdot  (dw_1, dw_1)^{i_1}_{\widetilde \omega} \cdots (dw_n, dw_n)^{i_n}_{\widetilde \omega} \\
&\sim \sum_{\substack{|I|=m \\ j_1 < \cdots < j_r} } |u_{j_1 \cdots j_r, I } |^2 ( dw_{j_1} , dw_{j_1} )_{\widetilde \omega} \cdots (dw_{j_r}, dw_{j_r})_{\widetilde \omega} \cdot  \| w \|^{2i_n} (-\log \| w \|)^{m+i_n}.
\end{aligned}
\end{equation}
Similarly,
\begin{equation}\label{norm comparison for (r,s) 2}
\begin{aligned}
|f|^2_{\omega} &\sim \sum_{\substack{|I|=m \\ j_1 < \cdots < j_r}} \sum_{\ell=1}^{n} |f_{j_1 \cdots j_r, I,\ell}|^2  ( dw_{j_1} , dw_{j_1} )_{\widetilde \omega} \cdots (dw_{j_r}, dw_{j_r})_{\widetilde \omega} \cdot (d \bar w_\ell, d \bar w_\ell)_{\widetilde \omega} (e_I, e_I)_{\widetilde \omega} \\
&\sim \sum_{\substack{|I|=m \\ j_1 < \cdots < j_r} } \bigg( \sum_{\ell=1}^{n-1} |f_{j_1 \cdots j_r, I,\ell}|^2 \|w \|^{2 i_n} (-\log \|w \|)^{m+i_n+1}  \\
& \quad \quad +  |f_{j_1 \cdots j_r, I, n}|^2 \|w \|^{2 (i_n + 1)} (-\log \|w \|)^{m+i_n +2 } \bigg) \cdot ( dw_{j_1} , dw_{j_1} )_{\widetilde \omega} \cdots (dw_{j_r}, dw_{j_r})_{\widetilde \omega}  .
\end{aligned}
\end{equation} 
Hence  by \eqref{norm comparison for (r,s) 1}, \eqref{norm comparison for (r,s) 2}, and $$
dV_{\omega} = \frac{dV}{\| w\|^2 (-\log \| w \|)^{n+1}}
$$
where $dV$ is the standard Lebesgue measure, we know that to find a solution $u$ of the $\bar \partial$-equation \eqref{system for (r,1) 1}, it suffices for each fixed $I$ and $j_1 < \cdots < j_r$ to find a solution $u_I$ on $V_p - D_b$ of the following $\bar \partial$-equation 
\begin{equation}\label{system for (r,s) 2}
\bar \partial u_{j_1 \cdots j_r ,I}  = \sum_{\ell=1}^{n} \frac{ \partial u_{j_1\cdots j_r, I}}{\partial \bar w_\ell} d \bar w_\ell  = \sum_{\ell=1}^{n} f_{j_1 \cdots j_r, I,\ell} d \bar w_\ell \quad \text{provided} \quad \bar \partial \bigg( \sum_{\ell=1}^{n} f_{j_1 \cdots j_r, I, \ell} d \bar w_\ell   \bigg)=0
\end{equation}
which satisfies 
\begin{equation*}\label{equivalnce of integral for (r,s) 1}
\begin{aligned}
& \int_{V_p - D_b} |u_{j_1 \cdots j_r, I} |^2 \bigg( \|w \|^{2 (i_n-1) } (-\log \|w \|)^{m+i_n-(n+1)} \bigg) \cdot ( dw_{j_1} , dw_{j_1} )_{\widetilde \omega} \cdots (dw_{j_r}, dw_{j_r})_{\widetilde \omega}  dV \\
&\leq \sum_{\ell=1}^{n-1} \int_{U_p - D_b}  |f_{j_1 \cdots j_r, I,\ell}|^2 \|w \|^{2 (i_n-1)} (-\log \|w \|)^{m+i_n-n}  \cdot ( dw_{j_1} , dw_{j_1} )_{\widetilde \omega} \cdots (dw_{j_r}, dw_{j_r})_{\widetilde \omega}  dV \\
&\quad\quad\quad\quad + \int_{U_p - D_b}  |f_{j_1 \cdots j_r, I, n}|^2 \|w \|^{2 i_n } (-\log \|w \|)^{m+i_n -(n-1) }  \cdot ( dw_{j_1} , dw_{j_1} )_{\widetilde \omega}  \cdots (dw_{j_r}, dw_{j_r})_{\widetilde \omega}  dV.
\end{aligned}
\end{equation*}
Therefore, the solvability of \eqref{system for (r,s) 2} follows by Lemma \ref{local L2 estimate}, by taking $s:=(m+i_n)-(n-1)$ and 
\begin{equation*}
\tau :=     
\left \{ \begin{array}{ll} 
- r \log( - \log \| w \|)  & \text{if $j_r \not = n$},\\
-(r+1) \log (- \log \| w \|) - \log |w_n |^2 & \text{if $j_r = n $}
\end{array} \right.
\end{equation*} 
in the Lemma.
\end{proof}

\begin{corollary}\label{exactness 1}
Let $\Gamma$ be a torsion-free lattice of $\text{Aut}(\mathbb{B}^n)$ with only unipotent parabolic automorphisms and $\Sigma = \mathbb{B}^n/ \Gamma$ be a ball quotient with finite volume. Let $F=S^m T_{\Sigma}^*$. If $m \geq n-1$, then for any  $1 \leq r \leq n-1$, the following sequences
\begin{equation*}
0 \rightarrow \ker \bar\partial_{(r,0),F}  \xrightarrow {} L^{(r,0)}_{2,F} \xrightarrow {\bar \partial} \ker \bar \partial_{(r,1),F} \rightarrow 0
\end{equation*}
are exact.
\end{corollary}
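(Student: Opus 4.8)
The plan is to observe that the displayed sequences are exactly those of the Proposition immediately preceding this corollary, which establishes their exactness for \emph{every} $r\ge 0$ under the hypothesis $m\ge n-1$; so the corollary follows by simply restricting the range of $r$ to $1\le r\le n-1$. The point of isolating this intermediate range is that, together with the cases $1\le \ell\le n$ of Proposition~\ref{exactness for (r,s) 2} (valid for all $r$) and with Lemma~\ref{exactness for (r,s) 0}, it supplies precisely the two resolutions \eqref{exactness1}--\eqref{exactness2} needed to invoke Lemma~\ref{sheaf cohomology} in the degrees $1\le r\le n-1$: the $\ell=0$ case of Proposition~\ref{exactness for (r,s) 2} is proved only for $r=0,n$, so the surjectivity of $\bar\partial$ onto $\ker\bar\partial_{(r,1),F}$ for intermediate $r$ must be obtained separately, and this is what the preceding Proposition --- hence the corollary --- provides.

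For a self-contained proof one reruns the argument of that Proposition. Exactness over open subsets of $\Sigma$ is classical, so one fixes a cusp $b$, a point $p\in D_b$, and a polydisc $p\in U_p=\mathbb{D}^{n-1}(\epsilon)\times\mathbb{D}^*(\epsilon)\subset\subset\Omega_b^{(N)}$ on which Lemmas~\ref{quasi-isometric} and~\ref{comparison between norms} apply, and must produce, for each $\bar\partial$-closed $f\in L^{(r,1)}_2(U_p,F)$, a solution of $\bar\partial u=f$ with an $L^2$ bound on a smaller polydisc. Writing $u$ and $f$ componentwise in the euclidean coordinates and using the quasi-isometry $\omega\sim\widetilde\omega$ together with the comparisons $\|dw_k\|^2_{\omega^{-1}}\sim(-\log\|w\|)$ for $k<n$, $\|dw_n\|^2_{\omega^{-1}}\sim\|w\|^2(-\log\|w\|)^2$ and $dV_\omega\sim\|w\|^{-2}(-\log\|w\|)^{-(n+1)}dV$, the bundle-valued equation decouples into scalar equations $\bar\partial u_{j_1\cdots j_r,I}=\sum_\ell f_{j_1\cdots j_r,I,\ell}\,d\bar w_\ell$, with the frame factor $(dw_{j_1},dw_{j_1})_{\widetilde\omega}\cdots(dw_{j_r},dw_{j_r})_{\widetilde\omega}$ common to both sides equal, up to the quasi-isometry $\|w\|\sim|w_n|$, to a weight $e^{-\tau}$ with $\tau=-r\log(-\log\|w\|)$ when $j_r\ne n$ and $\tau=-(r+1)\log(-\log\|w\|)-\log|w_n|^2$ when $j_r=n$; both are plurisubharmonic, since $-\log(-\log\|w\|)$ is the strictly plurisubharmonic K\"ahler potential of $\omega_{\Omega_b^{(N)}}$ and $\log|w_n|^2$ is pluriharmonic on $\mathbb{D}^*(\epsilon)$. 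One then applies Lemma~\ref{local L2 estimate} with parameters $\ell=i_n$ and $s=(m+i_n)-(n-1)$, sums the resulting estimates over $I$ and $j_1<\cdots<j_r$, and reassembles to get a bounded local solution, hence surjectivity of $\bar\partial\colon L^{(r,0)}_{2,F}\to\ker\bar\partial_{(r,1),F}$ and the asserted exactness.

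The hypothesis $m\ge n-1$ is used only to ensure $s=(m+i_n)-(n-1)\ge 0$ (the worst case being $i_n=0$), which is the regime in which Lemma~\ref{local L2 estimate} applies. The single genuinely hard input --- already established upstream --- is that lemma, and within it the borderline case $s=0$, where the plain H\"ormander weight does not suffice and one must run the Berndtsson--Charpentier proof of a Donnelly--Fefferman type estimate with the auxiliary plurisubharmonic weight built from $\rho=\log(|w_n|^2+\delta^2)$, as in \cite{Ch11}. Everything downstream of that, the present corollary included, is bookkeeping with quasi-isometric norms.
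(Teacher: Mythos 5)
Your proposal is correct and matches the paper: the corollary is indeed an immediate restriction of the preceding Proposition (which establishes exactness for every $r\ge 0$ when $m\ge n-1$), and the paper accordingly gives it no separate proof. Your reconstruction of the underlying argument --- decoupling into scalar equations via the quasi-isometry $\omega\sim\widetilde\omega$, absorbing the frame factors into the plurisubharmonic weight $\tau$, and invoking Lemma~\ref{local L2 estimate} with $\ell=i_n$ and $s=(m+i_n)-(n-1)$ --- is exactly the paper's proof of that Proposition, including the correct identification of where $m\ge n-1$ is used.
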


\begin{proof}[Proof of Theorem~\ref{main}] 
{ For each fixed $r, m$, let $E:=E_{r,m}$.} By Lemma \ref{exactness for (r,s) 0}, Corollary \ref{exactness 1}, and Proposition \ref{exactness for (r,s) 2} it follows that 
\begin{equation*}
0 \rightarrow \mathcal{O}(E_{r,m}) \xrightarrow { } L^{r,0}_{2, S^m T_{\overline \Sigma}^*} \xrightarrow {\bar \partial} \ker \bar  \partial_{(r,1), S^m T_{\overline \Sigma}^* } \rightarrow 0
\end{equation*}
and
\begin{equation*}
0 \rightarrow \ker \bar\partial_{(r,s), S^m T_{\overline \Sigma}^* }  \xrightarrow { } L^{r,s}_{2,S^m T_{\overline \Sigma}^*} \xrightarrow {\bar \partial} \ker \bar \partial_{(r,s+1), S^m T_{\overline \Sigma}^*} \rightarrow 0
\end{equation*}
are exact for every $0 \leq s \leq n$. It means that the resolution 
\begin{equation*}\label{exactnees 3}
0 \rightarrow \mathcal{O}( E)  \rightarrow L^{0,*}_{2, S^m T_{\overline \Sigma}^* }  
\end{equation*}
is exact. Therefore, by Lemma \ref{sheaf cohomology}, the theorem is proved.
\end{proof}

\section{Application}\label{application}

In this section, we prove a version of $L^2$-holomorphic jet extension theorem for a complex hyperbolic space form. For this,  we first establish some notation. Let $K_{\mathbb{B}^n}$ denote the Bergman kernel of $\mathbb{B}^n$, given by
$$
K_{\mathbb{B}^n} (z,w) = \frac{1}{(1-z \cdot \bar w)^{n+1}}
$$ 
and let its associated K\"ahler form 
$$
G =\frac{1}{n+1} \sqrt{-1} \partial \bar \partial \log K_{\mathbb{B}^n} (z,z).
$$  
We now consider an automorphism of $\mathbb{B}^n$,
$$
T_z (w) = \frac{z - P_z (w) - s_z Q_z(w)}{1- w \cdot \bar z},
$$
where $|z|^2 = z \cdot \bar z$ and $s_z = \sqrt{1-|z|^2}$, $P_z$ is the orthogonal projection from $\mathbb{C}^n$ onto the one-dimensional subspace $[z]$ generated by $z$, and $Q_z$ is the orthogonal projection from $\mathbb{C}^n$ onto $[z]^{\perp}$.  We have $T_z \circ T_z = \text{Id}_{\mathbb{B}^n}$.  Let $A= (A_{jk}) := dT_z (z)$, and define
$$
e_j := \sum_{k=1}^{n} A_{jk} d \bar z_k
$$
Then, $\{e_1, \cdots, e_n\}$ forms an orthonormal frame of $T_{\mathbb{B}^n}^*$.  We denote the Laplacian 
$$
\Box_m^{\ell} : C^{\infty} (\Sigma, S^m T_{\Sigma}^* \otimes \Lambda^{0,\ell} T_{\Sigma}^*) \rightarrow C^{\infty} (\Sigma, S^m T_{\Sigma}^* \otimes \Lambda^{0,\ell} T_{\Sigma}^*)
$$ 
by 
$$
\Box_m^{\ell} = \bar \partial \circ \bar \partial^* + \bar \partial^* \circ \bar \partial
$$
and we define $\mathcal{R}_{G}^m$ by
 \begin{equation}\nonumber
\begin{aligned}
 \mathcal R_{G}^m : C^{\infty} \left(\Sigma, S^{m}  T^{*}_{\Sigma} \right) &\rightarrow C^{\infty} \left( \Sigma, S^{m+1} T^{*}_{\Sigma}  \otimes \Lambda^{0,1}  T_{\Sigma}^{*}  \right)\\
u= \sum_{J} u_{J} e^{J}  &\mapsto \sum_{J,\ell} (u_{J} e^{J} e_\ell ) \otimes \bar  e_\ell
\end{aligned}
\end{equation}
where $e^{J} e_\ell$ is the symmetric product of $e^J$ and $e_\ell$. For the details of $\mathcal{R}_{G}^m$, see Section 3 of \cite{LS23-2}.

Let $h^{S^m T_{\Sigma}^*}$ be the metric on $S^m T_{\Sigma}^*$ induced from $G$. For the simplicity of notation, we denote 
$$
\langle u, v \rangle := \langle u, v \rangle_{h^{S^m T_{\Sigma}^*},G} \quad \text{and} \quad \langle \langle u, v \rangle \rangle := \langle \langle u, v \rangle \rangle_{h^{S^m T_{\Sigma}^*}, G}
$$
for any $u, v \in C^{\infty}_{0,s}(\Sigma, S^m T_{\Sigma}^*) \cap L^{0,s}_{2} (\Sigma, S^m T_{\Sigma}^*),~~ 0 \leq s \leq n$. 

\begin{lemma}\label{adjoint of RG}
Let $\mathcal{R}_{G,m}^{*}$ be the {adjoint of} $\mathcal{R}_{G}^{m}$ satisfying
$$
\langle \langle \mathcal{R}_{G,m}^* u_0, u_1  \rangle \rangle_{} = \langle \langle u_0, \mathcal{R}_{G}^m u_1 \rangle \rangle_{}, \quad \forall u_\ell \in {C_{0,\ell}^{\infty} } (\Sigma, S^m T_{\Sigma}^*) \cap L^{0,\ell}_2 (\Sigma, S^m T_{\Sigma}^*), \; \ell=0,1.
$$
Then, for $u=\sum_{|I|=m} u_{I} e^{I}$ and $v= \sum_{|J|=m+1, \ell=1}^{n} v_{J \ell} e^{J} \otimes \bar e_{\ell}$, we have
$$
\mathcal{R}^*_{G,m} v = \sum v_{J \ell } \mu_{\ell} (e^J)
$$
where $\mu_{\ell}(e^J) := e_1^{j_1} \cdots e_{\ell}^{j_{\ell}-1} \cdots e_n^{j_n}$.
In particular, 
$$
\mathcal{R}_{G,m}^* : L^{0,1}_2 (\Sigma, S^m T_{\Sigma}^*) \cap {C^{\infty}_{0,1}}  (\Sigma, S^m T_{\Sigma}^*) \rightarrow L^{0,0}_2 (\Sigma, S^m T_{\Sigma}^*) \cap {  C^{\infty}} (\Sigma, S^m T_{\Sigma}^*).
$$
\end{lemma}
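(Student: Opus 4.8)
The proof is a pointwise (fiberwise) linear-algebra computation, so the plan is first to reduce to it. Since $\mathcal R_G^m$ is a zeroth-order operator --- a $C^\infty(\Sigma)$-linear homomorphism of the Hermitian bundles $S^m T_\Sigma^*$ and $S^{m+1}T_\Sigma^*\otimes\Lambda^{0,1}T_\Sigma^*$ induced by $G$ --- its formal $L^2$-adjoint is nothing but its fiberwise Hermitian adjoint, and the $L^2$-adjunction $\langle\langle \mathcal R_{G,m}^*v,u\rangle\rangle=\langle\langle v,\mathcal R_G^m u\rangle\rangle$ then follows by integrating the pointwise identity $\langle \mathcal R_{G,m}^*v,u\rangle_q=\langle v,\mathcal R_G^m u\rangle_q$ against $dV_G$ (interchanging the finite sums over $I,J,\ell$ with the integral). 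Hence it suffices to verify, at a fixed point $q\in\Sigma$, that the operator $v=\sum_{J,\ell}v_{J\ell}\,e^J\otimes\bar e_\ell\longmapsto \sum_{J,\ell}v_{J\ell}\,\mu_\ell(e^J)$ is the fiberwise adjoint of $\mathcal R_G^m$.

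For the pointwise check I would use the orthonormal frame $\{e_\ell\}$ of $T_\Sigma^*$ at $q$ coming from the automorphism $T_z$, so that $\{\bar e_\ell\}$ is orthonormal for $\Lambda^{0,1}T_\Sigma^*$ and $\{e^I\}_{|I|=m}$, $\{e^K\}_{|K|=m+1}$ are the induced frames of $S^m T_\Sigma^*$ and $S^{m+1}T_\Sigma^*$. Writing $u=\sum_I u_I e^I$ and $\mathcal R_G^m u=\sum_{I,k}(u_I\,e^I e_k)\otimes\bar e_k=\sum_k(u\cdot e_k)\otimes\bar e_k$, the orthonormality $\langle\bar e_k,\bar e_\ell\rangle=\delta_{k\ell}$ collapses $\langle v,\mathcal R_G^m u\rangle_q$ to $\sum_\ell\langle v_\ell,\ u\cdot e_\ell\rangle_{S^{m+1}}$, where $v_\ell:=\sum_J v_{J\ell}e^J$ and $u\cdot e_\ell:=\sum_I u_I\,e^I e_\ell$ is symmetric multiplication by $e_\ell$. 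The decisive input, which is precisely the computation of the metric on symmetric powers recorded in Section~3 of \cite{LS23-2}, is that with respect to the metric induced by $G$ the fiberwise Hermitian adjoint of symmetric multiplication $(\,\cdot\,)\,e_\ell\colon S^m T_{\Sigma,q}^*\to S^{m+1}T_{\Sigma,q}^*$ is exactly the operator $\mu_\ell$; i.e. $\langle a\,e_\ell,\ b\rangle_{S^{m+1}}=\langle a,\ \mu_\ell(b)\rangle_{S^m}$ for all $a\in S^m T_{\Sigma,q}^*$ and $b\in S^{m+1}T_{\Sigma,q}^*$ (with the convention $\mu_\ell(e^J)=0$ when $j_\ell=0$). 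Granting this, $\sum_\ell\langle v_\ell,u\cdot e_\ell\rangle_{S^{m+1}}=\sum_\ell\langle \mu_\ell(v_\ell),u\rangle_{S^m}=\big\langle \sum_{J,\ell}v_{J\ell}\,\mu_\ell(e^J),\ u\big\rangle_{S^m}$, which is the asserted identity.

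Finally, the ``in particular'' statement requires no extra work: the explicit formula exhibits $\mathcal R_{G,m}^*$ as a finite sum of the fixed smooth bundle morphisms $\mu_\ell$ composed with extraction of the frame-coefficients $v_{J\ell}$, hence as a bounded morphism of smooth Hermitian bundles over $\Sigma$; it therefore carries smooth sections to smooth sections and $L^2$ sections to $L^2$ sections. The step I expect to be the main obstacle is establishing the metric identity that $\mu_\ell$ is the fiberwise adjoint of symmetric multiplication by $e_\ell$: this rests on the precise normalizations of the $G$-induced metric on $S^m T_\Sigma^*$ and of the symmetric product $e^J e_\ell$ adopted in \cite{LS23-2}, and one must check that these conventions are chosen so that no combinatorial multiplicity factor survives in $\mu_\ell$; once that is pinned down, the rest is routine index bookkeeping.
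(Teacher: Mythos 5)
Your proposal is correct and follows essentially the same route as the paper: both reduce the $L^2$-adjunction to a pointwise computation in the orthonormal frame $\{e_\ell\}$ and verify that $\langle \mathcal R_G^m u,v\rangle_q=\langle u,\sum v_{J\ell}\mu_\ell(e^J)\rangle_q$, the paper by writing out the index bookkeeping $\sum_{\ell}\sum_{|I|=m}u_I v_{i_1\cdots(i_\ell+1)\cdots i_n,\ell}$ on both sides, you by packaging the same fact as ``$\mu_\ell$ is the fiberwise adjoint of symmetric multiplication by $e_\ell$.'' The normalization issue you flag (that the monomials $e^J$ must be orthonormal for the induced metric so that no combinatorial factor appears) is exactly the convention the paper uses implicitly from \cite{LS23-2}, so nothing further is needed.
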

\begin{proof}
We consider 
\begin{equation*}
\begin{aligned}
\langle \mathcal{R}_{G}^m u, v \rangle 
&= \bigg \langle \sum_{|I|=m} \sum_{k=1}^{n} u_I e^I e_k \otimes \bar e_k , \sum_{|J|=m+1} \sum_{\ell=1}^{n} v_{J\ell} e^{J} \otimes \bar e_{\ell} \bigg \rangle \\&= \sum_{\ell=1}^{n}  \bigg \langle \sum_{|I|=m} u_I e^I e_\ell , \sum_{|J|=m+1} v_{J\ell} e^{J} \bigg \rangle \\
&= \sum_{\ell=1}^{n} \sum_{|I|=m}  u_{I} v_{i_1 \cdots (i_\ell + 1) \cdots i_n, \ell}.
\end{aligned}
\end{equation*}
Since
\begin{equation*}
\begin{aligned}
\bigg \langle \sum_{|I|=m}^{} u_I e^I, \sum_{|J|=m+1} \sum_{\ell=1}^{n} v_{J\ell} \mu_{\ell} (e^{J})  \bigg \rangle &= \bigg \langle \sum_{|I|=m}^{} u_I e^I, \sum_{|J|=m+1} \sum_{\ell=1}^{n} v_{J\ell} e_1^{j_1}\cdots e_\ell^{j_\ell -1} \cdots e_n^{i_n} \bigg \rangle \\
&= \sum_{\ell=1}^{n} \sum_{|I|=m} u_{I} v_{i_1 \cdots (i_\ell +1) \cdots i_n, \ell},
\end{aligned}
\end{equation*}
we obtain
$$
\langle \langle \mathcal{R}_{G}^m u, v \rangle \rangle = \langle \langle u, \sum v_{J\ell} \mu_{\ell}(e^J) \rangle \rangle.
$$ 
Hence the lemma is proved.
\end{proof}

\begin{lemma}\label{property of RG}
Let $\ker^{\perp}(\Box_{m}^{\ell} - \lambda I), \ell=0,1$ be the orthogonal complement in $L^{0,\ell}_{2}(\Sigma, S^m T_{\Sigma}^*)$. Then 
\begin{enumerate}
\item $\mathcal{R}_{G}^{m} (\ker (\Box_{m}^{0} -\lambda I) ) \subset \ker (\Box_{m+1}^{1} - (\lambda + 2m)I) $
\item $\mathcal{R}_{G}^{m} ( \ker^{\perp} (\Box_{m}^{0} - \lambda I) \cap C^{\infty} (\Sigma, S^m T_{\Sigma}^*) ) \subset \ker^{\perp} (\Box_{m+1}^{1} - (\lambda + 2m) I)$. 
\end{enumerate}
\end{lemma}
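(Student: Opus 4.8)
The statement is an equivariance property of the operator $\mathcal R_G^m$ with respect to the Laplacians $\Box_m^0$ and $\Box_{m+1}^1$, shifted by $2m$. The natural strategy is to establish the \emph{intertwining relation}
\begin{equation*}
\Box_{m+1}^1 \circ \mathcal R_G^m = \mathcal R_G^m \circ (\Box_m^0 + 2m\, I)
\end{equation*}
on smooth sections, and then deduce both (1) and (2) as formal consequences. For (1): if $u \in \ker(\Box_m^0 - \lambda I)$, then $\Box_{m+1}^1(\mathcal R_G^m u) = \mathcal R_G^m(\Box_m^0 u + 2m u) = \mathcal R_G^m((\lambda + 2m)u) = (\lambda+2m)\mathcal R_G^m u$, so $\mathcal R_G^m u \in \ker(\Box_{m+1}^1 - (\lambda+2m)I)$ — though one should note eigensections of $\Box_m^0$ on a complete manifold need not be smooth \emph{a priori}, so either one restricts to the smooth part or invokes elliptic regularity of $\Box_m^0$ to know they are. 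For (2): I would use the adjoint $\mathcal R_{G,m}^*$ computed in Lemma~\ref{adjoint of RG} together with the adjoint intertwining relation $\mathcal R_{G,m}^* \circ \Box_{m+1}^1 = (\Box_m^0 + 2m\,I)\circ \mathcal R_{G,m}^*$ (obtained by taking Hilbert-space adjoints of the relation above, since $\Box$ is self-adjoint). Then for $u$ in the smooth part of $\ker^\perp(\Box_m^0 - \lambda I)$ and any $w \in \ker(\Box_{m+1}^1 - (\lambda+2m)I)$, one computes $\langle\langle \mathcal R_G^m u, w\rangle\rangle = \langle\langle u, \mathcal R_{G,m}^* w\rangle\rangle$ and shows $\mathcal R_{G,m}^* w \in \ker(\Box_m^0 - \lambda I)$, hence the pairing vanishes; this gives $\mathcal R_G^m u \perp \ker(\Box_{m+1}^1 - (\lambda+2m)I)$, which is the claim.

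\textbf{Proving the intertwining relation.} The heart of the matter is the commutation identity $\Box_{m+1}^1 \circ \mathcal R_G^m = \mathcal R_G^m \circ (\Box_m^0 + 2m I)$. Here I would work in the orthonormal coframe $\{e_1,\dots,e_n\}$ of $T^*_{\mathbb B^n}$ adapted to the point via $T_z$, so that at the center the frame is $\bar\partial$-parallel to first order, reducing the computation of $\bar\partial$, $\bar\partial^*$, and hence $\Box$ on $S^\bullet T_\Sigma^*$-valued forms to an essentially algebraic manipulation plus curvature terms. The operator $\mathcal R_G^m$ is symmetric multiplication by a unit-length frame tensored with the conjugate frame; the shift $2m$ will emerge from commuting $\bar\partial$ (and its adjoint) past this symmetric multiplication, picking up the curvature of $(T_\Sigma^*, G)$ — recall $\mathbb B^n$ is a space form, so this curvature is a universal constant multiple of the metric, and the constant $m$ counts the $m$ symmetric factors on which the curvature acts. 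This is exactly the kind of Weitzenböck-type bookkeeping carried out in \cite{LS23-2} in the compact case; since the present $\Sigma$ is locally isometric to $\mathbb B^n$, the \emph{pointwise} identity is identical, so I would cite or transcribe that local computation.

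\textbf{Main obstacle.} The genuine technical subtlety is not the local identity — that is a routine (if lengthy) curvature computation borrowed from \cite{LS23-2} — but rather making the Hilbert-space adjoint argument for (2) rigorous on the \emph{non-compact, finite-volume} $\Sigma$. On a compact manifold one freely takes adjoints of $\Box$ and uses the spectral decomposition into eigenspaces; here one must be careful that $\Box_m^\ell$ are densely-defined self-adjoint operators (which follows from completeness of $G$, cf. the Kashihara–Kawai framework in Theorem~\ref{KK}), that $\mathcal R_G^m$ and $\mathcal R_{G,m}^*$ map smooth $L^2$ sections to smooth $L^2$ sections (the pointwise norm of $\mathcal R_G^m u$ equals that of $u$ up to the combinatorial factor from the symmetric product, which is bounded, so $L^2$-boundedness is immediate; smoothness is elliptic regularity), and that the formal adjoint relation between $\mathcal R_G^m$ and $\mathcal R_{G,m}^*$ established in Lemma~\ref{adjoint of RG} on compactly-supported-type sections extends to the relevant domains. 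I would handle this by restricting throughout to smooth sections that are $L^2$ together with finitely many $\Box$-powers, where all integrations by parts are justified by the finite volume of $\Sigma$ and the boundedness of the relevant zeroth-order coefficients near the cusps, and only at the end pass to the closures. The shift constant $2m$ itself carries no difficulty once the local Weitzenböck identity is in hand.
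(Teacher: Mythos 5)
Your proposal matches the paper's proof in both structure and substance: assertion (1) follows directly from the intertwining identity $\Box_{m+1}^1\circ\mathcal R_G^m=\mathcal R_G^m\circ(\Box_m^0+2m\,I)$ (which the paper simply cites as Proposition 3.6 of \cite{LS23-2} rather than rederiving), and assertion (2) is proved exactly as you describe, by showing via self-adjointness of $\Box$ and the same identity that $\mathcal R_{G,m}^*f\in\ker(\Box_m^0-\lambda I)$ for $f\in\ker(\Box_{m+1}^1-(\lambda+2m)I)$ and then pairing $\langle\langle \mathcal R_G^m v, f\rangle\rangle=\langle\langle v,\mathcal R_{G,m}^*f\rangle\rangle=0$. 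The only difference is that you devote more attention to the domain and regularity issues on the non-compact $\Sigma$, which the paper passes over quietly; this is a point in your favor rather than a divergence of method.
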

\begin{proof}
The first assertion follows directly. To prove the second assertion, it suffices to prove 
$$
\mathcal{R}_{G}^{m} (\ker^{\perp} (\Box_{m}^{0} - \lambda I) \cap C^{\infty} (\Sigma, S^m T_{\Sigma}^*) ) \perp \ker(\Box_{m+1}^{1} - (\lambda + 2m) I).
$$
Take any $f  \in  \ker(\Box_{m+1}^{1} - (\lambda + 2m) I)$.

Let $\mathcal{R}_{G,m}^{*}$ be the adjoint of $\mathcal{R}_G^m$ in Lemma \ref{adjoint of RG}. 
By the self-adjointness of $\Box$ and Proposition 3.6 in \cite{LS23-2}, 
\begin{equation*}
\begin{aligned}
\langle \langle \Box^{0}_m \mathcal{R}_{G,m}^* f, u \rangle \rangle 
&= \langle \langle \mathcal{R}_{G,m}^* f, \Box^{0}_m u \rangle \rangle = \langle \langle f, \mathcal{R}_{G}^{m} \Box^0_m u \rangle \rangle \\
&=  \langle \langle  f, \Box^1_{m+1} \mathcal{R}_{G}^m u - 2m \mathcal{R}_{G}^m u \rangle \rangle \\
&= \langle \langle f , \Box^1_{m+1} \mathcal{R}_{G}^m u \rangle \rangle -2m \langle \langle  f , \mathcal{R}_{G}^m u \rangle \rangle \\
& = \langle \langle \Box^{1}_{m+1} f, \mathcal{R}_{G}^m u \rangle \rangle - 2m \langle \langle f, \mathcal{R}_{G}^m u \rangle \rangle \\
&= \langle \langle \lambda f, \mathcal{R}_{G}^{m} u \rangle \rangle = \langle \langle \lambda \mathcal{R}_{G,m}^* f, u \rangle \rangle
\end{aligned}
\end{equation*}
for every $u \in C^{\infty} (\Sigma, S^m T_{\Sigma}^*) \cap L^{0,0}_2 (\Sigma, S^m T_{\Sigma}^*)$. Hence
$
\Box^0_m \mathcal{R}_{G,m}^* f = \lambda \mathcal{R}_{G,m}^* f
$
and $\mathcal{R}_{G,m}^* f \in \ker (\Box_m^{0} - \lambda I)$. 
Thus,
$$
\langle \langle f, \mathcal{R}_{G}^m v \rangle \rangle = \langle \langle \mathcal{R}_{G,m}^* f, v \rangle \rangle = 0
$$
for every $v \in \ker^{\perp}(\Box_{m}^{0} - \lambda I)\cap { C^{\infty} }(\Sigma, S^m T_{\Sigma}^*)$. Therefore, the proof is completed.
\end{proof}
\begin{remark}
For a compact compact ball quotient, analogous properties given in Lemma~\ref{property of RG} are presented in \cite[Corollary 3.7]{LS23-2} and \cite[Corollary 2.3]{LS23-1}.
In these cases, the authors used the compactness of the Green operator to show (2) in Lemma~\ref{property of RG}, however for non-compact $\Sigma$ we have no idea whether it is compact.
\end{remark}

\begin{proof}[Proof of Theorem \ref{extension}]
Since the rest of the proof is similar to that of the case when $\Sigma$ is compact given in \cite{LS23-2}, we only present a sketch of the proof.  

The key ingredient of the proof is to construct a power series $\Phi(\psi) \in A^2_{\alpha}(\Omega)$ for a given $\psi \in H^{0,0}_{L^2, \bar \partial} (\Sigma, S^N T_{\Sigma}^*) $. For this, we note that any $f \in \mathcal{O}(\Omega)$ can be regarded as $f \in \mathcal{O}(\mathbb{B}^n \times \mathbb{B}^n)$ which is invariant under the diagonal action of $\Gamma$. By letting $t := T_z w$, we obtain a smooth function $\widetilde f (z,t) := f(z, T_z t) = f(z,w)$ which is holomorphic in $t$ but not in $z$. Since $w = T_z t$, from the Taylor expansion of $\widetilde f$ at $(z,0)$,  we have
\begin{equation}\nonumber
f(z,w) = \sum_{|I|=0}^{\infty} f_I (z) (T_z w)^I. 
\end{equation}
By \cite[Proposition 4.9]{LS23-2}, we obtain the \textit{associated differential} $\varphi$ of $f$ which is defined by
$$
 \varphi := \sum_{k=0}^{\infty} \varphi_k, \quad \text{ where }
\quad \varphi_{k} := \sum_{|I|=k} \varphi_I, \quad \varphi_I := f_I (z) e^{I}$$
and $\varphi_k$, $k \in \mathbb{N}$ satisfy
\begin{equation}\label{a recursive formula}
\bar \partial \varphi_ k = -(k-1) \mathcal{R}_{G} (\varphi_{k-1}) \; \;\text{on} \; \, \Sigma.
\end{equation}

Now, in a reverse way, we construct $\Phi(\psi)$ by using \eqref{a recursive formula} from a symmetric differential $\psi\in H^{0,0}_{L^2, \bar \partial} (\Sigma, S^N T_{\Sigma}^*)$. If $N=0$, by identifying $S^0 T_{\Sigma}^* \cong \Sigma \times \mathbb{C}$, we consider $\psi$ as an $L^2$-holomorphic function on $\Sigma$. Then, we define $\Phi(\psi) (z,w) := \widetilde \psi (z)$ where $\widetilde \psi$ is the lifting of $\psi$ by the quotient map $\mathbb{B}^n \rightarrow \Sigma$.

If $N \geq 1$,  then for a given $\psi \in H^{0,0}_{L^2, \bar \partial} (\Sigma, S^N T_{\Sigma}^*)$, we define inductively 
$ \{\varphi_k \}$
by
\begin{equation*}\label{system}
\left\{ \begin{array}{ll}
\varphi_{k}=0 & \text{if $k<N$}, \\
\varphi_{N}=\psi ,&
\end{array} \right.
\end{equation*}
and for $s \geq 1$,  $\varphi_{N+s}$ is the minimal solution of
the following $\overline \partial$-equation:
\begin{equation}\nonumber
\bar \partial \varphi_{N+s} = - (N+s -1) \mathcal R_G\left( \varphi_{N+s-1}  \right).
\end{equation}
By following the proof of \cite[Lemma 4.12]{LS23-2} and using the existence of the Green operator given in Theorem~\ref{Hodge decomposition} and (1) in Lemma \ref{property of RG}, we obtain 
\begin{equation}\label{solsuff}
\| \varphi_{N+s} \|^2 =  \bigg( \prod_{j=1}^{s} \left(1+ \frac{n-1}{N+j} \right) \bigg) \left( \frac{(2N-1)!}{ \{ (N-1)!  \}^2}
\frac{ \{ (N+s-1)! \}^2} { (2N+s-1)!}\frac{1}{s! } \right) \| \psi \|^2
\end{equation}
for any $s \geq 1$.
Let us express
\begin{equation*}
\varphi_m := \sum_{|I|=m} f_I(z) e^I
\end{equation*}
and we define a formal sum 
\begin{equation}\label{formal power series}
f(z,w) := \sum_{|I|=0}^{\infty} f_I(z) (T_z w)^{I}
\end{equation}
on $\Omega$.  Then, by using \eqref{solsuff} and following the proofs of \cite[Lemma 4.13, 4.14, 4.15, and 4.16]{LS23-2}, it follows that $f$ is a holomorphic function on $\Omega$. As a result, when $N \geq 1$, for a given $\psi \in H^{0,0}_{L^2, \bar \partial} (\Sigma,  S^N T_{\Sigma}^*)$, by using 
\eqref{formal power series}, 
we define
$
\Phi (\psi) := f
$
 and extend $\Phi$ linearly on $\bigoplus_{m=0}^{\infty} H^{0,0}_{L^2, \bar \partial} (\Sigma, S^m T_{\Sigma}^*)$. Then, $\Phi$ is a linear map. For the injectivity, see the proof of \cite[Lemma 4.17]{LS23-2}, and {the density property of $\Phi$ follows from the proof of Lemma 4.18 with (2) in Lemma \ref{property of RG}.}
\end{proof}

\begin{proof}[Proof of Corollary~\ref{no nonconstant holo}]
   Let $\Gamma'\subset \Gamma$ be a sublattice  of finite index such that $\Gamma'$ has only unipotent parabolic automorphisms. Let $\Omega'$ be the quotient of $\mathbb B^n\times\mathbb B^n$ by the diagonal action of $\Gamma'$. By the same argument given in the proofs of Corollary~4.19 and Theorem~4.20 in \cite{LS23-2}, we have $A^2_{-1}(\Omega')\cong \mathbb C$ and there exists no bounded holomorphic function on $\Omega'$. This implies that $\Omega$ also has the same properties.
\end{proof}

By a similar argument used in the proof of Theorem~1.5, it is possible to generalize Theorem~1.1 in \cite{LS23-1}:
Let $\widetilde{M}$ be a complex manifold, $\Gamma$ be a torsion-free lattice of $\text{Aut}(\widetilde{M})$ and $\rho\colon\Gamma\to SU(N,1)$ be a representation. Suppose that there exists a $\rho$-equivariant totally geodesic isometric holomorphic embedding $\imath\colon \widetilde M\to\mathbb B^N$. Let $\Omega_{\rho}:= M \times_{\rho} \mathbb{B}^N$ be a holomorphic $\mathbb{B}^N$-fiber bundle over $M= \widetilde M / \Gamma$, where any $\gamma \in \Gamma$ acts on $\widetilde M \times \mathbb{B}^N$ by  $(\zeta,w)\mapsto (\gamma \zeta, \rho(\gamma) w)$. We define a K\"ahler form $\omega$ on $\Omega_{\rho}$ by
\begin{equation}\nonumber
\omega|_{[\zeta, w]} = {\widetilde H}  +  \frac{\sqrt{-1}}{N+1} \partial \bar \partial \log K(w,w)
\end{equation}
with the K\"ahler form {$\widetilde H$ for $(\widetilde M, \imath^* g_{\mathbb{B}^N})$}, {where $\imath^* g_{\mathbb{B}^N}$ is the pull-back metric on $\widetilde M$ of the normalized Bergman metric $g_{\mathbb{B}^N}$ of $\mathbb{B}^N$.}
One can check that $\omega$ is an $(1,1)$ form on $\Omega_{\rho}$. We define the volume form on $\Omega_{\rho}$ by {$dV_{\omega} = \frac{1 }{(N+n)!}\omega^{N+n} $.} For measurable sections $f_1$, $f_2$ on $\Lambda^{r,s} T_{\Omega_{\rho}}^{*}$ and $\alpha > -1$,
we set
\begin{equation}\nonumber
\langle \langle f_1,f_2 \rangle \rangle_{\alpha} : = c_{\alpha} \int_{\Omega_{\rho}} \langle f_1, f_2 \rangle_{\omega} \delta^{\alpha+N+1} \,dV_\omega
\end{equation}
where $c_{\alpha} = \frac{\Gamma(N+\alpha+1)}{ \Gamma(\alpha+1) N! }$ and $\delta = 1-|T_{\imath (\zeta)} w|^2$.

For $\alpha > -1$, we define a weighted $L^2$-space by setting
\begin{equation}\nonumber
L^2_{(r,s), \alpha}(\Omega_{\rho}) := \{ f : f \text{ is a measurable section on $\Lambda^{r,s} T_{\Omega_{\rho}}^*$}, ~\|f \|^2_{\alpha} := \left< f,f\right>_{\alpha} < \infty \}
\end{equation}
and a weighted Bergman space by
$A^2_{\alpha}(\Omega_{\rho}) := L^2_{(0,0), \alpha}(\Omega_{\rho}) \cap \mathcal O(\Omega_{\rho}).$ In this setting, we extend $\bar \partial$-operator on $\Omega_{\rho}$ as the maximal extension of $\bar \partial$ on $\Omega_{\rho}$ which acts on smooth $(r,s)$ forms on $\Omega_{\rho}$. The \textit{Hardy space} $A^2_{-1}(\Omega_{\rho})$ is defined by
$$
A^2_{-1}(\Omega_{\rho}) := \{  f \in \mathcal{O}(\Omega_{\rho}) : \|f \|^2_{-1} :=  \lim_{\alpha \searrow -1} \| f \|^2_{\alpha} < \infty  \}.
$$

\begin{theorem}
Let $\widetilde{M}$ be a complex manifold, $\Gamma$ be a torsion-free lattice of $\text{Aut}(\widetilde{M})$ and $\rho\colon\Gamma\to SU(N,1)$ be a representation such that $\rho(\Gamma)$ has only unipotent parabolic elements.
Suppose that there exists a $\rho$-equivariant totally geodesic isometric holomorphic embedding $\imath\colon \widetilde M\to\mathbb B^N$ and the volume of $M := \widetilde M / \Gamma$ is finite for the induced metric from $\widetilde M$. Let $\Sigma_{\rho}:=\mathbb B^N/\rho(\Gamma)$ and $\Omega_{\rho}:=M\times_\rho \mathbb B^N$ be a holomorphic $\mathbb B^N$-fiber bundle over $M$ where any $\gamma\in \Gamma$ acts on $\widetilde M\times \mathbb B^N$ by $(\zeta,w)\mapsto (\gamma \zeta, \rho(\gamma) w)$. Then there exists an injective linear map
\begin{equation}\nonumber
\Phi: \bigoplus_{m=0}^{\infty} H^0 (M, \imath^* (S^m T_{\Sigma_{\rho}}^*)) \rightarrow
\begin{cases}
\displaystyle\bigcap_{\alpha>-1} A^2_\alpha (\Omega_{\rho}) \subset \mathcal{O} (\Omega_{\rho}) & \text{ if } n=N,\\
\displaystyle\bigcap_{\alpha\geq -1} A^2_\alpha (\Omega_{\rho}) \subset \mathcal{O} (\Omega_{\rho})  & \text{ if } n< N,\\
\end{cases}
\end{equation}
which has a dense image in $\mathcal{O}(\Omega_{\rho})$ equipped with the compact open topology. In particular, $\dim A^2_{\alpha} (\Omega_{\rho}) = \infty$ if $\alpha> -1$ and $A_{-1}^2 (\Omega_{\rho}) = \bigcap_{\alpha \geq -1} A_\alpha^2(\Omega_{\rho})$ with $\dim A^2_{-1} (\Omega_{\rho}) = \infty$ if $n < N$.
\end{theorem}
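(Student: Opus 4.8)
The plan is to run the proof of Theorem~\ref{extension} essentially verbatim, with $S^{m}T^{*}_{\Sigma}$ replaced throughout by the pulled-back bundle $\imath^{*}(S^{m}T^{*}_{\Sigma_{\rho}})$ over $M$, and to import from \cite{LS23-1} the refined estimates in the normal directions that separate the cases $n=N$ and $n<N$. First I would reduce to a ball quotient: since $\imath$ is an isometric totally geodesic holomorphic embedding and $(\widetilde M,\imath^{*}g_{\mathbb B^{N}})$ is complete (being a finite-volume quotient), $\imath(\widetilde M)$ is a closed totally geodesic complex submanifold of $\mathbb B^{N}$, hence a linear sub-ball, so $\widetilde M$ is isometrically biholomorphic to $\mathbb B^{n}$ and $M=\mathbb B^{n}/\Gamma$ is a finite-volume ball quotient. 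By $\rho$-equivariance, the unipotency of $\rho(\Gamma)$ forces every parabolic element of $\Gamma$ to be unipotent on $\mathbb B^{n}$, so $M$ admits a smooth toroidal compactification $\overline M$. Writing $\imath^{*}T^{*}_{\mathbb B^{N}}=T^{*}_{M}\oplus\mathcal N^{*}$ with $\mathcal N$ the holomorphic normal bundle — whose induced metric has negative curvature by total geodesy, so $\mathcal N^{*}$ is Nakano positive — one gets the splitting $\imath^{*}(S^{m}T^{*}_{\Sigma_{\rho}})=\bigoplus_{j=0}^{m}S^{j}T^{*}_{M}\otimes S^{m-j}\mathcal N^{*}$, which is Nakano positive for $m\geq 1$, is $\cong\mathcal O_{M}$ for $m=0$, and extends holomorphically over $\overline M$ because the cusp holonomy of $\mathcal N^{*}$ is unipotent.

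Next I would re-establish the analogues of Theorem~\ref{main} and Theorem~\ref{Hodge decomposition} for $F:=\imath^{*}(S^{m}T^{*}_{\Sigma_{\rho}})$ with the metric induced from $\imath^{*}g_{\mathbb B^{N}}$. The arguments of Sections~\ref{metric description}--\ref{proof of main theorem} use only two inputs: the local quasi-isometry of the Bergman-induced metric on $M$ to $\widetilde\omega$ near the cusps (Lemma~\ref{quasi-isometric}), which holds verbatim, and Nakano semi-positivity of $\Lambda^{(0,0)}T^{*}_{M}\otimes F=F$ (used through Corollary~\ref{basic estimate by Kahler potential 2} and Proposition~\ref{exactness for (r,s) 2} with $r=0$), which holds by the previous paragraph. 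Re-running Lemma~\ref{exactness for (r,s) 0} for $F$ then yields a holomorphic vector bundle $\widetilde E_{m}$ over $\overline M$ with $H^{0,s}_{L^{2},\bar\partial}(M,\imath^{*}(S^{m}T^{*}_{\Sigma_{\rho}}))\cong H^{s}(\overline M,\widetilde E_{m})$, in particular finite dimensional; with Theorem~\ref{KK} this produces the $L^{2}$-Hodge decomposition and a bounded Green operator $G$ for $\imath^{*}(S^{m}T^{*}_{\Sigma_{\rho}})$-valued $(0,s)$-forms on $M$, exactly as in Theorem~\ref{Hodge decomposition}. Finiteness of the volume of $M$ guarantees the holomorphic sections appearing below, which extend over $\overline M$, are genuinely $L^{2}$.

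With $G$ available the rest follows the proof of Theorem~\ref{extension}. Define $\mathcal R$ and its adjoint as in Lemmas~\ref{adjoint of RG}--\ref{property of RG}, using the automorphism $T_{\imath(\zeta)}$ of $\mathbb B^{N}$ to build the orthonormal frame $\{e_{1},\dots,e_{N}\}$ of $\imath^{*}T^{*}_{\Sigma_{\rho}}$; the analogue of Lemma~\ref{property of RG} follows from the same self-adjointness computation, now with $G$ in place of compactness. For $\psi\in H^{0}(M,\imath^{*}(S^{N}T^{*}_{\Sigma_{\rho}}))$ put $\varphi_{k}=0$ for $k<N$, $\varphi_{N}=\psi$, and let $\varphi_{N+s}$ be the minimal solution of $\bar\partial\varphi_{N+s}=-(N+s-1)\mathcal R(\varphi_{N+s-1})$; the argument of \cite[Lemma~4.12]{LS23-2} gives the same closed-form bound on $\|\varphi_{N+s}\|^{2}$ as \eqref{solsuff}, and with $\varphi_{m}=\sum_{|I|=m}f_{I}(z)e^{I}$ and $f(z,w):=\sum_{|I|=0}^{\infty}f_{I}(z)(T_{\imath(\zeta)}w)^{I}$ the estimates of \cite[Lemmas~4.13--4.16]{LS23-2} show $f\in\mathcal O(\Omega_{\rho})$ with $\|f\|_{\alpha}^{2}<\infty$ for every $\alpha>-1$. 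When $n<N$ the monomials along the normal directions carry extra powers of $\delta=1-|T_{\imath(\zeta)}w|^{2}$ coming from the cusp decay of the $\mathcal N^{*}$-metric, and this is where the refinement of \cite{LS23-1} upgrades the bound to $\|f\|_{-1}^{2}<\infty$, so the image lies in $\bigcap_{\alpha\geq -1}A^{2}_{\alpha}(\Omega_{\rho})$. Setting $\Phi(\psi):=f$ and extending linearly, injectivity holds because $\psi$ is the leading homogeneous part of the fibrewise Taylor expansion of $\Phi(\psi)$ (cf.\ \cite[Lemma~4.17]{LS23-2}), and density in $\mathcal O(\Omega_{\rho})$ for the compact-open topology follows from the proof of \cite[Lemma~4.18]{LS23-2} together with part~(2) of the analogue of Lemma~\ref{property of RG}. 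Finally $\bigoplus_{m}H^{0}(M,\imath^{*}(S^{m}T^{*}_{\Sigma_{\rho}}))$ is infinite dimensional (it contains $\bigoplus_{m}H^{0}(\overline M,S^{m}\overline{\mathcal N}^{*})$ when $n<N$, and is infinite dimensional for $n=N$ as in \cite{LS23-1}), so injectivity of $\Phi$ forces $\dim A^{2}_{\alpha}(\Omega_{\rho})=\infty$ for $\alpha>-1$ and, for $n<N$, $\dim A^{2}_{-1}(\Omega_{\rho})=\infty$ and $A^{2}_{-1}(\Omega_{\rho})=\bigcap_{\alpha\geq -1}A^{2}_{\alpha}(\Omega_{\rho})$ by the argument of \cite{LS23-1}.

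The main obstacle is the second step above: establishing the finite-dimensionality isomorphism and the Green operator for the \emph{twisted} bundle $\imath^{*}(S^{m}T^{*}_{\Sigma_{\rho}})$ over the non-compact, finite-volume $M$ — that is, constructing the holomorphic model $\widetilde E_{m}$ on $\overline M$ and verifying the local $L^{2}$ Dolbeault--Grothendieck lemma near the cusps for it. Everything else is a formal transcription of \cite{LS23-2} and \cite{LS23-1}, but this is the step where the finite-volume hypothesis and the cusp geometry genuinely enter; once one records the Nakano positivity supplied by total geodesy, the analysis is the same as in Sections~\ref{metric description}--\ref{proof of main theorem}.
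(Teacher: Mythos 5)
Your proposal follows exactly the route the paper intends: the paper offers no proof of this theorem beyond the single remark that it follows ``by a similar argument used in the proof of Theorem~\ref{extension}'' generalizing Theorem~1.1 of \cite{LS23-1}, and your sketch is a faithful, more detailed transcription of that strategy (reduce to a sub-ball via total geodesy, rebuild the $L^2$-Dolbeault isomorphism and Green operator for the pulled-back bundle, then run the recursive $\bar\partial$-construction of $\Phi$). One small caution: the improvement to the Hardy space when $n<N$ comes from the growth rate of the recursive norm bound \eqref{solsuff} (the factor $\prod_j\bigl(1+\tfrac{n-1}{N+j}\bigr)$ against the factorial ratio, which is summable precisely when $n<N$) rather than from ``extra powers of $\delta$'' in the normal directions, but since you defer to the estimates of \cite{LS23-1} for that step this does not affect the argument.
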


\end{document}